\documentclass{amsart}
\usepackage[english]{babel}
\usepackage{amsfonts}
\usepackage{amssymb}
\usepackage{amsthm}
\usepackage{amsmath}
\usepackage{amscd}
\usepackage{newlfont}

\newtheorem{Theorem}{Theorem}
\theoremstyle{remark}

\newtheorem{Proposition}[Theorem]{Proposition}

\def\bysame{\leavevmode\hbox to3em{\hrulefill}\thinspace}
\newcommand{\vettore}[1]{\overset{\rightarrow}{#1}}


\title{Loops of Lie type: a $p$-adic example}

\author{Raffaello Caserta}

\begin{document}

\maketitle\label{chapterCaserta}

\section*{Introduction}

The purpose of this survey is to give neither a full detailed course of differential geometry and Lie theory nor of differential loops, we would rather sketch a path which leads us from the geometry of curved surfaces to the non associative features of the local algebraic structures associated to them.

The sum of two vectors on the Euclidean plane according to the so called parallelogram rule is an elementary geometrical process. Assuming that three points $\Omega, A, B$ of the Euclidean plane are given, the sum of the vectors $\vettore{\Omega A}, \vettore{\Omega B}$ is the vector $\vettore{\Omega C}$, where $C$ is the resulting point of the concatenation of the vectors $\vettore{\Omega A}$ and $\vettore{v}$, with $\vettore{v}$ being the unique parallel vector to $\vettore{\Omega A}$ applied in $B$. This sum between vectors also defines a commutative operation $A+B=C$ on the set of the points of the plane. The parallelogram rule takes advantage on the identification between applied vectors and segments, on the parallelism of lines and on the uniqueness of a parallel line to a fixed one and through a point off of it. Each of these are characteristic of the flat Euclidean plane, and it makes sense to investigate how to generalize each of these to a non-flat surface, if possible at all, which is exactly the starting point of the work of M. Kikkawa about differential loop defined on a differentiable real manifold with an affine connection. In this chapter we will offer an example based on the work of Kikkawa and developed on a $p$-adic manifold.

In order to do that, we will firstly introduce some basic notions of differential geometry such as differential manifolds, germs of functions, derivations, tangent space in a point and global tangent space (the tangent bundle), the differential and the vector fields. Such topics are quite familiar for a real algebraic subspace or manifolds and are part of the course of real function analysis. What we will recall below is important in order to generalize familiar geometrical objects to the contexts where the spatial representation fails, due to the complexity of Euclidean surfaces, or even for the exoticity of non Euclidean objects which can be constructed by changing the supporting field. 

What we are going to introduce holds both for real and $p$-adic (analytic) manifolds. Afterwards we will give account of the work of Kikkawa with local loops, which are defined by means of affine connection, geodesics, parallelism and the exponential map, and in the last section we will construct a loop on the surface of the $p$-adic sphere. Many problems have to be solved if one tries to export the differential geometry of the real sphere, where a local loop is always defined in a neighbourhood of one pole,  to the $p$-adic sphere: the most evident is due to the fact that the differential structure of the real sphere is induced by the Riemannian metric, which has no counterpart on a $p$-adic manifold. Even the customary affine connection of a real sphere can be easier induced by the Riemannian metric and a geometrical meaningfully analogue in the $p$-adic geometry is far to be reached. In order to compute the geodesic curves of the $p$-adic $2$-sphere, we need a covariant derivative which is defined by parallel displacing the vectors of the tangent bundle of the manifold, and the notion of parallelism which is induced by the connection. We will see that for the $p$-adic $2$-sphere as a homogeneous space, hence, as principal bundle is a reductive space, therefore, a connection can be defined by differentially splitting at each point the tangent space, a Lie algebra, into the direct sum of a vertical and a horizontal component.

\section{Elements of differential geometry}

In the first half of the XIX century, K. F. Gau{\ss} proposed in his work \textit{Disquisitiones generales circa superficies curvas} (1827) the possibility to consider a real (hyper) surface as a space itself. A real manifold, as a subspace of $\mathbb{R}^n$, already inherits many geometrical structures, the tangent space for instance, which do not necessarily need to be defined intrinsically within the variety, they are immediately available since they are defined globally in the entire space $\mathbb{R}^n$. However, intrinsic definitions and the development of an abstract theory of manifolds from inside a manifold have a double benefit: on the one hand it makes possible to study ``strange'' manifolds, real or not, on the other hand we can understand non-Euclidean or even not real manifolds which cannot be embedded into a real Euclidean space. Thanks to this approach, we can use the same theoretical frame developed for a real manifold in order to provide an example which is built on a $p$-adic manifold.

\subsection{Differential manifold}

A real differential manifold is a geometrical object which locally behaves like a portion of the Euclidean real space, therefore, locally, it is equipped with a system of coordinates as well as an Euclidean space is equipped with a global system of coordinates. Already Euler employed  parameters in order to describe a surface, that is, by varying the values of the parameters we get all the points of the manifold.

\subsubsection{Manifold and local coordinates}

Let $U,V$ be open subsets of a finite dimensional real vector space. A map $f:U\to V$ is of class $\mathcal{C}^\infty(U)$ or \textit{smooth} if there exists each partial derivative of $f$ of every order. Throughout this work we will always deal with analytic manifolds as differentiable manifolds\footnote{By means of Lie theory we will introduce also a $p$-adic analytic manifold. Although historically many tools of differential geometry have been developed in an Archimedean metrical context, many propositions still hold when the supporting field is any field of characteristic zero, in particular they hold for the field of $p$-adic numbers.}.\\

Let $M$ be a topological space. An $n$-dimensional \textit{smooth atlas} of $M$ is a set of triples $\{(\varphi_i,U_i,A_i)\}_{i\in I}$ with the \textit{charts} $\varphi_i:U_i\to A_i$, $i\in I$, such that 
\begin{enumerate}
\item $\{U_i\}_{i\in I}$ is an open covering of $M$,
\item $\{A_i\}_{i\in I}$  a set of open subsets of $\mathbb{R}^n$,
\item $\varphi_i$, $i\in I$, are all homeomorphisms and 
\item $\varphi_i\circ\varphi_j^{-1}:\varphi_j(U_i\cap U_j)\to \varphi_i(U_i\cap U_j)$,$i,j\in I$, are smooth diffeomorphisms. 
\end{enumerate}

A topological space $M$ is a \textit{real differential manifold} of dimension $n\in\mathbb{N}$ if it is equipped with an $n$-dimensional \textit{atlas}\footnote{A more rigorous definition requires that the space $M$ is also a local compact Hausdorff space satisfying the second numerability axiom (hence a paracompact manifold), moreover one should define classes of atlas which are equivalent up to diffeomorphism. It is also worth to emphasize that the dimension of the manifold is independent by the chosen atlas}.\\

As we already pointed out, unlike the Euclidean spaces, that a manifold does not have a global system of coordinates, that is, each chart $(U,\varphi)$ carries a system of coordinates called \textit{local coordinates}, which we will denote by $\varphi=(x_1,x_2,\ldots, x_n)$, that is, $\varphi(m)=(\varphi_1(m),\varphi_2(m),\ldots,\varphi_n(m))=(x_1,x_2,\ldots,x_n)\in\mathbb{R}^n$. One can transform the coordinates of a point in a system to another system by means of the Jacobian matrix of the transition map. It is worth to emphasize that each time a new object is defined in terms of theoretical properties, there always exists a representation in terms of local coordinates in the same way as an abstract vector has a numerical representation with respect to a basis.\\

As an example of smooth manifolds we have the analytic manifolds.
Let $K$ be a field of null characteristic either Archimedean (f.i. $K=\mathbb{R}$ or $\mathbb{C}$) or a local (f.i. $K=Q_p$) and let $V,W$ be two normed $K$-vector space, $U$ an open set of $V$ and $f:U\subseteq V\to W$ a map. Say that $f$ is \textit{strictly differentiable}\footnote{The strictly differentiation guarantees the local existence of a solution of a first order differential equation, whence it allows us to prove the equivalence between curves and vector fields, that is. Nevertheless, once the differential structure is encoded into the Lie group of transformation, the algebraic structure simplifies the approach.} in $u_0\in U$ if there exists a continuous linear map $D_{u_0}f:V\to W$ such that for all $\varepsilon>0$ there exists an open neighborhood $U_\varepsilon\subseteq U$ of $u_0$ such that
$$
\|f(\underline{v}_1)-f(\underline{v}_2)-D_{u_0}f(\underline{v}_1-\underline{v}_2)\|\leq\varepsilon\|\underline{v}_1-\underline{v}_2\|
$$
for all $\underline{v}_1,\underline{v}_2\in U_\varepsilon$. In particular, a map $f:U\subseteq V\to W$ is \textit{locally analytic} if for all $x_0\in U$ there exists a ball $B_\varepsilon(x_0)\subseteq U$ around $x_0$ and a power series $F$ such that $f(x)=F(x-x_0)$ for all $x\in B_\varepsilon(x_0)$ (see \cite{schneider2011p} page  38). If $f:U\to V$ is locally analytic, then $f$ is strictly differentiable in each $x_0\in U$ and $x\mapsto\mathrm{d}_x f$ is locally analytic (see \cite{schneider2011p} Proposition 6.1 page  39). In particular $f$ is locally constant on $U$ if and only if $\mathrm{d}_x f=0$ for all $x\in U$ (see \cite{schneider2011p} Remark 6.2 page  40). Of course, compositions, Cartesian products, and projections of locally analytic maps are also analytic.
As usual, $K^n$ is a locally analytic manifold over $K$ and the matrix group $\mathrm{GL}_n(K)$ as a submanifold of $K^{n^2}$ is a group and a differential manifold (a Lie group) (Section \ref{casertalie-group} p. \pageref{casertalie-group}) with respect to the matrix product (see Examples page 90 in \cite{schneider2011p}).

\subsubsection{Tangent space to a manifold in a point}
In this section we will construct three isomorphic vector spaces in order to give the definition of tangent space in a point to a manifold $M$ by means of derivations, of the algebra of germs and of smooth curves. We will also later define the applied vectors space on the manifold $M$ by introducing the tangent bundle, two applied vectors being parallel according to the so called connection or, equivalently, the covariant derivative.

\textbf{Class of curves}
The most familiar way to define a tangent vector in $m$ to $M$ is to take a smooth curve through the fixed point $m$ and to consider the tangent vector in $m$ to the curve. Let $m\in M$,  $\gamma:(-\varepsilon,\varepsilon)\to M$ be a smooth curve centered in $m$, that is, $\gamma(0)=m$, and let $(\varphi,U)$ be a chart with  $(-\varepsilon,\varepsilon)\subseteq U$ through $m=\gamma(0)$. Set $\gamma_\varphi:=f\circ\gamma$, then  
\begin{equation}
\dot{\gamma}_\varphi(0):=\mathrm{d}/\mathrm{d}t|_{t=0}(\varphi\circ\gamma)\in\mathbb{R}^n
\end{equation}
is a $n$-tuple of scalars to be understood as the local coordinates of ``a tangent vector'' in $m$ to $\gamma$. If $(\psi,V)$ is another chart through $m$ and $J=(\partial_i(\psi\circ\varphi^{-1})/\partial x_j)$ is the Jacobian matrix of the map $\psi\circ\varphi^{-1}$ in $\varphi(m)$, then 
$$
\dot{\gamma}_\varphi(0)=J\,\dot{\gamma}_\psi(0),
$$
that is, they are tensorial. Whence, given two curves $\gamma,\chi$, the relation $\chi\sim\gamma$ if and only if $\gamma(0)=\chi(0)$ and $\dot{\chi}_\varphi(0)=\dot{\gamma}_\varphi(0)$ for some chart $\varphi, \psi$ is an equivalence whose classes will be denoted by $[\gamma]$ or $\dot{\gamma}(0)$, due to the independence of the charts. Each class ideally identifies a vector of $\mathbb{R}^n$ applied in $m$ up to local coordinates representation. We can define two operations
$$
[\gamma_1]+[\gamma_2]:=[\gamma_1+\gamma_2],\quad \lambda[\gamma]:=[\lambda\gamma],\quad [\gamma_1],[\gamma_2],[\gamma]\in\Gamma_m,\, \lambda\in\mathbb{R}
$$
by setting for all $t\in(-\varepsilon,\varepsilon)$
\begin{eqnarray*}
\gamma_1+\gamma_2 &:& t\mapsto \varphi^{-1}(\varphi(\gamma_1(t))+\varphi(\gamma_2(t)))\\ \lambda\gamma &:&t\mapsto\varphi^{-1}(\lambda\varphi(\gamma(t))).
\end{eqnarray*}
Both operations are well defined since
$$
(\gamma_1+\gamma_2)'_\varphi(0)=(\dot{\gamma}_1)_\varphi(0)+(\dot{\gamma}_2)_\varphi(0),\quad (\lambda\gamma)'_\varphi=\lambda\dot{\gamma}_\varphi(0).
$$
Hence, with respect to these operations, $\Gamma_m$ is a real vector space of dimension $n$, a basis of which is given for example by 
$[\varphi^{-1}(t\underline{e}_i+\varphi(m))]$, $i=1,2,\ldots, n$, where
$\{\underline{e}_i:i=1,2,\ldots, n\}$ is the canonical basis of $\mathbb{R}^n$ and $t\in \mathbb{R}$ such that $t\underline{e}_i+\varphi(m)\in U$, for all $i=1,2,\ldots n$.

\textbf{Derivation in a point}
If $f:M\to\mathbb{R}$ is a smooth map and $\gamma$ is a smooth curve centered in $m$, then the scalar $\mathrm{d}/{\mathrm{d}\tau}|_{\tau=0}(f\circ\gamma)$
does not depend either from the chart or from the curve, since, in local coordinates $\varphi=(x_i)_i$, we have
$$
\mathrm{d}/{\mathrm{d} t}|_{t=0}(f\circ\gamma)=
{\mathrm{d}/{\mathrm{d}t}|_{t=0}(f\circ\varphi^{-1}\circ\varphi\circ\gamma)}=\sum_{i=1}^n\partial x_i(f\circ\varphi^{-1})(\varphi(m))\dot{\gamma}_\varphi(0)_i.
$$

Define a \textit{derivation} $D_{|m}$ of $M$ in $m\in M$ as  a linear form
$$
D_{|m}:\mathcal{C}_m^\infty(M)\to\mathbb{R}
$$
which satisfies the Leibniz law $D_{|m}(fg)=f(m)D_{|m}(g)+g(m)D_{|m}(f)$. If $D_{|m},D^1_{|m},D^2_{|m}$ are derivations in $m$, then the operations
$$
(D^1_{|m}+D^2_{|m})(f):=D^1_{|m}(f)+D^2_{|m}(f),\quad (\lambda D^1_{|m})(f):=\lambda D_{|m}(f),\, \lambda\in\mathbb{R},
$$
turn the set $\mathrm{Der}_m(M)$ of all derivations in a vector space of the same dimension of $M$, as the following proposition shows.

\begin{Proposition}
A basis of $\mathrm{Der}_m(M)$ in local coordinates $\varphi=(x_i)_i$ is the set $\{\partial x_i|_m\}_{i=1,2,\ldots,n}$, where
$$
\partial x_i|_m(f):=\partial x_i(f\circ\varphi^{-1})(\varphi(m)), \, i=1,2,\ldots, n.
$$
Therefore, with respect the previous basis, the derivation $D_{|m}$ has components $\{D|_m(\varphi_i)\}$, that is,
$$
D|_m=\sum_{i=1}^n D|_m(\varphi_i) \partial x_i|_m.
$$
Moreover, if $\psi=(y_i)_i$ is another chart and $J$ the Jacobian of $\psi^{-1}\circ\varphi$ in $\varphi(m)$, then $D|_m(\psi)=JD|_m(\varphi)$.
\end{Proposition}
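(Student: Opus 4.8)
The plan is to establish, in order, four things: that each $\partial x_i|_m$ actually lies in $\mathrm{Der}_m(M)$; a Hadamard-type local expansion of an arbitrary smooth (analytic) function near $m$; the identity $D|_m=\sum_i D|_m(\varphi_i)\,\partial x_i|_m$ for every derivation, which at once gives spanning and the stated components; and linear independence, after which the transformation law drops out. I would first check that $\partial x_i|_m\in\mathrm{Der}_m(M)$: linearity is clear, and the Leibniz identity for it is just the ordinary product rule applied to $(fg)\circ\varphi^{-1}=(f\circ\varphi^{-1})\cdot(g\circ\varphi^{-1})$ and evaluated at $\varphi(m)$. It is also convenient to record at the outset that every derivation kills constants: taking $f=g=1$ in the Leibniz law gives $D|_m(1)=2D|_m(1)$, hence $D|_m(1)=0$ and $D|_m(c)=0$ for every constant $c$ by linearity.

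The core is the following expansion. Put $a=\varphi(m)$ and $g=f\circ\varphi^{-1}$, defined on a neighbourhood of $a$ which may be taken to be a ball (Archimedean case) or a polydisc ($p$-adic case); then there are $h_1,\dots,h_n$, smooth respectively locally analytic on that neighbourhood, with
\[
g(x)=g(a)+\sum_{i=1}^n (x_i-a_i)\,h_i(x),\qquad h_i(a)=\partial_i g(a).
\]
In the Archimedean case this follows from $g(x)-g(a)=\int_0^1 \tfrac{d}{dt}g(a+t(x-a))\,dt$ together with $h_i(x)=\int_0^1 \partial_i g(a+t(x-a))\,dt$; in the $p$-adic analytic case, where $g$ is given near $a$ by a convergent power series $\sum_\alpha c_\alpha (x-a)^\alpha$, I would instead regroup the monomials according to the smallest index $i$ with $\alpha_i\ge 1$, obtaining power series $h_i$ with the same polydisc of convergence and $h_i(a)=c_{e_i}=\partial_i g(a)$. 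Composing with $\varphi$ and using $\varphi_i(m)=a_i$ then yields, as germs at $m$, $f=f(m)+\sum_i(\varphi_i-\varphi_i(m))\,(h_i\circ\varphi)$ with $(h_i\circ\varphi)(m)=\partial x_i|_m(f)$.

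Applying an arbitrary $D|_m$ to this identity — using linearity, the vanishing on the constant $f(m)$, and the Leibniz law on each product $(\varphi_i-\varphi_i(m))(h_i\circ\varphi)$, whose first term drops because $\varphi_i(m)-\varphi_i(m)=0$ — would leave $D|_m(f)=\sum_i D|_m(\varphi_i)\,(h_i\circ\varphi)(m)=\sum_i D|_m(\varphi_i)\,\partial x_i|_m(f)$, which is both the spanning statement and the component formula. Independence is then immediate: $\varphi_i\circ\varphi^{-1}$ is the $i$-th coordinate projection, so $\partial x_j|_m(\varphi_i)=\delta_{ij}$, and a relation $\sum_j c_j\,\partial x_j|_m=0$ tested on $\varphi_i$ forces $c_i=0$; hence $\{\partial x_i|_m\}$ is a basis and $\dim\mathrm{Der}_m(M)=n$. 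For the change of chart I would apply the component formula to the functions $\psi_j$: $D|_m(\psi_j)=\sum_i D|_m(\varphi_i)\,\partial x_i|_m(\psi_j)$, and the matrix with entries $\partial x_i|_m(\psi_j)=\partial x_i(\psi_j\circ\varphi^{-1})(\varphi(m))$ is precisely the Jacobian $J$ of the transition map $\psi\circ\varphi^{-1}$ at $\varphi(m)$, whence $D|_m(\psi)=J\,D|_m(\varphi)$.

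The main obstacle is making the Hadamard expansion work uniformly over an Archimedean field and over $\mathbb{Q}_p$: the classical integral-remainder argument has no $p$-adic counterpart, so one must fall back on the power-series regrouping, which is exactly where the standing hypothesis that the manifold is analytic is used. A secondary point worth flagging is that the coordinate functions $\varphi_i$ and the factors $h_i\circ\varphi$ are defined only on a chart domain, not on all of $M$; this causes no trouble provided $\mathcal{C}_m^\infty(M)$ is understood as the algebra of germs at $m$, and one should note that the familiar remedy of multiplying by a bump function is unavailable over $\mathbb{Q}_p$ — a further reason the germ formulation is the natural one here.
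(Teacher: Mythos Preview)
Your proof is correct and follows the same overall strategy as the paper --- expand $f$ locally in the chart and apply $D|_m$ termwise --- but the technical implementation differs in a way worth noting. The paper writes a second-order expansion
\[
g=\sum_i \partial x_i(g\circ h^{-1})(0)\,h_i+\sum_{ij}a_{ij}h_ih_j,\qquad h=\varphi-\varphi(m_0),
\]
and disposes of the quadratic remainder via the preliminary observation that $D|_m$ annihilates any product of two germs vanishing at $m$. You instead use the first-order Hadamard form $g(x)=g(a)+\sum_i(x_i-a_i)h_i(x)$ with \emph{variable} $h_i$, and the unwanted Leibniz term vanishes directly because $(\varphi_i-\varphi_i(m))(m)=0$. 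Both routes work; yours is marginally cleaner, and --- more to the point in this paper --- you make explicit what the paper leaves implicit: the classical integral proof of the expansion has no $p$-adic analogue and must be replaced by a power-series regrouping, which is precisely where the standing analyticity hypothesis is used. You also fill in two steps the paper's proof omits: that each $\partial x_i|_m$ is indeed a derivation, and that $\partial x_j|_m(\varphi_i)=\delta_{ij}$ gives linear independence (the paper only argues the spanning direction).
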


\begin{proof}
Let $\mathbf{1}:m\in M\mapsto 1\in\mathbb{R}$ be the unitary constant function. Then:
\begin{enumerate}
\item $D|_m(\mathbf{1})=D|_m(\mathbf{1}\mathbf{1})=2D|_m(\mathbf{1})$, hence $D|_m(\mathbf{1})=0$ and
\item if $f(m)=g(m)=0$, then $D|_m(fg)=0$ for $f,g\in \mathcal{C}^\infty(M)$.
\end{enumerate}
Let $m_0\in M$ be fixed, $f\in\mathcal{C}^\infty(M)$ and $\varphi$ a chart through $m_0$. Set $g:=f-f(m_0)\mathbf{1}$ and $h:=\varphi-\varphi(m_0)\mathbf{1}$ (which is still a chart through $m_0$). One has therefore $g(m_0)=0$ and $h(m_0)=0$, hence
$$
g=\sum_i\partial x_i(g\circ h^{-1})(0)h_i+\sum_{ij}a_{ij}h_ih_j
$$
for some smooth maps $a_{ij}$, whence $D|_m(g)=\sum_i\partial x_i(g\circ  h^{-1})(0)D|_m(h_i)$. 
Since $h=\varphi-\varphi(m_0)\bold{1}$, one has $x=h(m)=\varphi(m)-\varphi(m_0)$, that is, $h^{-1}(x)=m=\varphi^{-1}(x+\varphi(m_0))$, therefore
$$
(g\circ h^{-1})(x)=((f-f(m_0)\bold{1})\circ h^{-1})(x)=(f\circ h^{-1}-f(m_0))(x)=
(f\circ\varphi^{-1})(x+\varphi(m_0))
$$
which implies
$$
\partial x_i (g\circ h^{-1})(h(m))=\partial x_i (f\circ \varphi^{-1})(\varphi(m)).
$$
Moreover, $D|_m(h_i)=D|_m(\varphi_i-\varphi(m_-0)\bold{1})=D|_m(\varphi)$
and $D|_m(g)=D|_m(f-f(m_0)\bold{1})=D|_m(f)$, thus, in local coordinates
$$
D|_m(f)=\sum_i\partial x_i(f\circ \varphi^{-1})(\varphi(m))D|_m(\varphi_i).
$$
Finally, given another chart $(\psi_i)=(y_i)$,
\begin{multline*}
\partial y_i (f)=
\partial \psi_i (f\circ\psi^{-1})=
\partial \psi (f\circ\varphi^{-1}\circ\varphi\circ\psi^{-1})=\\
\sum_k \partial x_k (f\circ\varphi^{-1}) \partial x_i (\varphi\circ\psi^{-1})_k=
\sum_k \partial x_k(f) \partial x_i (\varphi\circ\psi^{-1})_k
\end{multline*}.
\end{proof}

\begin{Proposition}
The vector spaces $\Gamma_m$ and $\mathrm{Der}_m(M)$ are isomorphic.
\end{Proposition}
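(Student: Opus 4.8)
The plan is to construct an explicit linear map $\Phi\colon\Gamma_m\to\mathrm{Der}_m(M)$ sending the class of a curve to the directional derivative along it, and then to show that it carries the distinguished basis of $\Gamma_m$ onto that of $\mathrm{Der}_m(M)$. First I would define, for a smooth curve $\gamma$ centred in $m$, the linear form
$$
D_{[\gamma]}\colon\mathcal{C}_m^\infty(M)\to\mathbb{R},\qquad D_{[\gamma]}(f):=\frac{\mathrm{d}}{\mathrm{d}t}\Big|_{t=0}(f\circ\gamma).
$$
The computation already carried out in the paragraph \textbf{Derivation in a point} shows that in local coordinates $\varphi=(x_i)_i$ one has $D_{[\gamma]}(f)=\sum_{i=1}^n\partial x_i(f\circ\varphi^{-1})(\varphi(m))\,\dot{\gamma}_\varphi(0)_i$; in particular this value depends only on the $n$-tuple $\dot{\gamma}_\varphi(0)$, hence only on the class $[\gamma]$, so $D_{[\gamma]}$ is well defined. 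That it is $\mathbb{R}$-linear is clear, and the Leibniz rule follows from the ordinary product rule applied to $t\mapsto f(\gamma(t))g(\gamma(t))$ at $t=0$, together with $(f\circ\gamma)(0)=f(m)$; thus $D_{[\gamma]}\in\mathrm{Der}_m(M)$ and we obtain a map $\Phi\colon[\gamma]\mapsto D_{[\gamma]}$.

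Next I would check that $\Phi$ is linear. Using the definitions of $[\gamma_1]+[\gamma_2]$ and $\lambda[\gamma]$ together with the identities $(\gamma_1+\gamma_2)'_\varphi(0)=(\dot{\gamma}_1)_\varphi(0)+(\dot{\gamma}_2)_\varphi(0)$ and $(\lambda\gamma)'_\varphi(0)=\lambda\dot{\gamma}_\varphi(0)$ recorded above, the coordinate formula for $D_{[\gamma]}$ immediately yields $\Phi([\gamma_1]+[\gamma_2])=\Phi([\gamma_1])+\Phi([\gamma_2])$ and $\Phi(\lambda[\gamma])=\lambda\Phi([\gamma])$.

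Finally, I would establish bijectivity by evaluating $\Phi$ on the basis $[\varphi^{-1}(t\underline{e}_i+\varphi(m))]$, $i=1,\dots,n$, of $\Gamma_m$ exhibited earlier. For the curve $\gamma_i(t)=\varphi^{-1}(t\underline{e}_i+\varphi(m))$ one has $(\varphi\circ\gamma_i)(t)=t\underline{e}_i+\varphi(m)$, so $(\dot{\gamma}_i)_\varphi(0)=\underline{e}_i$, whence
$$
\Phi([\gamma_i])(f)=\partial x_i(f\circ\varphi^{-1})(\varphi(m))=\partial x_i|_m(f),
$$
i.e. $\Phi([\gamma_i])=\partial x_i|_m$. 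Thus $\Phi$ maps the basis $\{[\gamma_i]\}$ of $\Gamma_m$ onto the basis $\{\partial x_i|_m\}$ of $\mathrm{Der}_m(M)$ furnished by the previous Proposition, so $\Phi$ is a linear isomorphism. (Equivalently, $\Phi$ is injective since $\Phi([\gamma])=0$ forces $\dot{\gamma}_\varphi(0)_i=D_{[\gamma]}(\varphi_i)=0$ for every $i$, hence $[\gamma]=0$, and $\dim\Gamma_m=n=\dim\mathrm{Der}_m(M)$ then closes the argument.)

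I do not expect a genuine obstacle here: everything rests on the chain-rule identity already proved before the statement and on the identification of the two coordinate bases. The one point deserving care is the well-definedness of $D_{[\gamma]}$, namely that the directional derivative depends only on the equivalence class of $\gamma$ and not on the chosen representative or chart — and this is precisely what the tensoriality of $\dot{\gamma}_\varphi(0)$ guarantees.
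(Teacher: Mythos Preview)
Your argument is correct and follows the same construction as the paper: both define the map $\Phi\colon[\gamma]\mapsto D_{[\gamma]}$ via the directional derivative $\tfrac{\mathrm d}{\mathrm dt}\big|_{t=0}(f\circ\gamma)$ and invoke the chain-rule identity established just before the statement. The only cosmetic difference is in how bijectivity is checked: the paper exhibits the inverse directly by assigning to a derivation $D|_m$ the affine curve $\tilde\gamma=\varphi^{-1}\big((D|_m(\varphi_1),\dots,D|_m(\varphi_n))\,t+\varphi(m)\big)$ and verifying $D_{[\tilde\gamma]}=D|_m$, whereas you argue that $\Phi$ carries the coordinate basis of $\Gamma_m$ onto $\{\partial x_i|_m\}$; these are two phrasings of the same computation.
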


\begin{proof} 
Let $\gamma:I\to M$ be a smooth curve. The map 
\begin{equation}\label{casertaderivation-induced-by-a-curve}
D_{\gamma,t}:f\in\mathcal{C}^\infty(M)\mapsto D_{\gamma,t}(f):=\mathrm{d}/\mathrm{d}\tau|_{\tau=t}(f\circ\gamma)
\end{equation}
is a derivation in $\gamma(t)$ for all $t\in I$, in particular $D_{\gamma,t}=\sum\dot{\gamma}_\varphi(t) \partial\varphi_i|_{\gamma(t)}$ for some chart $(U,\varphi)$. Conversely, if $D|_m$ is a derivation, for a chart $\varphi=(\varphi_i)$, let 
$$
\gamma(\tau):=(D|_m(\varphi_1),\ldots,D|_m(\varphi_n))(\tau-t)+\varphi(m)
$$
one has $\gamma(t)=\varphi(m)$ and for $\tilde{\gamma}=\varphi^{-1}\circ\gamma$ one has
\begin{multline*}
\mathrm{d}/{\mathrm{d}\tau}|_{\tau=t}(f\circ\tilde{\gamma})=
\mathrm{d}/{\mathrm{d}\tau}|_{\tau=t}(f\circ\varphi^{-1}\circ\gamma)=\sum_i\partial x_i(f\circ\varphi^{-1})(\varphi(m))\dot{\gamma}_i(t)=\\
=\sum_i\partial x_i(f\circ\varphi^{-1})(\varphi(m))D|_m(\varphi_i)=D|_m(f).
\end{multline*}
\end{proof}

\textbf{Algebra of germs and cotangent space}
Let $m\in M$ be fixed and denote by $C_m^\infty(M)$ the set of all pairs $(U,f)$ with $U$ open, $m\in U$ and $f:U\to\mathbb{R}$ smooth. With respect to the sum and product of functions, $C_m^\infty(M)$ is a ring and the evaluation
$$
\mathrm{eval}_m:(U,f)\in\mathcal{C}_m^\infty\mapsto f(m)\in\mathbb{R}^n
$$
is a ring epimorphism. The kernel $I$ is an ideal which leads to the equivalence $(U,f)\sim (V,g)$ if and only if $f(x)=g(x)$ for all $x\in U\cap V$. We term the class $[(U,f)]$ \textit{germ}, that is, the class of all functions which take the same values in some neighborhood of $m$, and we denote by  $\mathfrak{C}_m^\infty(M)$ the factor ring $\mathfrak{C}^\infty_{m}(M):=C_m^\infty(M)/I=C_m^\infty(M)/\sim$. The ring $\mathfrak{C}^\infty_{m}(M)$ is graded and local and $\mathfrak{m}_{m}$ is the unique maximal ideal, the ideal of all germs which are null in $m$. Moreover, the set
$$
\mathfrak{m}_{m}^k=\{[f_1][f_2]\cdots[f_k]: [f_i]\in\mathfrak{C}_{m}^\infty(m)\},\, \mathfrak{m}_m=I 
$$
of all products of $k$ germs, locally, is the set of all germs which have null partial derivatives in $m$ to the order $k$, because if $f(m)=g(m)=0$, then $D|_m(fg)=f(m)D|_m(g)+D|_m(f)g(m)=0$. We have therefore the chain of ideals
$$
\ldots\triangleleft\mathfrak{m}_{m}^k\triangleleft\mathfrak{m}_{m}^{k-1}\triangleleft\ldots\triangleleft\mathfrak{m}_{m}\triangleleft\mathfrak{C}_{m}^\infty(M).
$$
We term the vector space $\mathfrak{m}_{m}/\mathfrak{m}_{m}^2$ \textit{cotangent vector space} which is isomorphic to $\mathbb{R}^n$, where $n$ is the dimension of the manifold. We construct the isomorphism as following. Fix a chart $\varphi$ such that $\varphi(m_0)=0$ (one can eventually translate the chart). We have the enveloping in $\varphi(m_0)$ of a map $f \in \mathcal{C}^\infty(M)$
$$
f(m)=f(m_0)+\sum_i\partial \varphi_i f(m_0)\varphi_i(m)+\sum_{ij}a_{ij}\varphi_i(m)\varphi_j(m)
$$
for some maps $a_{ij}$. Therefore, if $f(m_0)=0$, then
$$
[f]+\mathfrak{m}_{m_0}^2=\left[\sum_i\partial \varphi_i f(m_0)\varphi_i\right]+\mathfrak{m}_{m_0}^2,
$$
hence, the vector space isomorphism
$$
\sum_i\partial\varphi_i f(m_0)[\varphi_i]+\mathfrak{m}_{m_0}^2\in \mathfrak{m}_{m_0}/\mathfrak{m}_{m_0}^2 \mapsto 
\left(\partial\varphi_i f(m_0)\right)_{i=1,2,\ldots,n}\in\mathbb{R}^n.
$$
Taking a derivation $D_{|m}$ in $m$, since $D_{|m}(ff)=0$ when $f(m)=0$, the derivation induces a linear application $\mathfrak{m}_{m}/\mathfrak{m}_{m}^2\to\mathbb{R}$. Indeed, consider the map
$$
[\gamma]\in \Gamma_m\mapsto \tilde{D}_{\gamma}\in(\mathfrak{m}_{m_0}/\mathfrak{m}_{m_0}^2)^*
$$
where $D_\gamma$ is the derivation induced by $\gamma$ (equation (\ref{casertaderivation-induced-by-a-curve}), page \pageref{casertaderivation-induced-by-a-curve}) $\tilde{D}_{\gamma}[f]=[D_{\gamma(f)}]$. The map is well defined, since if $D_\gamma$ is a derivation in $m$ and $f(m)=g(m)=0$, then 
$$
D_{\gamma}([f]+[g_1][g_2])=D_{\gamma}([f + g_1g_2])=D_{\gamma}([f]),
$$
hence $D(fg)=0$.\\

In particular
$$
\mathfrak{m}_{m_0}/\mathfrak{m}_{m_0}^2\cong (\mathfrak{m}_{m_0}/\mathfrak{m}_{m_0}^2)^{**}\cong \Gamma_m^*.
$$
thus, we have the equivalence of the three definitions, in particular, the vector spaces $(I/I^2)^*$ and $\mathrm{Der}_m(M)$ are isomorphic. Let $I^2=\{fg:f,g\in I\}$ and consider the vector space $I/I^2$. For all $D\in\mathrm{Der}_m(M)$, $D(I^2)=0$, since, if $f$ is not trivial and constant, then there exists an open neighborhood $U$ of $m$ such that $f_{|U}=c\in\mathbb{R}^*$, thus
$$
cD_{|m}(f)=D_{|m}(cf)=D_{|m}(ff)=2cD_{|m}(f).
$$ 
The mapping
$$
\tilde{D}:I/I^2\to \mathbb{R}
$$ 
which maps $f+I^2$ to $D(f)+I^2$ is well defined and is a form. In particular $D\to\tilde{D}$ is a vector spaces isomorphisms, thus $\mathrm{Der}_m(M)$ and the dual space $(I/I^2)^*$ are canonically isomorphic.\\

We can make use of one of the three equivalent definitions for the \textit{tangent space} $T_m M$ to $M$ in $m\in M$, that is, one of the pairwise isomorphic vector spaces
$$
\mathrm{Der}_m(M)\cong(\mathcal{C}_m^\infty(M)/\mathcal{C}_{m,0}^\infty(M))^*\cong\Gamma_m.
$$

\subsubsection{Differential in a point} If $V,W$ are vector space and $f:V\to W$ is a homomorphism, then a homomorphism $f^t:W^*\to V^*$ between the dual spaces is defined by $f^t(\varphi):=\varphi\circ f$, $\varphi\in W^*$. Given two differential manifolds $M$ and $N$ and a map $f:M\to N$, one defines the algebra isomorphism $f^*:\mathcal{C}_m^\infty(N)\to\mathcal{C}_{f(m)}^\infty(M)$ by $f^*(h):=h\circ f$, that is,
\begin{enumerate}
\item $f^*(h+g)=(h+g)\circ f=h\circ f+ g\circ f$
\item $f^*(hg)=(hg)\circ f=(h\circ f)(g\circ f)$
\item $f^*(\lambda g)=\lambda g\circ f$
\item $f^*(h)=0$ if and only if $h\circ f(m)=0$ for all $m\in M$ ($h$ is null in some neighborhood of $f(m)$). 
\end{enumerate}

The \textit{differential in $m\in M$} of a map $f:M\to N$ between differential manifolds is the linear map $\mathrm{d}_mf:T_mM\to T_{f(m)}N$ which acts  as following
\begin{itemize}
\item if $D\in \mathrm{Der}_mM$, then $\mathrm{d}_mf(D):=D\circ f^*$;
\item if $g\in(\mathcal{C}_{m,0}^\infty(M)/\mathcal{C}_{m,0}^\infty(M)^2)^*$, then $\mathrm{d}_mf(g):=(\bar{f}^*)^t(g)$, where $\bar{f}^*$ is the application on the factor space induced by $f^*$;
\item if $\gamma\in\Gamma_m$, then $\mathrm{d}_mf([\gamma]):=[f\circ\gamma]$.
\end{itemize}

\subsection{Tangent bundle, vector fields and flows}
The global tangent space of $\mathbb{R}^n$ is just the direct product $\mathbb{R}^n\times \mathbb{R}^n$, that is, the set of all couple $(m,\vettore{v})$, where $m$ is a point in $\mathbb{R}^n$ and $\vettore{v}$ is an applied vector in $m$. The generalization of this construction to a manifold is made by means of a vector bundle, which is locally homeomorphic to a Cartesian product, globally (the trivial bundle) in the case $M=\mathbb{R}^n$ .

\subsubsection{Tangent bundle}   

Let $E$, $B$ be two topological spaces and $\pi:E\to B$ a subjective map. The triple $(B,E,\pi)$ is a \textit{fiber bundle} if there exists a topological space $F$, the fiber,  and an open subset $U$ of $B$ for all $b\in B$, such that $\pi^{-1}(b)$ is homeomorphic to $U \times F$. The local homeomorphisms are called \textit{local trivializations} and the fiber bundle is said \textit{trivial} if $E$ is globally homeomorphic to $B\times F$. If $B$ is a differential manifold, then it is possible to equip $E$ with a compatible atlas by lifting the charts of $B$.\\

For a differential manifold $M$ we have defined at each point $m\in M$ the tangent space $T_mM$; the collection of all these spaces indexed by the points of $M$ is the tangent space $TM$ of the manifold $M$. More precisely $TM$ is a vectorial fiber bundle with respect to the map $\pi:TM\to M$, $\pi((m,\underline{v}))=m\in M$, such that the fiber $\pi^{-1}(m)$ is diffeomorphic to $T_mM$. If $(U,\varphi)$ is a local chart, then the local trivialization is the map
$$
\tilde{\varphi}:\vettore{v}_m=(m,\underline{v})\in\pi^{-1}(U)\mapsto (\varphi(m),\mathrm{d}_m\varphi(\underline{v}))\in\mathbb{R}^n\times \mathbb{R}^n
$$
and it is also a chart which turns $TM$ into a $2n$-dimensional real manifold, where $\underline{v}$ is a vector of a tangent space in a point, a free vector, and $\vettore{v}_m=(m,\underline{v})$ is the vector $\underline{v}$ applied in $m$. The bundle $TM$ is therefore the disjoint union of all tangent spaces, thus, we can also give the equivalent definition: the \textit{tangent bundle} $TM$ to $M$ is the disjointed union of the tangent spaces
$$
TM=\dot{\bigcup}_{m\in M} T_mM=\{\vettore{v}_m:=(m,\underline{v}):m\in M, \underline{v}\in T_mM\}.
$$

The differential and the derivation in a point of manifold can be smoothly extended to the tangent bundle as following. Let $f:M\to N$ be a map between manifolds. The \textit{differential} of $f$ is the linear application $\mathrm{d}f$ between the tangent bundles, $\mathrm{d}f:TM\to TN$ defined by  
$$
\mathrm{d}f(\vettore{v}_m)=\mathrm{d}f((m,\underline{v}))=(f(m),\mathrm{d}_mf(\underline{v})).
$$

Let $f,g:M\to\mathbb{R}$ be smooth. A \textit{derivation} is a linear mapping $D:C^\infty(M)\to C^\infty(M)$ which maps functions onto functions such that
\begin{enumerate}
\item $D(fg)=fD(g)+D(f)g$ and
\item if $f,g$ take the same values on a open set, then $D(f)=D(g)$. 
\end{enumerate}
The set $\mathrm{Der}(M)$ of all derivation is an algebra since
\begin{itemize}
\item $D(f+g)=D(f)+D(g)$,
\item $(\lambda D)(f)=\lambda D(f)$
\end{itemize}
and a $C^\infty(M)$-modulo since
\begin{itemize}
\item $(D_1\circ D_2)(f):=D_1(D_2(f))$, $(D_1+D_2)(f)=D_1(f)+D_2(f)$,
\item $(hD)(f)=hD(f)$, $h\in C^\infty(M)$.
\end{itemize}
In local coordinates
$$
D(f)=\sum \partial x_i(f\circ\varphi^{-1})D(\varphi_i).
$$

\subsubsection{Vector field, flows and Lie derivative}

A \textit{vector field} $X$ is a collection of applied vectors, one for each point of the manifold, more precisely, it is a smooth section of the projection $\pi:TM\to M$, i.e., a smooth map $X:m\in M\to X(m)=(m,X_m)\in TM$ with $X_m\in T_mM$ for all $m\in M$. One denotes by $\mathfrak{X}(M)$ the set of vector fields, which is an infinite dimensional vector space and locally finite dimensional.\\

Vector fields and derivation reciprocally induce themselves, that is, if $m\in M$, the vector $X(m)=(m,[\gamma])$, $\gamma(0)=m$ of the bundle $TM$ induces the derivation
$$
D_X:f\mapsto \mathrm{d}/\mathrm{d}t|_{t=0}(f\circ\gamma).
$$
Conversely, if $D$ is a derivation and $\gamma(t)=(D(\varphi_1),D(\varphi_2),\ldots, D(\varphi_n))t+\varphi(m))$ is a curve, then 
$$
X_D(m)=(m,[\varphi^{-1}\circ\gamma])
$$
is vector field. Therefore, the action $X(f):m\mapsto D_{X(m)}(f)$ of a vector field $X$ on a map $f\in\mathcal{C}^\infty$is fully defined.\\

Since a vector field is a smooth section, locally, there exists a curve which has as the tangent vector in each point the vector field in that point, the so called  \textit{integral curve} of the vector field. That is, given the field $X$, a curve $\gamma:I\to M$ is integral curve of $X$ if 
$$
X(\gamma(t))=(\gamma(t),[\gamma\circ(\mathrm{id}+\bold{1}_t)])=(\gamma(t),\dot{\gamma}(t)).
$$
The set of all these integral curves together is the so called \textit{flow} induced by the vector field. More generally, a flow $\Phi:A\times M\subseteq\mathbb{R}\times M\to M$ is a map such that
\begin{itemize}
\item $\gamma_m(\cdot):=\Phi(\cdot,m):\mathbb{R}\to M$ are smooth curves for all $m\in M$,
\item $\varphi_t(\cdot):=\Phi(t,\cdot):M\to M$ are diffeomorphisms and
\item $t\mapsto \varphi_t$ is an additive group homomorphism.
\end{itemize} 

If $\pi:TM\to M$ is the tangent bundle and $\gamma:(a,b)\to M$ is a smooth curve, consider the lift $\tilde{\gamma}:(a,b)\to TM$ of $\gamma$ to $TM$, that is, $\pi\circ\tilde{\gamma}=\gamma$ or equivalently $\tilde{\gamma}(s)=(\gamma(s),[\gamma_{s+}])$ where $\gamma_{s+}(t):=\gamma(s+t)$. We say that $\gamma$ is a integral curve of $X$ if
$$
X(\gamma(s))=(\gamma(s),[\gamma_{s+}]),\quad s\in(-\varepsilon,\varepsilon)
$$
that is, for all $m\in M$, $\gamma_m$ solves the Cauchy problem
$$
\left\{\begin{array}{l}
X(\gamma(s))=(\gamma(s),[\gamma_{s+}]),\quad s\in(-\varepsilon,\varepsilon)\\
\gamma(0)=m
\end{array}
\right. 
$$
and $\Phi_X(t,m):=\gamma_m(t)$, with $m\in M$ and $t\in(-\varepsilon,\varepsilon)$, is the local flow induced by $X$. In particular, if the manifold is compact, flows and vector fields induce each other. Conversely, a flow $\Phi$ induces the vector field
$$
X_\Phi:m\in M\mapsto (m,[\gamma_m=\Phi(\cdot,m)])\in T_mM
$$
where
$$
\left\{
\begin{array}{l}
\Phi(0,m)=m\\\\
\partial\Phi/\partial\tau(t,m)=\dot{X}_{\Phi(t,m)}(0)
\end{array}
\right.
$$
for all $m$ and $t$, where $X_{\Phi(t,m)}$ is a curve by $\Phi(t,x)$.\\

Since vector fields and derivation also induce each other, also a flow $\Phi$ induces the derivation
$$
D_{\Phi}(f):=\partial \tau (f\circ\Phi)_{|\tau=t}\in\mathcal{C}^\infty(M).
$$

If $\Phi$ is a flow, then $t\mapsto\Phi(t,m)$ is a group homomorphism for each $m$: this is an example of \textit{one parameter group of diffeomorphisms}. It is a set of diffeomorphisms $\{\varphi_i\}_{i\in I}$ of $M$ in itself such that $t\mapsto\{\varphi_t\}$, $t\in\mathbb{R}$, is an epimorphism, that is, there exists $\phi:\mathbb{R}\to\mathrm{Diff}(M)$ which satisfies
\begin{enumerate}
\item $\phi(t_1+t_2)=\phi(t_1)\circ\phi(t_2)$
\item $(t,m)\in\mathbb{R}\times M\mapsto \Phi(t,m):=\varphi_t(m)=\phi(t)(m)\in M$ is smooth.
\end{enumerate}
The second property states that the diffeomorphisms vary smoothly with respect to both $t\in\mathbb{R}$ and $m\in M$. To each homomorphism is associated the curve $\gamma_{\varphi}$ through $m$
$$
\gamma_{\varphi}:t\in \mathbb{R}\to \varphi_t(m)\in M
$$
since $\gamma_{\varphi}(0)=\phi(0)(m)=\mathrm{id}_M(m)=m$. Thus, to each $\varphi$ is associated the vector field
$$
\frak{X}(M)\ni\dfrac{\partial\Phi}{\partial t}\Big|_0:m\in M\mapsto (m,[\gamma_{\varphi}])\sim (m,\gamma_\varphi'(0))\in TM
$$
and $\dfrac{\partial\Phi}{\partial t}\Big|_0$ is an integral curve since it satisfies
$$
\dfrac{\partial\Phi}{\partial t}\Big|_0(\gamma(t))=(\gamma(t),[\gamma(t)])\sim\gamma'(t)\,\quad t\in(a,b)
$$

One says that the one parameter group of diffeomorphisms $\{\phi_t:t\in\mathbb{R}\}$ induces the vector field $X$ if $X_m=\dot{\gamma}(0)$ where $\gamma(t)=\varphi_t(m)$ and the vector field is said \textit{complete} if the group is defined globally. Conversely, a vector field $X$ generates the local group $\{\varphi_t\}$ if it induces $X$; such a group always exists and it is always global if the manifold is compact.\\  

Let $X$ be a vector field which generates the group $\{\varphi_t\}$. If $\varphi:M\to M$ is a differentiable map, then the \textit{pushforward} $\varphi_*(X)$ of a vector field $X$ generates the group $\{\varphi\circ\varphi_t\circ\varphi^{-1}\}$ and $X$ is said $\varphi$-\textit{left invariant} if $\varphi_*(X)=X$, that is, $\varphi,\varphi_t$ commute.\\

Remember that to each field $X$ a derivation is associated
$$
X:f\mapsto \left(Xf:m\mapsto\mathrm{d}/\mathrm{d}t|_{t=0}(f\circ\gamma)\right)
$$
where $X(\gamma)=\dot{\gamma}(t)$ and $\gamma(0)=m$. Thus, if $X,Y$ are vector fields, $X(Yf)$ its defined and
\begin{multline*}
X(Y(fg))=X(f(Yg)+g(Yf))=\\
=fX(Yg)+(Xf)(Yg)+gX(Yf)+(Xg)(Yf)
\end{multline*}

since $X(f(Yg))=fX(Yg)+(Xf)(Yg)$. Therefore,
\begin{multline*}
X(Y(fg))-Y(X(fg))=fX(Yg)+gX(Yf) - fY(Xg)-gY(Xf)=\\
=f(X(Yg)- fY(Xg))+g(X(Yf)-Y(Xf))
\end{multline*}
thus, $[X,Y]:f\mapsto [X,Y]f:=X(Yf)-Y(Xf)$ is the derivation associated to the commutator $[X,Y]$. In local coordinates, if $X=\sum_i X_i\vettore{\varphi}_i$ and $Y=\sum_i Y_i\vettore{\varphi}_i$, then
$$
[X,Y]=\sum_{ij} \left(X_i(\vettore{\varphi}_iY_j)-Y_i(\vettore{\varphi}_iX_j)\right)\vettore{\varphi}_j
$$

Let $X$ be a complete field which generates the group $\{\varphi_t\}$, $\Phi$ the associated flow and $Y$ another vector field. Since $\varphi_t$ is a diffeomorphism we have the differential map $\mathrm{d}\varphi_t:TM\to TM$, 
$$
\mathrm{d}\varphi_t: \vettore{v}_m\in T_mM\mapsto \mathrm{d}_m\varphi_t(\vettore{v}_m)\in T_{\varphi_t(m)}M
$$
hence the vector field
$$
Y_t=(\varphi_t)_*(Y):=\mathrm{d}\varphi_t\circ Y\circ\varphi_t^{-1}:M\mapsto T_{\varphi_t(m)}M
$$
the so called \textit{pull-back} of $Y$. We term the total derivative with respect to $t$
$$
\mathcal{L}_X Y_m:=\underset{t\to 0}{\mathrm{lim}}\dfrac{(\varphi_t)_*(Y)_m-Y_m}{t}=
-\mathrm{d}/\mathrm{d}t|_{t=0}(\varphi_t)_*(Y)_m.
$$
the \textit{Lie derivative} of $Y$ with respect to $X$. Standard arguments show that $\mathcal{L}_XY=[X,Y]$, that is, that the Lie derivative is actually the commutator of the fields, and the map $X\mapsto\mathcal{L}_X
$ is an affine connection. In particular one has 
$$
\mathcal{L}_XY-\mathcal{L}_YX-[X,Y]=[X,Y]
$$
where $\mathcal{L}_XY-\mathcal{L}_YX-[X,Y]$ is the torsion. We will see that the space $\frak{X}(M)$ of vector fields is an infinite dimensional Lie algebra with the commutator $\mathcal{L}_XY=[X,Y]$.

\section{Elements of Lie theory, homogeneous spaces and principal bundles}

In late nineteenth century F. Klein (1849 - 1925) brought order out of chaos among new types of geometries by means of the identification of a geometrical object with its group of symmetries which he called \textit{Hauptgruppe} or \textit{principal group}. To each geometry on some set $M$ corresponds the group of transformations acting transitively on it which preserve a class of geometrical features.

In the same period S. Lie (1842-1899), a student of P. L. M. Sylow (1832-1918), made use of the group theory in the study of the differential equations theory following the successful path traced by E. Galois with the theory of algebraic equations. Also W. Killing (1847-1923) independently followed this approach for the non Euclidean geometry. The theory of Lie have been later developed by E. Cartan (1869-1951) with her Ph.D. thesis and H. Weyl (1885-1955). The \textit{Hauptgruppe} introduced by Klein is an example of a Lie group and we will make a wide use of it.\\

Lie theory concerns both differentiable groups and the structure of the tangent space to these groups. The so called three Lie's theorems clear up the connection between a Lie group and a Lie algebra.

\begin{Theorem}
There exists a one-to-one correspondence between the connected subgroups of a Lie group $G$ and the set of the subalgebras of the Lie algebra $\mathfrak{g}$ of $G$.
\end{Theorem}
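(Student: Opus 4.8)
I would prove the statement by constructing two mutually inverse assignments and checking that each is well defined. In one direction, to a connected (immersed) Lie subgroup $H \le G$ I would attach $\mathfrak{h} := T_eH$, viewed as a linear subspace of $T_eG = \mathfrak{g}$; that $\mathfrak{h}$ is a subalgebra I would get from the fact that the inclusion $\iota : H \hookrightarrow G$ is a morphism of Lie groups, so its differential $\mathrm{d}_e\iota$ is a morphism of Lie algebras and its image $\mathfrak{h}$ is closed under the bracket. Concretely this rests on the remarks that every $\xi \in T_eG$ extends uniquely to a left-invariant vector field $X^\xi$ on $G$ (with $X^\xi_g = \mathrm{d}_eL_g(\xi)$, $L_g$ the left translation), that the bracket $\mathcal{L}_XY = [X,Y]$ of the previous section makes the left-invariant fields a Lie subalgebra of $\mathfrak{X}(G)$ isomorphic to $\mathfrak{g}$ via evaluation at $e$, and that the fields tangent to $H$ along $H$ are exactly those whose value at $e$ lies in $\mathfrak{h}$. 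This direction is essentially formal once left-invariance is in place.

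The real content is the other direction. Given a subalgebra $\mathfrak{h} \subseteq \mathfrak{g}$ of dimension $k$, I would fix a basis $\xi_1,\dots,\xi_k$ and form the left-invariant fields $X_i := X^{\xi_i}$; at each $g \in G$ they span a $k$-dimensional subspace $\mathcal{D}_g = \mathrm{d}_eL_g(\mathfrak{h}) \subseteq T_gG$, and since the $X_i$ are smooth and pointwise independent this defines a smooth rank-$k$ distribution $\mathcal{D}$. I would then show $\mathcal{D}$ is \emph{involutive}: because left translations are diffeomorphisms and the bracket is natural under diffeomorphisms, $[X_i,X_j]$ is again left-invariant, hence $[X_i,X_j]_g = \mathrm{d}_eL_g([\xi_i,\xi_j])$, and this lies in $\mathcal{D}_g$ for every $g$ precisely because $[\xi_i,\xi_j] \in \mathfrak{h}$ — this is the one and only place the subalgebra hypothesis enters. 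By the Frobenius integrability theorem $\mathcal{D}$ is the tangent distribution of a foliation of $G$; I would let $H$ be the leaf through $e$, equipped with its intrinsic immersed-submanifold structure, a connected $k$-manifold with $T_eH = \mathcal{D}_e = \mathfrak{h}$.

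Next I would install the group structure on $H$. Since $\mathcal{D}$ is left-invariant, each $L_g$ permutes the leaves; for $h \in H$ the diffeomorphism $L_{h^{-1}}$ sends the leaf through $h$ — which is $H$ itself, as $H$ is the connected leaf containing $h$ — to the leaf through $e$, again $H$, so $h^{-1}H = H$; hence $H$ is a subgroup, and multiplication and inversion are smooth for the leaf topology because they are restrictions of smooth maps on $G$ carrying the foliation to itself. By construction $\mathrm{Lie}(H) = T_eH = \mathfrak{h}$, so building $H$ from $\mathfrak{h}$ and then taking its Lie algebra returns $\mathfrak{h}$. For the reverse composite, if $H'$ is another connected Lie subgroup with $T_eH' = \mathfrak{h}$, then $H'$ is an integral submanifold of $\mathcal{D}$ through $e$, hence lies in the leaf $H$; as a connected topological group is generated by any neighbourhood of its identity and $H$, $H'$ share such a neighbourhood, one concludes $H' = H$.

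The step I expect to be the main obstacle is precisely this last bundle of points — verifying that the abstract Frobenius leaf is honestly a Lie subgroup and that connectedness forces uniqueness. One must keep in mind that $H$ is only immersed, not embedded in general (an irrational line on the torus being the cautionary example), so every argument about the group operations and about uniqueness has to be carried out in the leaf topology rather than the subspace topology, and the uniqueness step genuinely needs the ``generated by a neighbourhood of $e$'' property of connected groups, not embeddedness. The Frobenius theorem I would invoke as a known result; from the earlier part of the paper the essential input is that $\mathfrak{X}(G)$ is a Lie algebra under $\mathcal{L}_XY = [X,Y]$, which is what legitimizes the involutivity computation, together with the naturality of this bracket under diffeomorphisms.
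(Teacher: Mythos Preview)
Your argument is the standard and correct one: attach $T_eH$ to a connected subgroup, and in the reverse direction build a left-invariant distribution from a subalgebra, apply Frobenius, and take the leaf through $e$. The involutivity check, the verification that the leaf is a subgroup via left-invariance of the distribution, and the uniqueness argument using that a connected group is generated by any neighbourhood of the identity are all handled correctly, and your caution about the immersed-versus-embedded distinction is well placed.

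There is nothing to compare against in the paper itself: the theorem is stated there as one of the three classical Lie theorems, quoted as background without proof. Your write-up supplies exactly the argument one would expect a reader to know or look up, and it draws on the paper's prior material (the Lie bracket on $\mathfrak{X}(G)$ and left-invariant fields) in the right places.
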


\begin{Theorem}
Let $G_1$ and $G_2$ be Lie groups and let $\mathfrak{g}_1$ and $\mathfrak{g}_2$ be the associated Lie algebras. Then,  $\mathfrak{g}_1$ and $\mathfrak{g}_2$ are isomorphic if and only if $G_1$ and $G_2$ are locally analytically isomorphic.
\end{Theorem}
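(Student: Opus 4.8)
The plan is to prove the two implications separately: the ``only if'' direction is a routine functoriality computation, while the ``if'' direction is the genuine content and will be obtained by applying the First Theorem to the product group $G_1\times G_2$.

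\emph{The easy direction.} Suppose $\phi$ is a local analytic isomorphism, i.e.\ there are open neighbourhoods $U_i$ of the identity $e_i\in G_i$ and an analytic diffeomorphism $\phi:U_1\to U_2$ with $\phi(e_1)=e_2$ which is multiplicative wherever both sides are defined. Recall that $\mathfrak{g}_i=T_{e_i}G_i$ with the bracket transported from the commutator of left-invariant vector fields. First, $d_{e_1}\phi:\mathfrak{g}_1\to\mathfrak{g}_2$ is a linear isomorphism, its inverse being $d_{e_2}\phi^{-1}$. Next I would show it respects brackets: for $X\in\mathfrak{g}_1$ the left-invariant field $X^L$ on $G_1$ and the left-invariant field $\big(d_{e_1}\phi(X)\big)^L$ on $G_2$ are $\phi$-related on $U_1$, because $\phi$ intertwines the left translations $L_g$ and $L_{\phi(g)}$ where these are defined; since the bracket of $\phi$-related fields is $\phi$-related (a standard consequence of the derivation characterization $[X,Y]f=X(Yf)-Y(Xf)$ recalled above), evaluation at the identity gives $d_{e_1}\phi([X,Y])=[d_{e_1}\phi\,X,\,d_{e_1}\phi\,Y]$. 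Hence $\mathfrak{g}_1\cong\mathfrak{g}_2$ as Lie algebras.

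\emph{The hard direction, via the graph subalgebra.} Given a Lie algebra isomorphism $\alpha:\mathfrak{g}_1\to\mathfrak{g}_2$, form $G:=G_1\times G_2$, a Lie group with Lie algebra $\mathfrak{g}_1\oplus\mathfrak{g}_2$ carrying the componentwise bracket. The graph $\mathfrak{h}:=\{(X,\alpha X):X\in\mathfrak{g}_1\}$ is a vector subspace, and it is a Lie subalgebra \emph{precisely because} $\alpha$ is a homomorphism: $[(X,\alpha X),(Y,\alpha Y)]=([X,Y],[\alpha X,\alpha Y])=([X,Y],\alpha[X,Y])\in\mathfrak{h}$. By the First Theorem there is a connected Lie subgroup $H\le G$ with Lie algebra $\mathfrak{h}$. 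Let $p_i:G\to G_i$ be the projections; their restrictions to $H$ have differential at the identity equal to the restriction to $\mathfrak{h}$ of the coordinate projection $\mathfrak{g}_1\oplus\mathfrak{g}_2\to\mathfrak{g}_i$, and since $\mathfrak{h}$ is the graph of $\alpha$ both of these, namely $(X,\alpha X)\mapsto X$ and $(X,\alpha X)\mapsto\alpha X$, are linear isomorphisms. By the inverse function theorem in the locally analytic category (legitimate over $\mathbb{Q}_p$ as well, since all the maps involved are strictly differentiable), $p_1|_H$ and $p_2|_H$ are analytic diffeomorphisms from a neighbourhood of $e$ in $H$ onto neighbourhoods of $e_1$ and $e_2$. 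Then $\phi:=(p_2|_H)\circ(p_1|_H)^{-1}$ is, on a small enough neighbourhood, an analytic diffeomorphism $G_1\to G_2$ fixing the identity, multiplicative where defined because it is a composite of restrictions of group homomorphisms ($p_1$, $p_2$, and the inclusion $H\hookrightarrow G$); its differential at $e_1$ is $\alpha$. This is the desired local analytic isomorphism.

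\emph{Where the difficulty sits, and an alternative.} The obstacle is not algebraic but consists in invoking the First Theorem responsibly: the subgroup $H$ it provides is a priori only immersed, possibly with a topology finer than the subspace one, so one must work entirely inside a chart around $e$ in which $H$ is an embedded analytic submanifold and the inverse function theorem applies — this is exactly where local analyticity of the manifolds (hence the validity of Schneider's inverse/implicit function theorems over $\mathbb{Q}_p$) is used. An alternative that bypasses the First Theorem is to set $\phi:=\exp_{G_2}\circ\,\alpha\circ\exp_{G_1}^{-1}$ on a neighbourhood of $e_1$ small enough that $\exp_{G_1}$ is an analytic diffeomorphism and the Baker--Campbell--Hausdorff series converges on both groups (in the $p$-adic case this forces a genuinely small neighbourhood, harmless for a local statement); multiplicativity then follows from $\mathrm{BCH}(\alpha X,\alpha Y)=\alpha\big(\mathrm{BCH}(X,Y)\big)$, valid because each term of the BCH series is a universal $\mathbb{Q}$-linear combination of iterated brackets and $\alpha$ preserves brackets. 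Either way, the theorem says that a Lie algebra isomorphism integrates, uniquely near the identity, to a local group isomorphism, the converse differentiation being automatic.
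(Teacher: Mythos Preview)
Your argument is correct and is in fact the standard textbook proof of this classical result (the graph construction in $G_1\times G_2$, followed by the inverse function theorem applied to the projections), with a sound alternative via the Baker--Campbell--Hausdorff formula. There is nothing to compare it against, however: the paper does not prove this theorem. It is merely \emph{stated}, together with the other two of Lie's fundamental theorems, as background material introducing the section on Lie theory; no proof or sketch is offered anywhere in the text. So your proposal supplies something the paper deliberately omits.

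One small remark on presentation: you invoke ``the First Theorem'' to produce the connected subgroup $H$ with Lie algebra $\mathfrak{h}$, and you are right to flag that $H$ may a priori be only immersed. In the analytic (real or $p$-adic) category the cleanest way to stay entirely local --- and thus sidestep any global subtleties about $H$ --- is your BCH alternative, which needs nothing beyond convergence of the series on a small ball and the fact that each BCH term is a Lie polynomial. If you want the graph argument to stand on its own without appealing to the subgroup--subalgebra correspondence, you can replace the global $H$ by its germ: take $H$ to be the image of $\mathfrak{h}$ under $\exp_{G_1\times G_2}$ near $0$, which is an embedded analytic submanifold by the inverse function theorem, and then your projection argument goes through verbatim. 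Either way the proof is complete.
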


\begin{Theorem}[Ado's theorem]
Let $\mathfrak{g}$ be a Lie algebra on the field $K$, $K=\mathbb{R},\mathbb{C}$. Then, there exists an analytic simply connected group whose Lie algebra is isomorphic to $\mathfrak{g}$. 
\end{Theorem}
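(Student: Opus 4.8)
\medskip
\noindent\textbf{Proof strategy.}
The plan is to reduce the statement to the linear case, produce a connected group there by the first of Lie's theorems above, and then repair simple connectedness by passing to a universal cover. First I would invoke the representation-theoretic result classically attached to Ado's name: every finite-dimensional Lie algebra $\mathfrak{g}$ over a field of characteristic zero (in particular $K=\mathbb{R}$ or $\mathbb{C}$) admits a \emph{faithful} finite-dimensional representation, that is, an injective Lie algebra homomorphism $\rho\colon\mathfrak{g}\hookrightarrow\mathfrak{gl}_n(K)$ for some $n$. Identifying $\mathfrak{gl}_n(K)$ with the Lie algebra of the Lie group $\mathrm{GL}_n(K)$ (recalled above as a submanifold of $K^{n^2}$), the image $\rho(\mathfrak{g})$ is a Lie subalgebra of $\mathrm{Lie}(\mathrm{GL}_n(K))$ isomorphic to $\mathfrak{g}$.

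Next I would feed this subalgebra into the correspondence between Lie subalgebras and connected subgroups (the first Lie theorem above) applied to $G=\mathrm{GL}_n(K)$: it yields a connected analytic subgroup $H\le\mathrm{GL}_n(K)$ whose Lie algebra, as a subalgebra of $\mathrm{Lie}(\mathrm{GL}_n(K))$, is exactly $\rho(\mathfrak{g})\cong\mathfrak{g}$. Thus $H$ is already an analytic group with $\mathrm{Lie}(H)\cong\mathfrak{g}$, but it may fail to be simply connected. To fix this I would take the universal covering space $\pi\colon\widetilde{H}\to H$ and choose a point $\tilde e$ over the identity. Since $\widetilde{H}\times\widetilde{H}$ is simply connected, the multiplication and inversion of $H$ lift uniquely through $\pi$, making $\widetilde{H}$ a group for which $\pi$ is a homomorphism; transporting the analytic atlas of $H$ along the local homeomorphism $\pi$ turns $\widetilde{H}$ into an analytic group. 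Because $\pi$ is a local diffeomorphism, $\mathrm{d}_{\tilde e}\pi$ is a linear isomorphism, and being the differential of a group homomorphism it is a Lie algebra isomorphism $\mathrm{Lie}(\widetilde{H})\xrightarrow{\sim}\mathrm{Lie}(H)\cong\mathfrak{g}$. Finally $\widetilde{H}$ is connected (as $H$ is) and simply connected by construction, so it is the group asked for. Uniqueness up to analytic isomorphism, though not asserted, would follow from the second Lie theorem above together with the monodromy principle for simply connected groups.

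The main obstacle is the very first step. Producing a faithful finite-dimensional representation of an arbitrary $\mathfrak{g}$ is \emph{not} formal: one first treats the nilradical using the universal enveloping algebra together with Engel's theorem, then extends the representation across the solvable radical, and finally splices in a Levi complement via the Levi--Malcev decomposition (available in characteristic zero); none of these pieces is a one-line argument and assembling them is the real content. Everything downstream --- the use of the subalgebra correspondence and the covering construction --- is essentially bookkeeping. One could instead avoid Ado altogether by defining a local analytic group on a neighbourhood of $0$ in $\mathfrak{g}$ through the Baker--Campbell--Hausdorff series $X\ast Y=X+Y+\tfrac12[X,Y]+\cdots$ and then invoking the enlargibility of local Lie groups; but proving convergence and associativity of that series, and that a local group globalizes to a simply connected one, costs about as much, so I would keep the representation-theoretic route as the primary plan.
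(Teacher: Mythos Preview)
The paper does not actually supply a proof of this theorem: it is one of three background results from Lie theory stated in the survey portion without argument or reference to a proof, so there is nothing on the paper's side to compare against.

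That said, your outline is the standard one and is sound. You correctly separate the naming issue: what the paper labels ``Ado's theorem'' is really Lie's third (or the Lie--Cartan) theorem on the existence of a group integrating a given Lie algebra, while Ado's theorem proper is the faithful-representation step you invoke first. The chain ``faithful $\rho\colon\mathfrak{g}\hookrightarrow\mathfrak{gl}_n(K)$ $\Rightarrow$ connected analytic subgroup $H\le\mathrm{GL}_n(K)$ via the subgroup--subalgebra correspondence $\Rightarrow$ universal cover $\widetilde{H}$'' is exactly the textbook route, and your remarks about lifting the group law to $\widetilde{H}$ and reading off $\mathrm{Lie}(\widetilde{H})\cong\mathfrak{g}$ from the local diffeomorphism $\pi$ are correct. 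Your honest flag that the representation-theoretic step (nilradical via the enveloping algebra, extension through the radical, Levi splitting) is where the real work hides is accurate; the alternative Baker--Campbell--Hausdorff route you mention is also legitimate and, as you say, of comparable cost. One small caveat: you implicitly assume $\mathfrak{g}$ is finite-dimensional, which the paper's statement does not make explicit but which is of course necessary for Ado's theorem and for everything downstream.
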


Thanks to the work of Klein and Lie among the other, a geometry is therefore completely described by three objects: a manifold $M$, a point and the principal group, a Lie group $G$ which acts smoothly on $M$. Klein's geometry is homogeneous, the elements of $G$ are symmetries and the action is transitive, thus the angles, the lengths, the lines or the collinearity, for instance, are all preserved by the action and the points cannot be distinguished anymore only by geometrical properties.\\

In this section we will redefine the objects we defined in the first part by means of the action of the Lie group of symmetries on the manifold. In particular, the manifold will be identified with a factor set of its Lie group of symmetries and the (canonical) connection will be defined also without metrical considerations on the manifold.

\subsection{Lie group, Lie algebra and the exponential map}\label{casertalie-group}

A \textit{Lie group} $G$ is both a differential manifold and a group such that the product and the inversion are both differentiable, that is, it is a group equipped with a differential structure such that the maps $(a,b)\in G\times G\mapsto ab\in G$ and $a\in G\mapsto a^{-1}\in G$ are both differentiable. A \textit{Lie algebra} $\frak{g}$ is a vector space with a bilinear operator, the \textit{Lie brackets} or \textit{Lie product}, which is nilpotent ($[X,X]=0$) and fulfills the Jacobian identity 
$$
[X,[Y,Z]]+[Y,[Z,X]]+[Z,[X,Y]]=0.
$$
The Lie algebra $\frak{g}$ of the Lie group $G$ is the set of all the \text{left invariant} vector fields  
$$
\frak{g}=\frak{X}(G)^G=\{X\in\frak{X}(M):(\mathrm{d}\lambda_g)_*(X)=X\,\forall g\in G\}
$$
where $\lambda_g$, $g\in G$, are the left translations of $G$, $\mathrm{d}\lambda_g:TG\to TG$, $g\in G$, the differentials
and 
$$
(\mathrm{d}\lambda_g)_*(X)\in\frak{X}(G)\mapsto \mathrm{d}\lambda_g\circ X\circ \lambda_g^{-1}\in\frak{X}(G).
$$
The algebra $\frak{g}$ is a subalgebra of $\frak{X}(G)$ and is isomorphic to $T_eG$ by the isomorphism 
$X\in\frak{g}\mapsto X_e\in T_eG$,  
$e$ being the neutral element of $G$.\\

Let $A\in\mathfrak{g}$ and $\{\phi_t\}$ the group generated by $A$. Since $A$ is left invariant, $\phi_t\circ\lambda_a=\lambda_a\circ\phi_t$, thus, $\phi_t(a)=(\phi_t\circ\lambda_a)(e)=a\phi_t(e)$. By setting $a_t:=\phi_t(e)$ one has 
$$
a_{t+s}=\phi_{t+s}(e)=\phi_t(\phi_s(e))=\phi_t(a_s)=a_s\phi_t(e)=a_sa_t
$$
thus, $\{a_t\}$ is a subgroup of $G$. Define therefore the \textit{exponential function} $\mathrm{exp}:\mathfrak{g}\to G$ by setting $\mathrm{exp}A:=g_1$. Let $k\in\mathbb{R}$ and $B$ the vector field induced by the group $\{\psi_t:=\phi_{kt}\}$, since $B_m=\mathrm{d}/\mathrm{d t}_{t=0}\phi_{kt}(m)=kA_m$, then  
$$
\mathrm{exp}(kA)=\mathrm{exp}(B)=\psi_1(e)=\phi_{k}(e)=a_k.
$$
The exponential function maps $\mathfrak{g}$ onto $G$ homeomorphicay. Moreover, if $\gamma:t\mapsto a_t=\phi_{t}(e)$ is a curve in $G$, then
$$
\dot{a}_k=\mathrm{d}/\mathrm{d}t_{t=0} (a_{t+k})=A_{a_k}=a_k A_e 
$$
hence, $a_t$ is the solution of the differential equation $\dot{a}_k=a_kA_e$.

\subsection{Homogeneous spaces and principal bundle}

Let $G$ be a Lie group, $B$ a differentiable manifold and assume that there exists a left action $(g,m)\in G\times B\mapsto gb\in B$ of $G$ on $B$ as a group of transformations. Also assume that the action is transitive, fix a base point $b_0\in B$ and consider the \textit{isotropy group} (of $b_0$) $H:=G_{b_0}$, the stabilizer of $b_0$ in the action as a group of diffeomorphisms. Denote by $\lambda:G/H\to M$ the map $\lambda:(gH,p)\mapsto g(p)=\rho_p(g)$. The group $H$ is a closed subgroup of $G$, hence a Lie subgroup and a submanifold of $G$. The space $G/H$ has a unique differential structure such that the projection $\pi:G\to G/H$ is smooth (and open) and $G$ acts transitively by left translation on $G/H$. Since the action is transitive, the map $gH\in G/H\mapsto gb_0\in B$ is subjective, in particular, if it is a homomorphism, it is also diffeomorphism.

\subsubsection{Principal bundle}

A fiber bundle $H\to G\overset{\pi}{\to} M$ is a \textit{principal fiber bundle}\index{principal bundle} with structure group $H$ if the Lie group $H$ acts freely on the right on $G$ and $M=G\backslash H$, where $G\backslash H$ is the set of the orbits $pH$, each of them being a fiber through some $p\in G$ over $m=\pi(p)$, that is,  
$$
p\in G_m=pH=\pi^{-1}(m)=\pi^{-1}(\pi(p)).
$$
The action of $H$ is fiber preserving and simply transitive on each fiber, the map $\phi_p:g\in H\to \phi_p(g):=pg\in H_m=pH$ is a diffeomorphism and is $H$-equivariant, that is, $\phi_p(gh)=\phi_p(g)h$ for all $h\in H$. In particular, if $G$ acts on $M$
$$
H\to G\stackrel{\pi}{\rightarrow} G/H\overset{\rho_p}{\cong} M
$$
is a principal bundle with structure group $H$ and fibers $gH$, $g\in G$. For the tangent space we have therefore
$$
T_eH\to T_eG\stackrel{\mathrm{d}_e\pi}{\rightarrow} T_H(G/H)\overset{\mathrm{d}_H\rho_p}{\rightarrow} T_pM.
$$
The tangent space $\mathcal{V}_p=T_p(G_m)$ to the fiber $G_m$ in $p$ is called \textit{vertical space} and the distribution $\mathcal{V}:p\in P\to\mathcal{V}_p\subseteq TP$ is the \textit{vertical distribution}.\\

If $\sigma:U\subseteq G/H\to G$ is a local analytic (cross) section, i.e., $\pi(\sigma(gH))=gH$, then $\sigma(U)$ is a submanifold of $G$ (but not a subgroup) which is diffeomorphic to  $\sigma(U)p\subseteq M$ according to the action of $G$ on $M$. Furthermore, there exists a one-to-one correspondence between the local sections and the local trivializations of the principal bundle, this allow us to identify portions of the manifold $M$ on which $G$ acts not only with the set of classes but also with the subsets of $G$. In particular, if $\sigma:U\to G$ is a local section, then
$$
\varphi:g\in\pi^{-1}(U)\subseteq G\to (gH,\sigma(gH)^{-1}g)\in U\times H
$$
is the associated local trivialization (a diffeomorphism), indeed
$$
\varphi(\pi^{-1}(U))=U\times H\to \sigma(U)\times H\subseteq M\times H
$$
with $\varphi(\sigma(gH))\to (\sigma(gH),1)\in \sigma(U)\times\{e\}$.\\

Let $\sigma:U\subseteq G/H\to G$ be a local section with $\sigma(H)=e$ and $L:=\sigma(U)$ (in particular $L\cap H=\{e\}$). If $\gamma:I\to M$ is a curve with $\gamma(0)=e$, then, for all $t\in I$ there exists $\tilde{\gamma}:I\subseteq\mathbb{R}\to G/H$ such that $\tilde{\gamma}(t)p=\gamma(t)$. Then 
$$
\mathrm{d}_e\rho_p|_{T_eL}:A=[\gamma]\in T_eL\leq T_eG\to Ap=[\gamma p]\in T_pM
$$ 
is a diffeomorphism. One says that $T_eG$ is \textit{reductive}, i.e., 
$$
T_eG=T_eL\oplus T_eH,
$$
$T_eH$ being the kernel of $\mathrm{d}_e\pi:T_eG\to T_H(G/H)$, if and only if $T_eL$ is $\mathrm{Ad}_H$-invariant, i.e., $L$ is $\mathrm{ad}_H$-invariant. More generally, given a homogeneous space $M=G/H$ and the Lie algebras $\mathfrak{g},\mathfrak{h}$ of $G$ and $H$ respectively, $\mathfrak{g}$ is called \textit{reductive}\index{reductive space} if there exists a vector space complement $\mathfrak{m}$ of $\mathfrak{h}$ in $\mathfrak{g}$, i.e., $\mathfrak{g}=\mathfrak{m}\oplus\mathfrak{h}$, which is $\mathrm{Ad}H$-invariant. We can identify $\mathfrak{m}$ with $T_pM$ since $\mathrm{ker}\,\mathrm{d}_e\pi=\mathfrak{h}$, whence $\mathrm{d}_e\pi(\mathfrak{g})=\mathfrak{m}$. Sufficient conditions for the space to be reductive are
\begin{enumerate}
\item the linear space $\mathrm{Ad}_{\mathfrak{g}}(H)$ is completely reducible,
\item $\mathfrak{g}$ has an $\mathrm{Ad}_{\mathfrak{g}}(H)$-invariant bilinear form which is not degenerating on $\mathfrak{h}$, f.i, the Killing bilinear form $K(u,v):=\mathrm{tr}(\mathrm{ad}_u\mathrm{ad}_v)$, $u,v\in M_n(K)\equiv\mathfrak{g}$.
\end{enumerate}

\subsubsection{Tangent bundle as principal bundle}

Let $TM=\dot{\cup}_{q\in M} T_qM$ be the tangent bundle of $M$. Since $\lambda_g:q\in M\mapsto gq\in M$, the group $G$ acts on $TM$ by $\mathrm{d}\lambda_g$. Consider the map
$$
(g, [\gamma])\in G\times T_pM\mapsto \mathrm{d}_p\lambda_g([\gamma])=[g\gamma]\in T_{gp}M\subseteq TM\mapsto gp\in M 
$$
then, $(g_1, [\gamma_1]),(g_2,[\gamma_2])$ have the same image, if and only if $g_2^{-1}g_1\in H$. Since $\gamma(t)=a_tp$ (the horizontal lift)
$$
g_2^{-1}g_1[\gamma_1]=[g_2^{-1}g_1a_{1,t}h_tp]=[\mathrm{ad}_{g_2^{-1}g_1}(a_{1,t})p]=\mathrm{Ad}_{g_2^{-1}g_1}([\gamma_1])=[\gamma_2]
$$
hence the class $[g,[\gamma]]$ is the set
$$
[g,[\gamma]]=\{(gh,\mathrm{Ad}_{h^{-1}}([\gamma]): h\in H\}.
$$
We can define therefore the \textit{associated bundle} $E=G\times_H T_eL$ as $G\times T_eL$ modulo the action
$$
(g,A)h:=(gh^{-1},\mathrm{Ad}_h(A)),\, h\in H
$$
with the projection $\pi_E:(g,A)H\in E=G\times_H T_eL\to gp\in M$. This bundle is itself a principal bundle with fiber
$$
\pi_E^{-1}(q=gp)=\{(g,A)h:h\in H,\, A\in T_eL\}=gH\times _H T_eL\equiv (g(p),T_pM)
$$
and structure group $H$.\\

Finally, if $H\to G\overset{\pi}{\to} M=G/H$ is a principal bundle, then the tangent bundle $TM$ is diffeomorphic to the associate bundle $G\times_H\mathfrak{m}$ by
$$
[(g,X)]\in G\times_H\mathfrak{m}\mapsto(\mathrm{d}_e\pi\circ\mathrm{d}\lambda_g)(X)\in T(M).
$$
where the quotient $G\times_H\mathfrak{m}$ is with respect to the action
$$
h:(g,X)\mapsto(gh^{-1},\mathrm{Ad}_h(X)).
$$

\section{Connection, parallel transport, covariant derivative and exponential map}
The two ingredients one needs to define the composition law given by Kikkawa are the exponential map and the parallel transport of a vector (field) along a curve: the first allows us to sum points of a space by means of vectors in the tangent bundle, the latter makes it possible to control somehow the variation of a vector field with respect to some constant field. Therefore we need a way to derive the vector fields in the tangent bundles in order to compute their variations, that is, in some sense, to ascertain what fields are constant. One of the tool we know already is the Lie derivative. However, it is not just a function of the vector field to be derived but also of the vector field with respect of which one derives. We need instead a tensorial derivative: such a derivative is the so called \textit{covariant derivative} and it will allow us to define the classes of parallel vectors and the \textit{parallel transport} of a vector along a curve. This covariant derivative is defined by means of the so called \textit{connection}, which we will define below as linear map and later by means of the algebraic split of the Lie algebra of the manifold. 

\subsection{Affine and canonical connections}
We begin by defining an affine connection, \textit{affine} because, under given conditions, the space of all connections is actually an affine space. Connections, covariant derivative and parallel transport are equivalent ideas: one can define one out of the three and deduce the other two.

\subsubsection{Affine connection}
An \textit{affine connection} is a sort of collection of covariant derivatives, it is a mapping $\nabla$ 
$$
\nabla:X\in \mathfrak{X}(M)\mapsto \nabla_X\in\mathrm{Hom}(\frak{X}(M),\frak{X}(M))
$$
which maps a vector field $X$ to a linear operator $\nabla_X$
such that
\begin{enumerate}
\item $\nabla_XY$ is $\mathbb{R}$-linear on $X$ and $Y$,
\item $\nabla$ is tensorial in $X$, $\nabla_{fX}Y=f\nabla_XY$,
\item $\nabla_XY$ is a derivation in $Y$, $\nabla_X(fY)=(Xf)Y+f\nabla_XY$ and
\item $\nabla_{fX+gY}=f\nabla_X+g\nabla_Y$,
\end{enumerate}
$f$ being smooth. A connection can be expressed locally by the so called \textit{Christoffel symbols}, that is, a set of maps $\Gamma_{ij}^k\in\mathcal{C}^\infty(U)$ which fulfill
$$
\nabla_{\vettore{\varphi}_i}\,\vettore{\varphi}_j=\sum_{k=1}^n\Gamma_{ij}^k \vettore{\varphi}_k,\quad i,j=1,2,\ldots, n,
$$
where $\{\vettore{\varphi}_i\}$ are the elementary vector fields.\\

As an example of connection we give account of that one  induced by a metric on the manifold and afterwards that one induced by a horizontal and vertical distribution.

\textbf{Connection induced by a metric}
A Riemannian metric on differentiable manifold $M$ is a section $g$ of the fiber bundle of the symmetric bilinear forms defined on $TM$ such that the metric $g_m$ in $m\in M$ is a definite positive form for all $m\in M$. Such a metric is a tensor which maps a couple of vectors to a number, a tensor $g\in\mathcal{C}^\infty(M,T^0_2)$, $0$-times contravariant and $2$-times covariant, which fulfills
\begin{enumerate}
\item $g(X,Y)=g(Y,X)$ for all $X,Y\in\frak{X}(M)$,
\item $g(X,X)(m)\geq 0$ for all $X\in\frak{X}(M)$ with $X(m)\ne 0$,
\end{enumerate}
where $g(X,Y)(m):=g_m(X_m,Y_m)$. In local coordinates, setting
$$
g_{ij}(m):=g\left(\dfrac{\partial}{\partial \varphi_i},\dfrac{\partial}{\partial \varphi_j}\right)(m)=g_m\left(\dfrac{\partial}{\partial \varphi_i}\bigg|_m,\dfrac{\partial}{\partial \varphi_j}\bigg|_m\right)
$$
we have
$$
g=\sum_{ij} g_{ij}\mathrm{d}\varphi^i\otimes\mathrm{d}\varphi^j,
$$
that is, for the fields $X=\sum X(\varphi_i)\partial/\partial \varphi_i$ and $Y=\sum Y(\varphi_i)\partial/\partial \varphi_i$, one has
\begin{multline*}
g(X,Y)(m)=\sum_{ij} g_{ij}(m)\mathrm{d}_m\varphi^i\otimes\mathrm{d}_m\varphi^j
\left(X(\varphi_i)(m)\dfrac{\partial}{\partial \varphi_i}\bigg|_m,Y(\varphi_j)(m)\dfrac{\partial}{\partial \varphi_j}\bigg|_m\right)=\\
=\sum_{ij} g_{ij}(m)X(\varphi_i)(m)Y(\varphi_j)(m).
\end{multline*}

The fundamental theorem of Riemannian geometry states that there exists a unique torsion free connection which preserves the metric (the \textit{Levi-Civita} affine connection), that is, 
\begin{enumerate}
\item $\nabla g = 0$ and 
\item $\nabla_XY - \nabla_YX = [X,Y]$, for all vector fields $X$  and $Y$.
\end{enumerate}

\textbf{From affine connection to covariant derivative}
If $X,Y$ are fields with $X=\sum_{i=1}^n X\varphi_i\vettore{\varphi}_i$ and $Y=\sum_{i=1}^n Y\varphi_i\vettore{\varphi}_i$ , then
$$
\nabla_XY=\nabla_X\left(\sum_i Y\varphi_j\dfrac{\partial}{\partial\varphi_j}\right)=
\sum_{i=1}^n X(Y\varphi_i)\dfrac{\partial}{\partial\varphi_j}+Y\varphi_j\nabla_X\left(\dfrac{\partial}{\partial\varphi_j}\right),
$$
where
$$
\nabla_X\left(\dfrac{\partial}{\partial\varphi_j}\right)=\nabla_{\sum_iX\varphi_i\partial/\partial\varphi_i}\left(\dfrac{\partial}{\partial\varphi_j}\right)=\sum_{i=1}^n (X\varphi_i)\nabla_{{\partial}/{\partial\varphi_i}}\left(\dfrac{\partial}{\partial\varphi_j}\right),
$$
whence
$$
\nabla_XY=\sum_{k=1}^n\left(X(Y\varphi_k)+\sum_{i,j}\Gamma_{ij}^k(X\varphi_i)(Y\varphi_j)\right)\dfrac{\partial}{\partial\varphi_k}.
$$
Since for a field $X=\sum_i X\varphi_i\,\vettore{\varphi}_i$ and an integral curve $\gamma$ through $m=\gamma(0)$ one has 
\begin{multline*}
X_m=X(m)=\sum_i (X\varphi_i)(m)\,\vettore{\varphi}_i(m)=\sum_i\dfrac{\mathrm{d}(\varphi_i\circ\gamma)}{\mathrm{d}t}\bigg|_{t=0}[\varphi^{-1}(\underline{e}_it+\varphi(m))]=\\
=\sum_i\dot{\gamma}_\varphi(0)_i[\varphi^{-1}(\underline{e}_it+\varphi(m))]=\dot{\gamma}(0)
\end{multline*}
we can compute
$$
\nabla_XY(m)=\sum_{k=1}^n\left(\dfrac{\mathrm{d}((Y\varphi_k)\circ\gamma)}{\mathrm{d}t}\bigg|_{t=0}+\sum_{i,j}\Gamma_{ij}^k\dot{\gamma}_\varphi(0)_i(Y\varphi_j)(m)\right)[\varphi^{-1}(\underline{e}_kt+\varphi(m))]
$$
Finally, by taking the vector field $\tilde{Y}(t)$ along $\gamma$, $\tilde{Y}(t)=Y(\gamma(t))=\sum a_i(\gamma(t))\vettore{\varphi}_i(\gamma(t))=\sum Y(t)\tilde{\varphi}_i(t)$, we have the covariant differential 
$$
\nabla_{\dot{\gamma}}Y=\sum_{k=1}^n\left(\dot{Y}_k+\sum_{i,j}\Gamma_{ij}^k\dot{\gamma}_i Y_j\right)\vettore{\varphi}_k
$$
that is
$$
\tilde{Y}(t)=Y_m+\nabla_{\dot{\gamma}(0)}\tilde{Y}t+o(t).
$$

\textbf{From covariant derivative to parallel transport}
We say that a vector field $\vettore{u}=Y$ is parallel along the integral curve $\gamma$ of the vector field $\vettore{v}$ if $\nabla_{\dot{\gamma}}Y=0$ for all $t$ and we say that $\vettore{u}$ is parallel with respect to $\vettore{v}$ if $\nabla_{\vettore{v}}\vettore{u}=0$. Therefore, a curve $\gamma$ is a geodesic if $\nabla_{\dot{\gamma}}\dot{\gamma}=0$, that is 
$$
\nabla_{\dot{\gamma}}\dot{\gamma}=\sum_{k=1}^n\left(\dot{\gamma}_k+\sum_{i,j}\Gamma_{ij}^k\dot{\gamma}_i \dot{\gamma}_j\right)\vettore{\varphi}_k=0.
$$

If $p,q\in M$ and $\underline{v}\in T_pM$ a fixed vector, then there exists a unique geodesic $\gamma$ with $\gamma(0)=p$ and $\dot{\gamma}(0)=\underline{v}$ and a unique parallel field $Y$ with respect to  $\nabla_X$ such that $Y(p)=v$. The map $v\mapsto Y(\gamma(t))$ is a the parallel transport from $p$ to $\gamma(t)$ along $\gamma$ and it is also a vector space isomorphism.

\textbf{From parallel transport to covariant derivative}
For each curve $\gamma$, the set of isomorphisms 
$$
\tau_{\gamma,s,t}:T_{\gamma(s)}M\to T_{\gamma(t)}M
$$
are called \textit{parallel transport} if they fulfill
\begin{enumerate}
\item $\tau_{\gamma, s,t}$ is smooth with respect to both $s$ and $t$ (and $\gamma$),
\item $\tau_{\gamma, s,s}=\mathrm{id}$ and
\item $\tau_{\gamma, u,t}\circ\tau_{\gamma, s,u}=\tau_{\gamma, s,t}$
\end{enumerate}
If such a parallel transport is given, then also the so called \textit{covariant derivative} $\nabla$ in $p$ along $\gamma$ of the field $\vettore{u}$ can be defined by
$$
\nabla_{\dot{\gamma}(0)}\vettore{u}=\underset{t\to 0}{\mathrm{lim}}\dfrac{\tau_{\gamma,0,t}^{-1}\left(\vettore{u}_{\gamma(t)}\right)-\vettore{u}_m}{t}\in T_mM,\quad m=\gamma(0).
$$
We can also define the geodesic curves associated to the parallel transport as the class of curves which satisfy $\tau_{\gamma,s,t}(\dot{\gamma}(s))=\dot{\gamma}(t)$ and also the associated field is given by  
$$
\nabla_{\vettore{v}}\, \vettore{u}:m\mapsto (\nabla_{\vettore{v}}\, \vettore{u})(m)=\nabla_{\vettore{v}_m}\, \vettore{u}=\nabla_{\dot{\gamma}(0)}\, \vettore{u},
$$
where $\gamma$ is a geodesic such that $\dot{\gamma}(0)=\vettore{v}(m)$. 

\subsection{Canonical connection on a principal bundle} 

Let $G\to P\overset{\pi}{\to} M$ be a principal bundle. A \textit{connection} on $P$ is a distribution $\mathcal{H}:p\in P\to T(P)$, the horizontal distribution, of the subspaces $\mathcal{H}_p\leq T_p(P)$, the horizontal spaces, such that
\begin{itemize}
\item $T_p(P)=\mathcal{V}_p\oplus\mathcal{H}_p$ and
\item $(\rho_g)_*(\mathcal{H}_p)=\mathcal{H}_{pg}$ for all $g\in G$.
\end{itemize}

A \textit{canonical connection}\index{canonical connection} is given on $M$ by defining the vertical and horizontal distribution as following. The vertical space in $g\in G$  is the tangent space of the fiber $gH$, that is
$$
\mathcal{V}_g:=T_g(gH)=\mathrm{d}_e\lambda_g(T_eH)
$$
and the horizontal space is
$$
\mathcal{H}_g:=\mathrm{d}_e\lambda_g(T_eL)=T_g(gL).
$$
$L$ being a set of representatives of $G/H$. For $[\gamma]=[ga_t]\in\mathcal{H}_{g}$, one has
$$
\mathrm{d}_g\rho_h([\gamma])=[ga_th])=[gha_t^{h^{-1}}]\in T_{gh}gh\,\mathrm{ad}_{h^{-1}}(L)=T_{gh}(ghL)\in\mathcal{H}_{gh}
$$
hence  the required condition $\mathrm{d}_g\rho_h(\mathcal{H}_g)=\mathcal{H}_{gh}$ is fulfilled.\\

We can extend the connection to $TM$ by mean of the associated bundle $E=G\times_H T_eL$ on $M$ with fibers $\pi_E^{-1}(q=gp)=gH\times T_eL$ and structure group $H$
$$
H\times_H T_eL\to E=G\times_H T_eL\stackrel{\pi}{\rightarrow} M.
$$
The vertical space in $w=(g,A)H\in E$ is the tangent space to the fiber though $w$ and the horizontal space is defined as $\mathcal{H}_{(g,A)H}=gH\times_H T_eL$, that is
$$
\mathcal{H}_{(g,A)H}=\{(g_1,A_1)H:g_1\in G,A_1\in \mathcal{H}_g\}.
$$

\subsubsection{Parallel displacement and covariant derivative}
Let $\gamma$ be a differentiable curve in the base manifold $M$ and let $\gamma^*$ be a horizontal lift of $\gamma$ to $P$, that is, a curve in $P$ such that $\pi\circ\gamma^*=\gamma$ and $\dot{\gamma}(t)\in\mathcal{H}_{\gamma(t)}$, which is unique if we fix $p\in P$ such that $\pi(p)=\gamma(0)$. The horizontal lift $\gamma^*$ is the \textit{parallel displacement} of the point $\gamma(0)=x_0$ in the fiber $\pi^{-1}(m_0)$ to the point $\gamma(1)=x_1$ in the fiber $\pi^{-1}(m_1)$ along the curve $\gamma$. We can proceed in a same way with vector fields. Let $X$ be a vector field on $M$, there exists a unique horizontal lift $X^*$ of $X$ in $P$, that is, $\tilde{\pi}(X^*_u)=X_{\pi(u)}$ where $\tilde{\pi}=\mathrm{d}_u\pi:T_u(P)\to T_{\pi(u)}(M)$ with $\tilde{\pi}|_{\mathcal{H}_u}:\mathcal{H}_u\to T_{\pi(u)}(M)$ being an isomorphism.\\ 

Once a connection is given, we can define the \textit{parallel displacement} of the fibers $gH$. If $\gamma:I\to M$ is a smooth curve, for all $y\in \pi^{-1}(\gamma(0))$, there exists a unique horizontal lift $\gamma^*_y:I\to G$ of $\gamma$ through $y$, i.e., 
$$
\pi(\gamma^*_y(t))=\gamma^*_y(t)p=\gamma(t),\, [\gamma]\in T_eL.
$$
Actually, if $\gamma:I\to M$, since the bundle is locally trivial, there exist $a_t\in L$ with $a_0h_0=y$ such that $\gamma(t)=a_th_0
p$, thus, $\gamma^*_y(t)=a_th_0$ is horizontal and $\pi(\gamma^*_y(t))=a_tH=\gamma(t)$. 
Define the parallel displacement along $\gamma$ by 
$$
\tau_{0,t}:y=a_0h_0\in \pi^{-1}(a_0H)\mapsto a_th_0\in \pi^{-1}(a_tH).
$$
We can also define the parallel displacement in the tangent bundle. Let $\gamma:I\to M$, $\gamma(0)=gp$, and let $\gamma^*=ga_t$ be the horizontal lift to $G$, thus, for all $A\in T_eL$, $(ga_t,A)H$ is a horizontal lift to $E$ (cfr. \cite{kobayashi1963foundations} page 114). The parallel transport
$$
\tau_{t,t+\varepsilon}:\pi_E^{-1}(\gamma(t+\varepsilon))\to \pi_E^{-1}(\gamma(t))
$$
is therefore 
$$
\tau_{t,t+\varepsilon}:(ga_{t+\varepsilon},A)H\in \pi_E^{-1}(\gamma(t+\varepsilon))\mapsto(ga_t,A)H\in\pi_E^{-1}(\gamma(t)).
$$

\textbf{Covariant derivative}
Let $X:E\to M$ be a section, i.e., $\pi_E(X_{\gamma(t)})=\gamma(t)$ (a vector field) and let $\gamma:I\to M$. Define the covariant derivative $\nabla_{\dot{\gamma}(t)}X\in \pi_E^{-1}(\gamma(t))$ of $X$ in the direction (with respect to) $\dot{\gamma}$ by
$$
\nabla_{\dot{\gamma}(t)}X:=\underset{h\to 0}{\mathrm{lim}}\dfrac{\tau_{t,t+\varepsilon}\left(X_{\gamma(t+\varepsilon)}\right)-X_{\gamma(t)}}{\varepsilon}
$$
and the curve $X_{\gamma(t)}$ in $E$ is said to be parallel if $\nabla_{\dot{\gamma}(t)}X=0$ (see \cite{kobayashi1963foundations} page   124), in particular
$$
\nabla_{\dot{a}_t}\dot{a}_t=\underset{h\to 0}{\mathrm{lim}}\dfrac{\tau_{t,t+\varepsilon}\left(X_{\gamma(t+\varepsilon)}\right)-X_{\gamma(t)}}{\varepsilon}=0.
$$ 

\subsubsection{Canonical connection for reductive space}

Let $M$ be homogeneous and reductive. The \textit{canonical connection} on $M$ is the connection on the principal bundle $H\to G\overset{\pi}{\to} G/H\cong M$ associated to the distributions $\mathcal{H}_g:=(\lambda_g)(\mathfrak{m})$ and  $\mathcal{V}_g:=(\lambda_g)(\mathfrak{h})$, $g\in G$, which are both provided by the following construction.

Since the projection $\pi:G\to G/H$ is smooth, the differential (the total derivative of $\pi$ or pushforward) $\pi_*=\mathrm{d}_p\pi:T_pG\to T_{pH}(G/H)$ is linear. One defines the vertical space $V_pG=\mathrm{ker}\mathrm{d}_p\pi$. Let $\Phi:(g,h)\in G\times H\to \Phi_h(g)=gh$ be the right action of the bundle, a connection on the principal bundle is therefore given by the (smooth) distributions $V_pG,H_pG$ which satisfy
\begin{itemize}
\item $T_pG=V_pG\oplus H_pG$, equivalently, $\pi_*(H_pG)=T_{\pi(p)}G$,
\item $(\Phi_h)_*(H_pG)=H_{\Phi_h(p)}G$
\end{itemize}

As an example of a homogeneous reductive space we have the \textit{symmetric space} for a connected Lie group $G$, the homogeneous space $G/H$ where $H$ is the stabilizer of a point and an open subgroup of $G^\sigma=\{g\in G:\sigma(g)=g\}$, $\sigma$ being an involution. The differential $\mathrm{d}_e\sigma$ of $\sigma$ in $e\in G$ is an involutive endomorphism of the Lie algebra $\mathfrak{g}$, thus, if $\lambda$ is an eigenvalue, then $\mathrm{d}_e\sigma(v)=\lambda v$ implies $v=\mathrm{d}_e\sigma^2(v)=\lambda\mathrm{d}_e\sigma(v)=\lambda^2v$, hence, $\lambda=\pm 1$. The eigenspaces are therefore $\mathfrak{h}$, the Lie algebra of $H\leq G^\sigma$, which is stable under $\mathrm{d}_e\sigma$ and the eigenspace of $-1$, which is the complement $\mathfrak{m}$ of $\mathfrak{h}$ in $\mathfrak{g}$, $\mathfrak{g}=\mathfrak{h}\oplus\mathfrak{m}$, with $[\mathfrak{h},\mathfrak{h}]\subseteq\mathfrak{h}$, $[\mathfrak{h},\mathfrak{m}]\subseteq\mathfrak{m}$ and $[\mathfrak{m},\mathfrak{m}]\subseteq\mathfrak{h}$.

\subsection{Exponential map} We give now account of the other ingredient of Kikkawa's composition law, the exponential map from the tangent space to the manifold.\\

Let $M$ be a manifold and $p\in M$ a fixed point. There exists a neighborhood $N_{\underline{0}} (p)$ of $\underline{0}$ in $T_pM$ and a neighborhood $U(p)$ of $p$ such that for all $\underline{v}\in T_pM$ 
\begin{itemize}
\item the geodesic $\gamma_{\underline{v}}$ is defined on some set $J_{\underline{v}}\supseteq [0,1]$, 
\item $\gamma_{\underline{v}}(1)\in U(p)$ and
\item $\mathrm{exp}_p:\underline{v}\in N_{\underline{0}} (p)\mapsto\gamma_{\underline{v}}(1)\in U(p)$ is a diffeomorphism.
\end{itemize}
Consider $\mathrm{exp}:p\in M\mapsto\mathrm{exp}_p$ as a function of $p\in M$. Fixing $p\in M$, a star-shaped neighbourhood $N_{\underline{0}}(p)$ of $\underline{0}\in T_pM$where $\mathrm{exp}_p$ is a diffeomorphism onto an open subset $N_p$ of $M$ is called \textit{normal} and $N_p=\mathrm{exp}_p(N_0(p))$ is called \emph{normal neighbourhood} of $p$. Assume that each point has a normal neighbourhood, which is also normal for all other points of the set. If furthermore, for $q_1,q_2\in N_p$, there exists a unique geodesic $\gamma:[0,1]\to N_p$ with $\gamma(0)=q_1$ and $\gamma(1)=q_2$, then $N_p$ is called \textit{convex}, \textit{simple} if the geodesic is unique. One can show that in a manifold with an affine connection each point has a system of simple, connected and open neighborhoods. Furthermore, the exponential map is differentiable and $\mathrm{d}_0\mathrm{exp}_p$ is ``almost'' the parallel transport.

\section{Differential loop of Lie type}
Loop theory is relatively young and covers various areas of mathematics such geometric algebra, topology and combinatorics\footnote{We recommend two very interesting works, Historical notes on loop theory by H. O. Pflugfelder, \cite{pflugfelder2000historical} and Smooth Quasigroups and Loops forty-five years of incredible growth by L. V. Sabinin, \cite{sabinin2000smooth}, which provide detailed accounts of the progress of the loop theory during the XX century.}.

The statement which introduces a loop as an algebraic structure in the simplest way is ``a loop is a group without associativity'', which is also true but not completely. The theoretical core of loops and quasigroups theory is of course the non-associativity, which geometrically often arises in non-Euclidean context. As a first approach to the theory, the two works \textit{Moufang loops and Bol loops: Zur Struktur von Alternativkoerpern} by Ruth Moufang (1935)\footnote{R. Moufang (1905-1977) had studied at the University of Frankfurt, and later in K{\"o}nigsberg, where she was strongly influenced by Reidemeister.}, and \textit{Gewebe und Gruppen} by Gerrit Bol (1937) are always a good starting point; according to Pflugfelder, they ``marked the formal beginning of loop theory''.

Beside the term quasigroup, the word \textit{loop} designates those quasigroups with an identity. It seems that the invention of ``loops'' occurred around 1942 and apparently is due to the School of Chicago, where ``loop'' has been coined after the Chicago Loop, the elevated train over the main business area that actually forms a loop shaped path. The first publications introducing the term ``loop'' were the papers by Albert in 1943, \textit{Quasigroups I} and \textit{Quasigroups II} and afterwards the two publications by R. H. Bruck \textit{Some results in the Theory of Quasigroups} (1944) and \textit{Contributions to the Theory of Loops} (1946).
According to Pflugfelder, R. Baer and M. Hall were the main authors in the branch of geometry related to loops in the United States during the 1940s, both making use of the F. Klein's group theoretical approach.\\

In 1964 M. Kikkawa (see \cite{kikkawa1964local}) proved that it is always possible to define a loop operation among the points of manifold  with an affine connection within a suitable neighborhood of any fixed point, the so-called geodesic loops or local loop. This non-associative operation is the generalization of the applied vectors sum on a Euclidean affine plane, the parallelogram law, which is of course associative and commutative, to a Riemannian surface. Indeed, if we move from a flat plane to a non-flat manifold, it is well-known that the curvature of the space may causes a vector displacement after a parallel transport of a vector along a closed path. We will illustrate how to construct a loop of Kikkawa type on a $p$-adic $2$-sphere (for the real case see for instance \cite{nesterov2000non}).

\subsection{Loops and local loops} 
There are at least two different but related ways to build a (left) loop, we will give account here of the construction by means of (Lie) groups and transversals. In this sense we follow a flow which began with Baer \cite{baer1939nets} and involves many authors such as Karzel, Kreuzer, Strambach, Wefelscheid\footnote{see \cite{karzel1993groups}, \cite{kikkawa1975geometryofhomogeneous}, \cite{kikkawa1964local}, \cite{kikkawa2004kikkawa}, \cite{kikkawa1975geometryofhomogeneous}, \cite{kreuzer1994k},\cite{kreuzer1998inner}, \cite{nagy1994loops}}, Kikkawa, Sabinin, Ungar\footnote{see \cite{chein1990quasigroupsand}, \cite{friedman1994gyrosemidirect}, \cite{ungar1994holomorphic}}, Drapal, Kepka, Niemenmaa and Phillips\footnote{see \cite{drapal1993multiplication}, \cite{drapal1994multiplication}, \cite{kepka1993loops}, \cite{kepka1997connected}, \cite{niemenmaa1995loops}, \cite{niemenmaa1997connected}, \cite{niemenmaa1990multiplication}, \cite{niemenmaa1994connected}, \cite{phillips1999quasiprimitivity}, see also \cite{lgtlt}, pp. 21-22, \cite{bruck1966asurvey}, \cite{chein1990quasigroupsand}, \cite{foguel2000groupstransversal}, \cite{kiechle2002theoryofk}
\cite{lal1996transversalsin}, \cite{niemenmaa1990onmultiplication}}

\subsubsection{Quasigroups and loops}
A \textit{quasigroup} $(Q,\oplus)$, following to the definition of Moufang's paper\footnote{Actually, Moufang defined the quasigroups which today have her name, which are quasigroup satisfying any one of the Moufang identities. Moufang also proves that Q is diassociative, that is, the subquasigroup generated by any two elements is associative.}
, is an algebraic structure which consists of a non empty set $Q$ and a (non-associative) binary operation such that both the equations $a\oplus x = b$, $x\oplus a = b$ have a unique solution in $Q$. We term a quasigroup \textit{loop} if the quasigroup has a neutral element $e$, i.e. an element which satisfies both identities $a\oplus e = e\oplus a = a$. We have a left-loop (right-loop) in case that the operation has a neutral element and only the equation $a\oplus x=b$ ($x\oplus a=b$) has a unique solution.\\

For a given quasigroup $(Q,\oplus)$, the bijective maps $\lambda_a:x\mapsto a\oplus x$, $\rho_b:x\mapsto x\oplus b$, $a,b,x\in L$, are called \textit{left} and \textit{right translation} respectively. If $(Q,\oplus)$ is a left-loop, then only the left translations are bijective, the group $G(Q)$ generated by them is called \textit{enveloping group} of the left-loop and the stabilizers in $G(Q)$ of the neutral element $e$ with respect to the natural action of $G(Q)$ on $Q$ is generated by the maps $\delta_{a,b}:=\lambda_{a\oplus b}^{-1}\lambda_a\lambda_b$, $a,b\in Q$, which are called \textit{left inner deviations}.\\

A set $L$ endowed with a binary operation and a distinguished element $e\in L$ is called \textit{loop} if both left and right equations $a\cdot x=b$, $y\cdot a=b$ have a unique pair of solutions $x,y\in L$ for all $a,b\in L$ and $e$ is the neutral element. \\

Given a group $G$, a non-normal subgroup $H$ and a
left-transversal $L$ containing the identity of $G$, it is
elementary to see that $L$ is endowed with the structure of a
left-loop, denoted (even in the non-commutative case) by $(L,+)$.
Conversely, any left-loop can be identified with a
left-transversal of the group generated by the left translations
$x\mapsto a+x$ modulo the stabilizer of the identity. According to a consolidated notation (see \cite{lgtlt} page 17), we call a map $\sigma:G/H\to G$ a \textit{section}, if $(\pi\circ\sigma)(gH)=gH$ for all $g\in G$, where $\pi$ is the canonical projection. It follows that $\sigma(G/H)$ is a left-transversal, hence we consider the well known correspondence between left-loops and triples
$\left(G,H,\sigma(G/H)\right)$, where $\sigma$ is a section satisfying $\sigma(H)=1$. Without any great loss of
generality, one can assume that $G$ is generated by
$L:=\sigma(G/H)$ and that the action of $G$ on the homogeneous
space $G/H$, $aH\mapsto gaH$, is faithful. In this case the
\emph{enveloping group} $G$ decomposes as a quasi semi-direct
product $G=LH$, where for $a_i\in L$ and $h_i\in H$ $(i=1,2)$ the
multiplication in $G$ is defined by
$$
a_1h_1\,a_2h_2:=a_1+\delta_{h_1}(a_2)\,
\left(a_1,\delta_{h_1}(a_2)\right)\delta_{h_1}(a_2)^{-1}a_2^{h_1}h_1h_2
$$
where $\left(a_1,\delta_{h_1}(a_2)\right), \delta_{h_1}(a_2)^{-1}a_2^{h_1}\in H$ and $\delta_h:L\to L$ satisfies $\delta_h(x+y)=\delta_h(x)+\delta_{\mu(h)}(y)$ for a suitable map $\mu:H\to H$. If $\delta_h$ is also an automorphism of $L$, the left-loop $L$ is said to be an \emph{$A_l$-left-loop}.\\

 Actually, a left-transversal $L$ is a loop
precisely in the case where it is simultaneously a left-transversal
of all the homogeneous spaces $G/H^g$, $g\in G$, as proved by Baer
in \cite{baer1939nets} (cf. also \cite{lgtlt}, Proposition 1.6, page 18).\\

\begin{Proposition}\label{casertaG e G(L)}
For any faithful left-envelope $(G,H,L)$, there exists
an isomorphism $\lambda:G\to G(L)$, which maps $H$ onto $H(L)$ such that the restriction of $\lambda$ on $L$ onto $T(L)$ is a left-loop isomorphism. Furthermore, setting $\lambda_g:=\lambda(g)$ for all $g\in G$, one has $\{\lambda_h(a)\}=a^hH\cap L$ for all $h\in H$ and $a\in L$. 
\end{Proposition}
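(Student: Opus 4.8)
The plan is to realize $G(L)$ concretely as the image of $G$ under its (faithful) permutation action on the left transversal $L$, and then to read off all three assertions by unwinding definitions. Since $L$ is a left transversal of $H$ in $G$, the assignment $gH\mapsto\ell(gH)$, where $\ell(gH)$ denotes the unique element of $gH\cap L$, is a bijection $G/H\to L$; note $\ell(H)=e$, the neutral element of $(L,+)$, and by the very definition of the transversal loop operation $a+x=\ell(axH)$ for all $a,x\in L$. Transporting the left action of $G$ on $G/H$ through this bijection gives an action of $G$ on $L$, namely $g\star x:=\ell(gxH)$, hence a homomorphism $\lambda\colon G\to\mathrm{Sym}(L)$ with $\lambda(g)\colon x\mapsto g\star x$. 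The first point I would record is that $\lambda(a)=\lambda_a$ for every $a\in L$: indeed $\lambda(a)(x)=\ell(axH)=a+x=\lambda_a(x)$, so $\lambda$ carries each transversal element to the corresponding left translation of the loop.

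Next I would show that $\lambda$ is an isomorphism onto $G(L)$. Injectivity is exactly the hypothesis that the action of $G$ on $G/H$ be faithful. Surjectivity onto $G(L)$ uses the remaining part of the definition of a faithful left-envelope, namely $G=\langle L\rangle$: since $\lambda$ is a homomorphism, $\lambda(G)=\langle\lambda(a):a\in L\rangle=\langle\lambda_a:a\in L\rangle=G(L)$, the last equality being the definition of the enveloping group of the left-loop. Hence $\lambda\colon G\to G(L)$ is an isomorphism.

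I would then identify the images of $H$ and of $L$. The base point $H$ of $G/H$ corresponds to $e\in L$, and $H=\{g\in G:gH=H\}$ is precisely the stabilizer of that point; as $\lambda$ is a bijective homomorphism and $H(L)$ is by definition the stabilizer of $e$ in $G(L)$, it follows that $\lambda(H)=H(L)$. For $L$: writing $g\in G$ uniquely as $g=a\,h$ with $a\in L$, $h\in H$, we get $\lambda(g)=\lambda_a\,\lambda(h)$ with $\lambda(h)\in H(L)$, so $\lambda(L)=\{\lambda_a:a\in L\}$ is the canonical transversal $T(L)$ of $H(L)$ in $G(L)$ and $\lambda|_L\colon L\to T(L)$ is a bijection. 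That $\lambda|_L$ is a left-loop isomorphism is then formal: given $a,b\in L$ write $ab=(a+b)h$ in $G$ with $h\in H$; applying $\lambda$ yields $\lambda_a\lambda_b=\lambda_{a+b}\lambda(h)$ with $\lambda(h)\in H(L)$, so $\lambda_{a+b}$ is the transversal representative of $\lambda_a\lambda_b\,H(L)$, i.e. the loop product of $\lambda_a$ and $\lambda_b$ in $T(L)$ equals $\lambda_{a+b}=\lambda(a+b)$.

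For the final assertion, set $\lambda_g:=\lambda(g)$. For $h\in H$ and $a\in L$ the construction gives directly $\lambda_h(a)=h\star a=\ell(haH)$, the unique element of $L$ lying in the coset $haH$; since $haH=(hah^{-1})H=a^hH$, this is precisely the statement $\{\lambda_h(a)\}=a^hH\cap L$. There is no real obstacle here — the proposition is essentially a matter of tracing through definitions — but the one step that must be handled rather than merely asserted is the combination $\lambda(a)=\lambda_a$ with $G=\langle L\rangle$, which is what forces $\lambda(G)$ to be exactly the enveloping group $G(L)$ (neither a proper subgroup nor an overgroup); likewise one must first pin down that, on the $G(L)$-side, $H(L)$ and $T(L)$ denote the stabilizer of $e$ and the canonical transversal respectively, before the statement even has content. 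Everything else is bookkeeping with cosets.
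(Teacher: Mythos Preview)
The paper states this proposition without proof, treating it as a standard reconstruction result (the surrounding discussion cites \cite{lgtlt}, \cite{aschbacher2005onbolloops}, and \cite{Nguyen2003onlooopsandevelopes} for the envelope/folder formalism). Your argument is correct and is exactly the natural one: transport the left $G$-action on $G/H$ to $L$ via the transversal bijection, observe that on elements of $L$ this action reproduces the loop left translations, use faithfulness for injectivity and $G=\langle L\rangle$ for surjectivity onto $G(L)$, and then read off $\lambda(H)=H(L)$, $\lambda(L)=T(L)$, and the coset formula directly. One small remark: in the last line you use the convention $a^h=hah^{-1}$; make sure this matches the convention in force (the paper does not spell it out explicitly), otherwise the formula should read $\{\lambda_h(a)\}={}^{h}aH\cap L$ rather than $a^hH\cap L$.
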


Given a left-enveloper $(G,H,L)$ the left-transversal $L$ is therefore a left-loop with respect the operation defined on it by $a+b:=c$ for $a,b\in L$, $c$ being the representative in $L$ of the class $abH$.

Two left-folders $(G_1,H_1,L_1)$,
$(G_2,H_2,L_2)$ are homomorphic, if there exists a group
homomorphism $\varphi:G_1\to G_2$ which maps $H_1$ onto $H_2$ and
$L_1$ onto $L_2$. In this case, the map $\varphi$ induces a left-loop
homomorphism ${\varphi}_{|L_1}:L_1\to L_2$. Conversely, every
left-loop homomorphism $\varphi:L_1\to L_2$ induces a homomorphism
$\varphi':G(L_1)\to G(L_2)$ which maps $H(L_1)$ onto $H(L_2)$ and
every group homomorphism $\varphi:G_1\to G_2$ which maps $H_1\leq
G_1$ onto $H_2\leq G_2$ induces a homomorphism
$\varphi':(G_1,H_1,L_1)\to \left(G_2,H_2,\varphi(L_1)\right)$ (see
\cite{Nguyen2003onlooopsandevelopes}). This proves the following Proposition, which can be found in \cite{aschbacher2005onbolloops} and in Theorem 1.11 \cite{lgtlt}, pages 21-22. with a formulation in term of transversals and sections.

\begin{Proposition}\label{casertaepimorphism} 
If $(G, H, L)$ is a left-folder, then $(G(L), H(L),T(L))$ is a faithful left-envelope. Furthermore, the function
$\lambda:\langle L\rangle\to G(L)$ which maps $a\in L$ to
$\lambda_a\in T(L)$ is a left-folder epimorphism from $(\langle
L\rangle, \langle L\rangle\cap H, L)$ onto $(G(L), H(L),
T(L))$.
\end{Proposition}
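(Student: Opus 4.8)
The plan is to realise $G(L)$ as the image of the natural permutation action of $\langle L\rangle$, and then to check that the transversal identification carries the structural data of $(\langle L\rangle,\langle L\rangle\cap H,L)$ onto that of $(G(L),H(L),T(L))$. First I would check that $L$, being a left-transversal of $H$ in $G$ and contained in $\langle L\rangle$, is also a left-transversal of $H':=\langle L\rangle\cap H$ in $\langle L\rangle$: intersecting $G=\bigsqcup_{a\in L}aH$ with $\langle L\rangle$ and using $a\in\langle L\rangle$ gives $\langle L\rangle=\bigsqcup_{a\in L}aH'$. Moreover, for $a,b\in L$ one has $ab\in(a+b)H$ with $a+b\in L\subseteq\langle L\rangle$, whence $ab\in(a+b)H'$, so the representative in $L$ of $abH'$ is again $a+b$; thus $(\langle L\rangle,H',L)$ is a left-folder whose associated left-loop is exactly $(L,+)$.

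Next I would let $\langle L\rangle$ act on the left on $\langle L\rangle/H'$ and transport this action along the transversal bijection $\langle L\rangle/H'\to L$, $gH'\mapsto$ (the representative of $gH'$ in $L$); this produces a group homomorphism $\lambda:\langle L\rangle\to\mathrm{Sym}(L)$ (which, being defined as an action, is automatically well defined on all of $\langle L\rangle$). From the definition of $+$ one reads off that for $a\in L$ the permutation $\lambda(a)$ is $x\mapsto a+x$, i.e.\ $\lambda(a)=\lambda_a$; hence $\lambda(\langle L\rangle)=\langle\lambda_a:a\in L\rangle=G(L)$ by definition of the enveloping group, and co-restricting gives the epimorphism $\lambda:\langle L\rangle\to G(L)$ of the statement. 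The remaining data then transfer easily: $\lambda(L)=\{\lambda_a:a\in L\}=T(L)$, and $a\mapsto\lambda_a$ is injective since $\lambda_a(e)=a$; any $h\in H'$ fixes $eH'$, so $\lambda(h)$ fixes $e\in L$ and thus lies in $\mathrm{Stab}_{G(L)}(e)=H(L)$; conversely each generator $\delta_{a,b}=\lambda_{a+b}^{-1}\lambda_a\lambda_b$ of $H(L)$ equals $\lambda\big((a+b)^{-1}ab\big)$, and $(a+b)^{-1}ab$ lies in $H'$ because $ab\in(a+b)H$ while $a,b,a+b\in L\subseteq\langle L\rangle$. Therefore $\lambda(H')=H(L)$, and $\lambda$ is a left-folder epimorphism $(\langle L\rangle,H',L)\to(G(L),H(L),T(L))$.

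It then remains to see that the target triple is a faithful left-envelope. By construction $G(L)$ is a subgroup of $\mathrm{Sym}(L)$, so its action on $L$ is faithful; it is transitive since $\lambda_a(e)=a$ for every $a\in L$, with point-stabiliser $H(L)$, so $G(L)/H(L)\cong L$ as $G(L)$-sets and in particular the action of $G(L)$ on $G(L)/H(L)$ is faithful. Under this identification $T(L)$ maps bijectively onto $L$ and contains $\lambda_e=\mathrm{id}$, hence is a left-transversal of $H(L)$ in $G(L)$; finally $G(L)=\langle T(L)\rangle$ by definition. So $(G(L),H(L),T(L))$ is a faithful left-envelope, and via $\lambda|_L$ one simultaneously recovers the left-loop isomorphism $L\cong T(L)$ in the style of Proposition~\ref{casertaG e G(L)}.

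The only genuinely delicate point — everything else being coset bookkeeping — is the pair of facts $\lambda(a)=\lambda_a$ and $\lambda(H')=H(L)$: one must recognise that passing to the faithful permutation image of $\langle L\rangle$ disturbs neither $L$ nor its induced loop, while it collapses $H'$ precisely onto the deviation group generated by the $\delta_{a,b}$. Relative to Proposition~\ref{casertaG e G(L)}, the new content is exactly that faithfulness of the action of $\langle L\rangle$ on $\langle L\rangle/H'$ is not needed: when it fails, $\lambda$ is merely surjective rather than an isomorphism, its kernel being the kernel of that action.
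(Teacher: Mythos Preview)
Your argument is correct. The paper does not give a self-contained proof of this proposition: the paragraph preceding it records the general correspondence between left-folder homomorphisms and left-loop homomorphisms (with a citation to \cite{Nguyen2003onlooopsandevelopes}), and then declares that ``this proves the following Proposition'', deferring the details to \cite{aschbacher2005onbolloops} and Theorem~1.11 of \cite{lgtlt}. Your approach is the same in spirit---realise $\lambda$ as the permutation representation of $\langle L\rangle$ on its cosets modulo $\langle L\rangle\cap H$, identified with $L$ via the transversal---but you carry it out explicitly rather than invoking the abstract functoriality statements.

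One minor remark: your verification that $\lambda(H')\supseteq H(L)$ via the generators $\delta_{a,b}$ relies on the paper's earlier assertion that $H(L)$ is generated by these deviations. A slightly cleaner route, independent of that fact, is to use surjectivity of $\lambda$ together with the transversal decompositions $\langle L\rangle=L\cdot H'$ and $G(L)=T(L)\cdot H(L)$: given $h\in H(L)$, pick $g\in\langle L\rangle$ with $\lambda(g)=h$, write $g=ah'$ with $a\in L$, $h'\in H'$; then $\lambda_a=h\,\lambda(h')^{-1}\in H(L)$ forces $a=\lambda_a(e)=e$, hence $h=\lambda(h')\in\lambda(H')$. Either way the conclusion stands.
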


According to the above Proposition \ref{casertaepimorphism}, one can make no theoretical difference between left-loops and left-envelopes. Nevertheless, since many left-loop categories are characterized by properties of the own enveloping group, the use of envelopes mostly simplify the notation.

\subsubsection{Local loop: Kikkawa's geodesic loop}
Let $M$ be a manifold with an affine connection, let $m\in M$ be a fixed point and let $T_pM$ be the tangent space to $M$ in $p$. The exponential map $\exp_p:N_0\subseteq T_p\to M$ maps a star-shaped neighborhood $N_0$ of the null vector onto a  normal neighborhood $U_m$ of $m$. Assume that in such a neighborhood any two points can be joint by a geodesic. Let $\tau_{p,q}:T_pM\to T_qM$ be the parallel transport with respect to the connection and, for a vector $X_m\in T_m M$, let $X(t):=\tau_{m,\gamma(t)}(X_m)$ be the parallel transport of $X_p$ along the curve $\gamma$. If $\Gamma_{ij}^k$ are the components of the connection, then
$$
(\nabla_{\dot{\gamma(s)}}X(s))_k=\dfrac{d}{dt}X_k(t)+\sum_{ij}\Gamma^k_{ij}(\gamma(s))\dfrac{d}{dt}\gamma_i(t)X_j(t),
$$
thus, if $a$ is any point in $U_m$, then there exists a geodesic arc $\gamma$ which is the solution of the previous differential equation with initial conditions $X(0)=\dot{\gamma}(0)=a$. If the exponential map is defined, then $\gamma(t)=\exp_m tX_a$ for some vector $X_a$. The operation
$$
a\oplus b:=(\exp_b\circ\tau_{m,a}\circ\exp_m^{-1})(a)
$$
gives a point which is not always contained $U_m$, but, if it is the case, the resulting point $a\oplus b$ is unique. The equation $x\oplus a=b$ has always a solution in $U_m$ and it is also unique, while $y\oplus x=b$ may not have one, but, once more, if it has one solution in $U_m$, it is also unique, since the Jacobian $\partial y^i/\partial z_j=\delta_{ij}$ does not vanishes in $m$. 

The right deviations leave $m$ invariant, thus the differential of them are automorphisms of the tangent space $T_m$ in $m$, whence the following two Propositions of Kikkawa.

\begin{Proposition}
If $dR_b^{-1}\tau_{b,ab} dR_b$ does not depend on $b$, then $dH$ is precisely the holonomy group at $p$.  
\end{Proposition}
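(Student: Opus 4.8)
The plan is to extract from the hypothesis a single intertwining identity for parallel transport, and then to match, factor by factor, the differential at $p$ of a right deviation with the parallel transport around a geodesic triangle based at $p$. First I would pin down what the hypothesis says. In the operator $dR_b^{-1}\,\tau_{b,a\oplus b}\,dR_b$ the two differentials sit at different points: since $p$ is the identity and $R_b(p)=b$, the inner $dR_b$ is $d(R_b)_p\colon T_pM\to T_bM$, while $R_b(a)=a\oplus b$ forces the outer one to be $d(R_b)_a\colon T_aM\to T_{a\oplus b}M$. As $R_p=\mathrm{id}$, evaluating this $b$-independent operator at $b=p$ gives $\tau_{p,a}$; hence for all $a,b$ in the normal neighbourhood
$$
\tau_{b,\,a\oplus b}=d(R_b)_a\circ\tau_{p,a}\circ d(R_b)_p^{-1}.\qquad(\star)
$$
Geometrically $(\star)$ says that each right translation $R_b$ carries the geodesic from $p$ to $a$ onto the geodesic from $b$ to $a\oplus b$ and intertwines the parallel transports along them, i.e. it is an affine transformation; but it is the equation $(\star)$ that the rest of the argument actually uses. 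I would also record what comes for free from the construction of $\oplus$ together with $d_0\exp_q=\mathrm{id}$: for every $c$ one has $d(R_c)_p=\tau_{p,c}$.

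Next I would compute $d(\rho_{a,b})_p$ for a right deviation $\rho_{a,b}=R_{a\oplus b}^{-1}\circ R_b\circ R_a$; these generate $H$, they all fix $p$, so $\phi\mapsto d\phi_p$ is a homomorphism of $H$ onto $dH$. Tracking $p\mapsto a\mapsto a\oplus b\mapsto p$ and applying the chain rule,
$$
d(\rho_{a,b})_p=d(R_{a\oplus b})_p^{-1}\circ d(R_b)_a\circ d(R_a)_p=\tau_{a\oplus b,\,p}\circ d(R_b)_a\circ\tau_{p,a},
$$
the outer factors already being pure parallel transport by the previous paragraph. Substituting $(\star)$, i.e. $d(R_b)_a=\tau_{b,\,a\oplus b}\circ\tau_{p,b}\circ\tau_{a,p}$, and cancelling $\tau_{a,p}\circ\tau_{p,a}=\mathrm{id}$, I obtain
$$
d(\rho_{a,b})_p=\tau_{a\oplus b,\,p}\circ\tau_{b,\,a\oplus b}\circ\tau_{p,b},
$$
which is precisely the holonomy around the geodesic triangle $p\to b\to(a\oplus b)\to p$. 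Hence every generator of $dH$ lies in $\mathrm{Hol}_p$, so $dH\subseteq\mathrm{Hol}_p$. For the opposite inclusion I would read this identity backwards: given a geodesic triangle $p\to q\to r\to p$ inside the normal neighbourhood, solve $a\oplus q=r$ for $a$ — this is the equation $x\oplus q=r$, which, as recalled above, has a unique solution in $U_p$ — and then the last display with $b:=q$, $a\oplus b=r$ identifies the holonomy of that triangle with $d(\rho_{a,q})_p\in dH$; the parallel transports on the two sides coincide because in a convex normal neighbourhood the geodesic joining two vertices is unique. Since $\mathrm{Hol}_p$ (within the normal neighbourhood) is generated by such geodesic-triangle transports, this gives $\mathrm{Hol}_p\subseteq dH$, and therefore $dH=\mathrm{Hol}_p$.

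The step I expect to be the main obstacle is $(\star)$ itself: passing rigorously from ``$dR_b^{-1}\tau_{b,a\oplus b}dR_b$ is independent of $b$'' to the clean intertwining relation, and then combining it correctly with $d(R_c)_p=\tau_{p,c}$ so that all the intermediate differentials telescope. Without the hypothesis the factor $d(R_b)_a$ is a genuine Jacobi-field operator along the edge $b\to(a\oplus b)$ — it differs from $\tau_{b,\,a\oplus b}$ by curvature terms — so the product does not collapse to a loop transport and the conclusion fails; the hypothesis is precisely the device that removes those terms. A lesser point, worth stating explicitly in the $p$-adic setting, is the reduction of an arbitrary element of $\mathrm{Hol}_p$ to a composition of geodesic-triangle holonomies: routine in the real case by subdivision, and available here because $\exp_p$ is a diffeomorphism on the normal neighbourhood, so that loops can be contracted radially through geodesic triangles.
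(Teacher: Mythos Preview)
The paper does not prove this Proposition: it is quoted, together with the companion statement about reductive homogeneous spaces, as one of ``the following two Propositions of Kikkawa'' and no argument is supplied. So there is no proof in the paper to compare your proposal against.

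That said, your argument is essentially the one Kikkawa gives in \cite{kikkawa1964local}. The identification $d(R_c)_p=\tau_{p,c}$ from $a\oplus b=\exp_b\circ\tau_{p,b}\circ\exp_p^{-1}(a)$ and $d_0\exp_q=\mathrm{id}$, the intertwining relation $(\star)$ obtained by evaluating the $b$-independent operator at $b=p$, and the resulting collapse of $d(\rho_{a,b})_p$ to the holonomy around the geodesic triangle $p\to b\to (a\oplus b)\to p$ are exactly the core steps. Your converse, solving $x\oplus q=r$ to realise a prescribed triangle, is also Kikkawa's. The one place where you are looser than the original is the final clause: reducing an arbitrary element of $\mathrm{Hol}_p$ to products of geodesic-triangle holonomies. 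In Kikkawa's paper this is handled by working from the outset with the holonomy group of the restricted normal neighbourhood (cf.\ the wording of the next Proposition in the present paper), where subdivision into geodesic triangles is immediate; you should state explicitly that ``holonomy group at $p$'' here means this local object, otherwise the inclusion $\mathrm{Hol}_p\subseteq dH$ is not justified by triangle generation alone.
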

 
\begin{Proposition}
At the point $p = \pi(e)$ of a reductive homogeneous space $M=
G/H$ with a canonical affine connection, the group of linear transformations of $T_p$ induced by the right inner maps of a differentiable local loop coincides with the local holonomy group defined on a restricted normal neighborhood $U_p$ of $p$.
\end{Proposition}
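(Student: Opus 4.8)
The plan is to verify, in the reductive homogeneous setting, the hypothesis of the immediately preceding Proposition and then to read off its conclusion, while keeping track of the difference between the holonomy group ``at $p$'' and the \emph{local} holonomy group on a restricted normal neighbourhood. First I would record the facts that make the computation tractable. Since $M=G/H$ is reductive, with $\mathfrak g=\mathfrak m\oplus\mathfrak h$ and $\mathrm{Ad}(H)\mathfrak m\subseteq\mathfrak m$, the geodesics of the canonical connection through $p=\pi(e)$ are exactly the curves $t\mapsto\exp_G(tX)\,p$ with $X\in\mathfrak m$, and parallel transport along such a geodesic is the restriction to the horizontal distribution of $\mathrm{d}\lambda_{\exp_G(tX)}$, because the horizontal spaces of the canonical connection are the $G$-translates $\mathcal H_g=\mathrm{d}\lambda_g(\mathfrak m)$. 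Consequently each right translation $x\mapsto x\oplus a$ of Kikkawa's geodesic loop, being assembled from $\exp_p^{-1}$, a parallel transport, and an $\exp$, can be rewritten entirely in terms of the action of $G$ and the isotropy twist $\mathrm{Ad}(H)|_{\mathfrak m}$.

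The central step is then to show that the expression $\mathrm{d}R_b^{-1}\,\tau_{b,ab}\,\mathrm{d}R_b$ occurring in the preceding Proposition does not depend on $b$. For this I would use transitivity: choose $g\in G$ with $g\,b=p$; since $g$ acts as an automorphism of the canonical connection it intertwines parallel transports, $\mathrm{d}g\circ\tau_{b,ab}=\tau_{gb,\,g(ab)}\circ\mathrm{d}g$, and, because $\exp$ and the right translations of the loop are $G$-equivariant up to the $\mathrm{Ad}(H)$-twist, conjugation by $\mathrm{d}R_b$ absorbs exactly the $g$-dependent part. What survives depends only on $a$ and on the geodesic triangle with one vertex at $p$, so the hypothesis of the preceding Proposition is satisfied. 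It then delivers that $\mathrm{d}H$, the group of differentials at $p$ of the right inner maps of the local loop, is precisely the holonomy group at $p$.

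Finally I would reconcile this with the statement about the local holonomy group on a restricted normal neighbourhood $U_p$. Shrinking $U_p$ so that it is simple and convex (possible by the discussion of normal neighbourhoods above), every right inner map of the loop is built from geodesics and parallel transports that stay inside $U_p$, so $\mathrm{d}H$ is contained in the holonomy generated by loops in $U_p$; conversely, in such a neighbourhood every loop based at $p$ is homotopic to a product of geodesic triangles lying in $U_p$, and the holonomy around each such triangle is realised by a right inner map through the formula obtained in the first step. Hence the two groups coincide, which is the assertion.

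The hard part will be the $b$-independence in the central step: it is precisely here that reductivity (the existence of an $\mathrm{Ad}(H)$-invariant complement $\mathfrak m$) and the canonical nature of the connection (curvature and torsion are parallel, so that parallel transport commutes with the $G$-action) are indispensable, and the bookkeeping of how $\exp$, parallel transport, the $G$-action and the isotropy twist $\mathrm{Ad}(H)|_{\mathfrak m}$ fit together must be carried out carefully. A secondary, essentially routine subtlety is the passage from the holonomy group at $p$ to the local holonomy group on $U_p$, which requires choosing $U_p$ convex and triangulating loops by geodesic triangles.
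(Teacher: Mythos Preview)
The paper does not prove this Proposition. It is stated, together with the preceding one, as a result of Kikkawa (the paper writes ``whence the following two Propositions of Kikkawa'' and cites \cite{kikkawa1964local}), and no argument is supplied beyond the one-line remark that the right deviations fix the base point so their differentials act on $T_mM$. There is therefore no proof in the paper to compare your proposal against.

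That said, your outline follows the natural route and is broadly in the spirit of Kikkawa's original argument: reduce to the hypothesis of the preceding Proposition by exploiting that, for the canonical connection on a reductive space, geodesics through $p$ are orbits $t\mapsto\exp_G(tX)\cdot p$ with $X\in\mathfrak m$ and parallel transport along them is given by $\mathrm d\lambda_{\exp_G(tX)}$. One caution on your central step: the $G$-action does not preserve the loop product (only $H$, via $\mathrm{Ad}(H)|_{\mathfrak m}$, acts by loop automorphisms), so the phrase ``the right translations of the loop are $G$-equivariant up to the $\mathrm{Ad}(H)$-twist'' needs to be made precise; the $b$-independence of $\mathrm dR_b^{-1}\,\tau_{b,ab}\,\mathrm dR_b$ really comes from writing everything in terms of $\exp_G$ and $\mathrm{Ad}$ and checking directly that the $b$-dependent factors cancel. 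Your final paragraph on passing from the holonomy at $p$ to the local holonomy on a simple convex $U_p$ via geodesic triangles is the standard reduction and is fine as stated.
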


The previous Proposition is particularly interesting from a geometrical point view. It states that the non-associativity of the loop and the holonomy group of the manifold carry actually the same information. If the holonomy group were trivial, then the loop would be associative, that is, a non-flat surface produces a non-associative loop of symmetries.

\subsection{The sphere}
We give account of some elementary consideration about the real sphere in order to generalize them by means of Lie theory and export this pattern to the $p$-adic case. The real sphere is of course the set 
$$
S^2=\{(x,y,z)\in\mathbb{R}^3:x^2+y^2+z^2=1\}
$$
that is, the locus of all points $(x,y,z)\in\mathbb{R}^3$ which have unitary distance from the origin. It is worth to emphasizes that such identification holds because the Euclidean metric can be defined by a scalar product, unfortunately, a similar argument does not make sense in the $p$-adic case, because there is no relationship between ultrametric and scalar product. \\

Denote by $N=(0,0,1)$ be the north pole of the sphere, the tangent space $T_N(S^2)$ in $N$ to $S^2$ is a $2$ dimensional real vector space, indeed, if $\gamma:I\to S^2\subseteq\mathbb{R}^3(t)$ is a curve in $S^2$ through $\gamma(0)=N$ with $\gamma(t)=(x(t),y(t),z(t))$, then $x(t)^2+y(t)^2+z(t)^2=1$ implies $\gamma(t)\cdot\dot{\gamma}(t)=0$, hence $0=N\cdot\dot{\gamma}(0)=\dot{z}(0)=0$, thus
$$
T_N(S^2)=\langle e_1,e_2\rangle.
$$
If $R_{u,\theta}:S^2\to S^2$ is the rotation which takes $N$ to $P=(x_p,y_p,z_p)\in S^2$ and $\Omega$ is the center of the sphere, then $u=(-y_p,x_p,0)$ and the rotation is $R_{u,\theta}=R_{z,\varphi}R_{y,\theta}$, where $\cos\varphi=x_p/\|\vettore{\Omega P}\|$ and $\cos\theta=z_p$. Thus, the differential $\mathrm{d}_NR_{u,\theta}$ of $R_{u,\theta}$ in $N$ is the linear map between the tangent spaces $T_N(S^2)$ and $T_P(S^2)$
$$
\mathrm{d}_NR_{u,\theta}=(R_{u,\theta})_*:v\in T_N(S^2)\mapsto \mathrm{d}_NR_{u,\theta}(v)\in T_P(S^2).
$$
If $[\gamma]\in T_N(S^2)$ is a tangent vector, then $
\mathrm{d}_NR_{u,\theta}([\gamma])=[(R_{u,\theta}\circ\gamma)(t))]$, that is, 
$$
\mathrm{d}/\mathrm{dt}|_{t=0}(R_{u,\theta}\circ\gamma)(t))=\mathrm{d}/\mathrm{dt}|_{t=0}(R_{u,\theta}\gamma(t))=R_{u,\theta}\dot{\gamma}(0),
$$
whence
$$
\mathrm{d}_NR_{u,\theta}(v^t)=R_{u,\theta}v^t\text{ and } T_P(S^2)=R_{u,\theta}T_N(S^2).
$$
According to the definition given in Part 1., the tangent bundle is therefore
$$
TS^2=\{R_{u,\theta}(N,v):v\in T_N(S^2), R_{u,\theta}\in\mathrm{SO}_3(\mathbb{R}) \},
$$
in particular
$$
[\exp{\alpha A_{\hat{u}}}=I+\alpha A_{\hat{u}}+o(\alpha^2)],\quad {\hat{u}}=x,y,z.
$$ 
The connection on $S^2$ is induce by the metric tensor, the geodesics are the maximal circles and the parallel transport is made by a simple a rotation with the axis through the center. Since the parallel transport along a geodesic $\gamma$ through some point $p$ preserves the angle formed by a  tangent vector at $p$ and the tangent vector to $\gamma$ at $p$, we can express the non-associative operation in terms of a rotation, that is, $a\oplus b=R_{\widehat{u},\theta}(b)$, where $\theta$ is the longitude of $a$ and $\widehat{u}$ is the normal component of the Frenet frame of the geodesic through $N$ and $a$ with origin in $a$. The M\"obius transformations of the Gaussian plane $\mathbb{C}$ can be used to describe a rotation of the Riemannian sphere, thanks to its differential structure. Thus, taking the stereographic projection from the south pole in Cartesian coordinates, $\pi_s:(x,y,z)\in U_s\mapsto{x+iy\over 1+z}$, one has
$$
R_{\widehat{u},\theta}\big((x,y,z)\big)=\pi_s^{-1}\left(
\dfrac{\pi_s\big((x,y,z)\big)+\zeta}{1-\overline{\zeta}\pi_s((x,y,z))}\right)
$$
with $\zeta=\mathrm{tan}(\theta/2)e^{i\varphi}$ and $\widehat{u}=\pi_s^{-1}(ie^{i\varphi})$. Up to the identification of the points of $U_s$ with their complex images under $\pi_s$, the non-associative operation takes the form
$$
a\oplus b:=\dfrac{a+b}{-\overline{a}b+1}
$$
for all $(a,b)$ in some subset $V\subseteq\mathbb{C}^2$. Nevertheless, $P_\infty\oplus a$ remains undefined for $a\not=0,P_\infty$.

\subsubsection{Orthogonal geometry in $Q_p$}
So far we have considered the sphere as a subspace of $\mathbb{R}^3$, we would like to generalize the previous considerations and make them intrinsic. We begin by introducing a representation for the group $\mathrm{SO}_3(K)$ which can be used both in real and $p$-adic case.\\

We recall that for a quadratic $K$-space $(\mathcal{V},q)$, $\mathrm{char} K\ne 2$, the orthogonal group is the group of the automorphisms which leave the scalar product invariant, hence it is isomorphic to the group of all matrices $M\in\mathrm{GL}_n(K)$ such that $MBM^t=B$, $B=(\underline{v}_i\cdot\underline{v}_j)$ for a basis $\{\underline{v}_i\}$, for which one has $\mathrm{det}\,M^2=1$. One calls a \textit{rotation} those matrices which have determinant equal to $1$ (the special orthogonal group $SO(\mathcal{V})$) and \textit{reflexion} the others. For a non isotropic vector $\underline{u}$, i.e. $q(\underline{u})\ne 0$, the map 
$$
\sigma_{\underline{u}}(\underline{v}):=\underline{v}-2\dfrac{\underline{v}\cdot\underline{u}}{Q(\underline{u})}\underline{u}
$$
is a reflexion along $\underline{u}$ or through the hyperplane $\underline{u}^\perp$. In particular, in the basis $\langle\underline{u}\rangle\perp\langle\underline{u}\rangle^\perp$, $\sigma_{\underline{u}}$ is represented by $\mathrm{diag}(-1,1,\ldots,1)$, which has determinant equal to $-1$. As a consequence of Cartan-Dieudonn\'e\footnote{Cfr. f.i. Theorem 6.6 in \cite{grove2002classical} page 48.} each orthogonal transformation in a $n$-dimensional space is the product of $n$ or fewer reflexions, thus, in a $3$-dimensional space, each rotation has a non isotropic axis, being the product of $2$ reflexion and, unless the rotation is trivial, there exists a unique fixed vector $\underline{v}$ which is the generator of two reflexion (hyper)planes, $W_1\cap W_2=\langle\underline{v}\rangle$. If the axis $\underline{v}$ were isotropic, then both planes $W_1,W_2$ would contain an isotropic vector, hence they were hyperbolic and the rotation restricted to $W_1$ and $W_2$ would be the identity\footnote{Cfr. \cite{grove2002classical} exercise 2 page 40.} hence the identity on the whole space $\mathcal{V}=W_1+W_2$ (\footnote{For a general reference about orthogonal geometry\label{casertaorthogonalGeometry} see \cite{diarra1994padicclifford}, \cite{grove2002classical}, \cite{micali1992clifford}.}).\\

Consider the case $\mathcal{V}=Q_p^3$. The field $Q_p$ of $p$-adic numbers can be algebraically extended to each degree, thus, any finite dimensional $p$-adic vector space is an algebraic extension of the base field $Q_p$. Of course,
the geometry of the space changes according to the prime number $p$, more precisely, the geometry depends on which roots lie in $Q_p$. For a finite extension $Q_p\leq K$ of degree $n$, call $e=[|K^*|:|Q_p^*|]$ the \textit{ramification index} and set $f=[k:\mathbb{Z}_p]$, whence $n=ef$. One can prove that there exists exactly one unramified extension of $Q_p$ of degree $f$ which can be obtained by adding to $Q_p$ a primitive $p^f-1$-th root of unity\footnote{Cfr. \cite{koblitzpadic1984} page  67.}. Recall that the field $Q_p$ has a root of $-1$ if and only if $p\equiv 1$ modulo $4$. For our purpose, we will consider the case $p\equiv 3$ modulo $4$, whence $p>2$ and $Q_p(i)$, $i^2=-1$, is an unramified extension over $Q_p$. \\

The quadratic space $(Q_p^3,q)$ with respect to the quadratic form $q(x,y,z)= a_1x^2+a_2y^2+a_3z^2$ with $a_1,a_2,a_3\in Z_p^*$ is isotropic\footnote{Cfr. theorem 3.5.1 page 60 and corollary 3.5.2. page 62 in \cite{kitaoka1999arithmetic}, the more general case only requires the condition $p>2$.}, the form $q(x,y,z)=a_1x^2+a_2y^2+a_3z^2$ is not degenerate since $\mathrm{det}\,\mathrm{diag}(a_1,a_2,a_3)\ne 0$ and $(Q_p^3,q)$ is regular, in particular, $Q_p^3$ has an orthogonal base $\{\underline{v}_i\}$ such that 
$$
Q_p^3=\langle q(\underline{v}_1)\underline{e}_1,q(\underline{v}_2)\underline{e}_2,q(\underline{v}_3)\underline{e}_3\rangle\cong\langle\mathrm{diag}(q(\underline{v}_1),q(\underline{v}_2),q(\underline{v}_3))\rangle.
$$
We choose as the quadratic form the standard scalar product $q(x,y,z)=x^2+y^2+z^2$.\\

\subsubsection{Representation of $\mathrm{SO}_3(K)$}

Given a quadratic $K$-space $(\mathcal{V},\cdot)=(\mathcal{V},q)$, $q(\underline{v}):=\underline{v}\cdot\underline{v}$, one (can) define the \textit{Clifford algebra}\label{casertaclifford} $\mathrm{Cl}(\mathcal{V},q)$ as the tensor algebra $T(\mathcal{V})$ quotiented by the ideal generated by $\underline{v}\otimes\underline{v}-q(\underline{v})1$ or, equivalently, $\underline{v}\otimes\underline{w}+\underline{w}\otimes\underline{v}-\underline{w}\cdot\underline{v}1$. In particular we can identify the Clifford algebra with the free algebra on $\mathcal{V}$ with relations $\underline{v}\underline{w}+\underline{w}\underline{v}=2\underline{v}\cdot\underline{w}1$. \\

Since the Clifford algebra is the universal cover of the algebras such that $\mathcal{V}$ can be projected into preserving the quadratic form, if $\iota:\mathcal{V}\to\mathrm{Cl}(\mathcal{V},q)$ is the immersion, then there exists a natural inclusion $\mathrm{O}(\mathcal{V},q)\rightarrow\mathrm{Aut}(\mathrm{Cl}(\mathcal{V},q))$ such that 
$$
\iota(\sigma_{\underline{u}}(\underline{v}))=-\iota(\underline{u})\iota(\underline{v})\iota(\underline{u})^{-1}
$$
for any given reflexion $\sigma_{\underline{u}}$. Whence, for a rotation $\tau$, which is the product of two reflexions, there exists $P$ in the Clifford algebra such that $\iota(\tau_{\underline{u}}(\underline{v}))=PVP^{-1}$, with $\iota(\underline{v})=V$ and 
$$
\tau_{k\underline{u}}(\underline{v})=\underline{v}-2k\dfrac{\underline{v}\cdot\underline{u}}{k^2 Q(\underline{u})}k\underline{u}=\tau_{\underline{u}}(\underline{v}), \quad 
(kP)V(kP)^{-1}=PVP^{-1},\quad\forall k\in K^*.
$$

According to the classification theorems for Clifford algebras \footnote{Cfr. theorems 4, 4' in \cite{diarra1994padicclifford} page 99. for the $p$-adic case}, if $p\equiv 3$ mod $4$, then 
$\mathrm{Cl}(Q_p^3,Q)\cong\mathrm{M}_2(Q_p(i))$, and we recover the vector space $Q_p^3$ inside $\mathrm{M}_2(Q_p(i))$ thanks to the immersion 
\begin{equation}\label{casertavectorspace}
\iota:(a,b,c)\in Q_p^3\mapsto 
\begin{pmatrix}
c & a+ib\\
a-ib & -c
\end{pmatrix}
\end{equation}
with $\iota(\underline{v})^2=\underline{v}\cdot\underline{v}I=-\mathrm{det}V$ and 
$[\iota(\underline{v}),\iota(\underline{u})]=2\underline{u}\cdot\underline{v}I$, a vector $\underline{v}$ being isotropic if and only if $\mathrm{det}\iota(\underline{v})=0$. 
 
\begin{Proposition}\label{casertarotation-representation}
The group $\mathrm{SO}_3(Q_p(i))$ has a faithful representation in $\mathrm{PGL}_2(Q_p(i))\leq\mathrm{M}_2(Q_p(i))/Q_P^*I$ as the projective group 
$$
\Gamma:=
\left\{
\begin{bmatrix}
\alpha & \beta\\
-\overline{\beta} & \overline{\alpha}
\end{bmatrix}
:\alpha\overline{\alpha}+\beta\overline{\beta}\ne 0,\,\alpha,\beta\in Q_p(i)
\right\}, \quad \overline{a+ib}:=a-ib,\, a,b\in Q_p.
$$
\end{Proposition}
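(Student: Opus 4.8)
The plan is to realise the stated representation as the classical spin covering read through the isomorphism $\mathrm{Cl}(Q_p^3,q)\cong M_2(Q_p(i))$ of the preceding paragraph. First I would record what $\iota$ from (\ref{casertavectorspace}) does: it identifies $Q_p^3$ with the $Q_p$-subspace $\mathfrak{s}\subseteq M_2(Q_p(i))$ of trace-zero matrices $V$ with $\overline{V}^{\,t}=V$, and on $\mathfrak{s}$ one has $q(\underline v)=-\det\iota(\underline v)$, as already noted. Next I would combine the reflection formula $\iota(\sigma_{\underline u}(\underline v))=-\iota(\underline u)\iota(\underline v)\iota(\underline u)^{-1}$ with Cartan--Dieudonn\'e: every rotation $\tau$ is a product $\sigma_{\underline u_1}\sigma_{\underline u_2}$ of two reflections through non-isotropic hyperplanes, so with $P_\tau:=\iota(\underline u_1)\iota(\underline u_2)$ (the two minus signs cancelling) one gets $\iota(\tau(\underline v))=P_\tau\,\iota(\underline v)\,P_\tau^{-1}$ for all $\underline v$. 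Since $\mathrm{Cl}(Q_p^3,q)$ is generated over $Q_p$ by $Q_p^3$ and equals $M_2(Q_p(i))$, the centraliser of $\mathfrak{s}$ in $M_2(Q_p(i))^{\times}$ is the centre $Q_p(i)^{\times}I$; hence $\tau$ pins down $P_\tau$ up to a scalar, and $\rho\colon\tau\mapsto[P_\tau]\in\mathrm{PGL}_2(Q_p(i))$ is well defined, independent of the chosen factorisation of $\tau$.

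That $\rho$ is a homomorphism is immediate, since $\tau_1\tau_2$ acts on $\mathfrak{s}$ by conjugation with $P_{\tau_1}P_{\tau_2}$; and $\rho$ is injective, because $\rho(\tau)=[I]$ forces conjugation by $P_\tau$ to be trivial on $\mathfrak{s}\cong Q_p^3$, i.e. $\tau=\mathrm{id}$ (put differently, $\rho$ followed by the conjugation action of $\Gamma$ on $\mathfrak{s}$ recovers the defining faithful action of $\mathrm{SO}_3$ on $Q_p^3$). So the only real content is the identification of the image with $\Gamma$.

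For $\mathrm{im}\,\rho\subseteq\Gamma$ I would compute the even Clifford subalgebra $\mathrm{Cl}^0=\langle 1,\iota(\underline e_i)\iota(\underline e_j)\rangle_{Q_p}$ from (\ref{casertavectorspace}): a direct calculation shows that $I$ and the three products $\iota(\underline e_i)\iota(\underline e_j)$ span exactly the $Q_p$-algebra $B$ of matrices of the form displayed in the statement of $\Gamma$, a quaternion algebra over $Q_p$ with $B\cap Q_p(i)^{\times}I=Q_p^{\times}I$. As $P_\tau\in B$ and $\det P_\tau=\det\iota(\underline u_1)\det\iota(\underline u_2)=q(\underline u_1)q(\underline u_2)\neq0$, one gets $\rho(\tau)\in B^{\times}/Q_p^{\times}I=\Gamma$. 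For the reverse inclusion I would show that, under these identifications, $\rho$ is the standard isomorphism $B^{\times}/Q_p^{\times}\cong\mathrm{SO}(B^0,\mathrm{Nrd})$ by conjugation: the trace-zero part $B^0$ carries, via $\iota$, a form similar to $(Q_p^3,q)$; conjugation $c_g\colon x\mapsto gxg^{-1}$ by $g\in B^{\times}$ preserves trace and reduced norm, hence preserves $B^0$ and the form, so $c_g\in\mathrm{O}(B^0)$; it lies in $\mathrm{SO}(B^0)$ because $g\mapsto\det(c_g)$ is a fixed rational function with integer coefficients in the entries of $g$ that is identically $1$ on $\mathrm{GL}_2(\overline{Q_p})=(B\otimes\overline{Q_p})^{\times}$; and $B^{\times}\to\mathrm{SO}(B^0)$ is onto on $Q_p$-points, either directly by Cartan--Dieudonn\'e (each rotation is some $\sigma_{\underline u_1}\sigma_{\underline u_2}$, lifting to $\iota(\underline u_1)\iota(\underline u_2)\in B^{\times}$) or from the exact sequence $1\to\mathbb{G}_m\to\mathrm{GL}_1(B)\to\mathrm{PGL}_1(B)\cong\mathrm{SO}(B^0)\to1$ together with Hilbert~90. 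Chasing this through gives $\mathrm{im}\,\rho=\Gamma$.

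The main obstacle is precisely this last step: showing that conjugation by the full unit group of $\mathrm{Cl}^0$ lands in $\mathrm{SO}$ and exhausts $\Gamma$, rather than just the subgroup generated by the ``axis rotations'' $[\bigl(\begin{smallmatrix}\alpha&0\\0&\overline\alpha\end{smallmatrix}\bigr)]$ that are visibly in the image. If one prefers to stay elementary, the substitute is the lemma that every $g\in B^{\times}$ equals a scalar times a product $\iota(\underline u_1)\iota(\underline u_2)$ of two non-isotropic vectors of $Q_p^3$ --- this is where the isotropy of $(Q_p^3,q)$ over $Q_p$ (recorded above for $p\equiv3\bmod 4$) genuinely enters --- after which $[g]=\rho(\sigma_{\underline u_1}\sigma_{\underline u_2})$ and surjectivity onto $\Gamma$ is clear. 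The remaining verifications --- that $\Gamma$ is closed under products and inverses inside $\mathrm{PGL}_2(Q_p(i))$, and the ``self-adjointness'' bookkeeping for $\mathfrak{s}$ --- are routine $2\times2$ matrix algebra that I would dispatch in a couple of lines.
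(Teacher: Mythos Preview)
The paper does not actually prove this proposition in the text; the proof reads in its entirety ``See \cite{Caserta2017padicloop}''. So there is no in-paper argument to compare your attempt against.

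On its own merits your approach is the standard and correct one: identify the even Clifford algebra $\mathrm{Cl}^0(Q_p^3,q)$ with the $Q_p$-quaternion algebra $B=\{\left(\begin{smallmatrix}\alpha&\beta\\-\overline\beta&\overline\alpha\end{smallmatrix}\right)\}$, and realise $\rho$ as the conjugation action of $B^\times$ on $\mathfrak{s}=\iota(Q_p^3)$. Well-definedness, injectivity, and $\mathrm{im}\,\rho\subseteq\Gamma$ are handled cleanly. Two small points of precision in the surjectivity half. First, you slide between $\mathfrak{s}$ (the trace-zero \emph{Hermitian} matrices, $\overline V^{\,t}=V$) and $B^0$ (the trace-zero part of $B$, which is \emph{anti}-Hermitian); these are different three-dimensional $Q_p$-subspaces of $M_2(Q_p(i))$, and $\iota$ does \emph{not} land in $B^0$. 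Both happen to be stable under $B^\times$-conjugation and both carry forms isometric to $q$, so the argument survives, but it is tidier to stay with $\mathfrak{s}$ throughout: for $g\in B^\times$ one has $\overline g^{\,t}=(\det g)\,g^{-1}$ with $\det g=\alpha\overline\alpha+\beta\overline\beta\in Q_p^\times$, whence $g\mathfrak{s}g^{-1}\subseteq\mathfrak{s}$, and conjugation preserves $\det=-q$. Second, your Cartan--Dieudonn\'e clause (``each rotation is some $\sigma_{u_1}\sigma_{u_2}$, lifting to $\iota(u_1)\iota(u_2)$'') only reproves that $\rho$ is defined on all of $\mathrm{SO}_3$; it does not by itself show the image fills $\Gamma$. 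Your genuine surjectivity argument is the other one --- build the inverse $[g]\mapsto c_g|_{\mathfrak{s}}$ and check it lands in $\mathrm{SO}$ --- and that one is fine once phrased as: $g\mapsto\det(c_g|_{\mathfrak{s}})$ is a morphism of algebraic groups from the connected group $\mathrm{GL}_1(B)$ to $\mu_2$, hence trivial. The Hilbert~90 alternative you sketch is correct in spirit but close to circular as stated, since the isomorphism $\mathrm{PGL}_1(B)\cong\mathrm{SO}(B^0)$ on $Q_p$-points is essentially the assertion being proved.
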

\begin{proof}
See \cite{Caserta2017padicloop}
\end{proof}

\textbf{$p$-adic representation of the sphere}  
For a prime number $p$ and an integer $z$ we denote by $|z|_p$ the (normed) $p$-adic absolute value, by $\mathbb{Z}_p$ the completation of $\mathbb{Z}$ with respect to the absolute value and by $Q_p$ the field of quotients of $\mathbb{Z}_p$. Remember that we are assuming $p\equiv 3$ modulo $4$, therefore, $Q_p(i)$, $i^2=-1$, is the ``unique'' unramified extension over ${Q}_p$ of order two. We will furthermore consider the quadratic space on $Q_p^3$ with respect to the standard scalar product, which is isotropic\footnote{Cfr. theorem 3.5.1 page 60 and corollary 3.5.2. page 62 in \cite{kitaoka1999arithmetic}, the more general case only requires the condition $p>2$.} and not degenerate. We represent the ${Q}_p$-vector space ${Q}_p^3$ in its Clifford algebra as the vector space of Pauli matrices\footnote{The isomorphism between the quaternion and the Pauli matrices sis given by $ai+bj+ck\in\mathbb{H}\mapsto -i(a\sigma_1+b\sigma_2+c\sigma_3)\in\mathrm{Cl}(Q_p^3,\cdot)$}
$$
[a,b,c]:=a\sigma_x+b\sigma_y+c\sigma_z=\begin{pmatrix}
c & a-ib\\
a+ib& -c
\end{pmatrix},\quad a,b,c\in Q_p, 
$$
with $\sigma_x=[1,0,0]$, $\sigma_y=[0,1,0]$ and $\sigma_z=[0,0,1]$. The $2$-sphere $\{(a,b,c)\in Q_p^3: a^2+b^2+c^2=1\}$ is therefore an affine $2$-dimensional manifold of the matrices of order $2$, that is,
\begin{equation}\label{casertasphere}
S^2:=\{a\sigma_x+b\sigma_y+c\sigma_z\in M_2({Q}_p(i)): (a\sigma_x+b\sigma_y+c\sigma_z)^2=I\}.
\end{equation}
Define furthermore the cup $\tilde{S}^2$ around the ``north pole'' $\sigma_3$ by
\begin{equation}\label{casertacup}
\tilde{S}^2=\{P\in S^2:\|P-\sigma_z\|\leq p^{-1}\}
\end{equation}
where $\|\cdot\|$ is the (unique) max/sup-norm defined of the vector space $Q_p^3$. As the following Lemmata will show, this neighborhood of $\sigma_z$ is small enough, and actually the biggest, in order to allow us to parametrize $\tilde{S}^2$ by the trigonometric maps and give a ``polar representations'' which is analogue to the real case.
Of course, the $2$-sphere $S^2$ is also a $2$-dimensional analytic $p$-adic manifold and an analytic atlas is given by the ``stereographic projection'' $\{(\varphi_{\sigma_z}, U_{\sigma_z})$, $(\varphi_{-\sigma_z}, U_{-\sigma_z})\}$, where 
$$
U_{\sigma_z}=S^2\setminus\{\sigma_z\},\,
U_{-\sigma_z}=S^2\setminus\{-\sigma_z\},
$$
both the projections
$$
\varphi_{\sigma_z}:
\begin{pmatrix}
c & z\\
\overline{z} & -c
\end{pmatrix}\in U_{\sigma_z}\mapsto \dfrac{z}{1+c}\in Q_p(i),\,
\varphi_{-\sigma_z}:
\begin{pmatrix}
c & z\\
\overline{z} & -c
\end{pmatrix}\in U_{-\sigma_z}\mapsto \dfrac{z}{1-c}\in Q_p(i)
$$
being analytic, since the conjugation is $Q_p$-analytic as well as the rational functions.

\textbf{Reductive space} In the real case, the sequence $\mathrm{SO}_2(\mathbb{R})\to\mathrm{SO}_3(\mathbb{R})\to S^2$ is a principal bundle, we will make the equivalent $p$-adic ``metrical computation'' via Lie theory. The $\mathbb{Z}_p$-modules $\mathcal{V}\leq\mathrm{Lie}(\Gamma_{\sigma_z})$ and $\mathcal{H}\leq T_{\sigma_z}(S^2)$ defined by
$$
\mathcal{V}=\left\{
\begin{pmatrix}
ia & 0\\
0 & -ia
\end{pmatrix}\Big|\, a\in p\mathbb{Z}_p\right\}
,\quad \mathcal{H}=\left\{
\begin{pmatrix}
0 & \beta\\
-\overline{\beta} & 0
\end{pmatrix}\Big|\, \beta\in p\mathbb{Z}_p(i)\right\}
$$
play an analogue role of the horizontal and vertical distributions in $\sigma_z$, which allow in the real case to define an affine connection and the parallel transport. The loop of Kikkawa type on the $2$-sphere is an affine submanifold of $S^2$ which is orthogonal to $\mathcal{V}$ and has $\mathcal{H}$ as tangent plane in $\sigma_z$. Since the homogeneous space $\mathrm{SO}_3(Q_p)/\mathrm{SO}_2(Q_p)$ is still reductive and $\tilde{S}^2$ is still a transversal, the lift to $\mathrm{SO}_3(Q_p)$ of a curve $\gamma$ on $S^2$ through $\sigma_z$, $\gamma(t)=g_t.\sigma_z$, $g_t\in \mathrm{SO}_3(Q_p)$, $t\in I$, is $\gamma^*(t)=R_{\theta(t),\varphi(t)}.\sigma_z$. Borrowing the definition of parallel displacement induced by a connection, we take the curves $R_{\theta,\varphi(t)}.\sigma_z$ as geodesic curves and the rotation $R_{\theta,\varphi}$ as parallel displacement. Hence, the following.
 
\begin{Proposition}
The set $\exp(\mathcal{V})$ is a subgroup of $\Gamma^0_{\sigma_z}$, the connected component in $\Gamma_{\sigma_z}$ of $0$, and $L:=\exp(\mathcal{H})$ is a local section (a subset of representatives of left cosests) of $(\Gamma^0\cap\exp(\mathcal{V}))/\exp(\mathcal{V})$.
\end{Proposition}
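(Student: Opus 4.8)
The plan is to work directly with the $p$-adic exponential series $\exp(X)=\sum_{n\ge 0}X^{n}/n!$, exploiting that $p\equiv 3\bmod 4$ forces $p>2$, so $v_{p}(n!)<n/(p-1)\le n$. Since every entry of a matrix of $\mathcal{V}$ or $\mathcal{H}$ lies in $p\mathbb{Z}_{p}(i)$, and the square of an $\mathcal{H}$-matrix is $-tI$ with $t\in p^{2}\mathbb{Z}_{p}$, one gets $v_{p}(X^{n}/n!)>0$ with $v_{p}(X^{n}/n!)\to\infty$, so $\exp$ converges on both $\mathcal{V}$ and $\mathcal{H}$. I would also record at the outset that the nontrivial $Q_{p}$-automorphism $a+ib\mapsto a-ib$ of $Q_{p}(i)$ is continuous and commutes with $\exp$, hence $\overline{\exp(w)}=\exp(\bar w)$, and that $\exp$ restricts to a group isomorphism of $(p\mathbb{Z}_{p}(i),+)$ onto the principal units $(1+p\mathbb{Z}_{p}(i),\cdot)$.

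\emph{The subgroup $\exp(\mathcal{V})$.} The matrices of $\mathcal{V}$ are the commuting diagonal matrices $\mathrm{diag}(ia,-ia)$, $a\in p\mathbb{Z}_{p}$, and $\exp\,\mathrm{diag}(ia,-ia)=\mathrm{diag}\bigl(\exp(ia),\exp(ia)^{-1}\bigr)$ since $\overline{\exp(ia)}=\exp(-ia)$. As $\mathcal{V}\cong p\mathbb{Z}_{p}$ is an additive group and $\exp$ is additive on it, $\exp(Y_{1})\exp(Y_{2})=\exp(Y_{1}+Y_{2})$ and $\exp(Y)^{-1}=\exp(-Y)$, so $\exp(\mathcal{V})$ is a subgroup. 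Each of its elements is a diagonal matrix $\bigl(\begin{smallmatrix}\alpha&0\\0&\bar\alpha\end{smallmatrix}\bigr)$ with $\alpha=\exp(ia)$, hence commutes with $\sigma_{z}$ and fixes $\sigma_{z}$ under the conjugation action, i.e.\ lies in $\Gamma_{\sigma_{z}}$; being the continuous image of the compact group $p\mathbb{Z}_{p}$ it is a compact subgroup, so it lands in $\Gamma^{0}_{\sigma_{z}}$ (this last inclusion against the definition of that component).

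\emph{$L=\exp(\mathcal{H})$ as a local section.} I would compute $\exp$ on $\mathcal{H}$ in closed form: for $X=\bigl(\begin{smallmatrix}0&\beta\\-\bar\beta&0\end{smallmatrix}\bigr)$ with $t:=\beta\bar\beta\in p^{2}\mathbb{Z}_{p}$ one has $X^{2}=-tI$, so $\exp(X)=c\,I+s\,X$ where $c=\sum_{k}(-t)^{k}/(2k)!$ and $s=\sum_{k}(-t)^{k}/(2k+1)!$ are convergent principal units of $\mathbb{Z}_{p}$ and $c^{2}+s^{2}t=1$; in particular $\det\exp(X)=1$, and (working throughout with determinant-one representatives, which is legitimate since $\det\exp(X)=\det\exp(Y)=1$) $\exp(X)\in\Gamma$ in the shape $\alpha\bar\alpha+\delta\bar\delta=1$, while $\exp(X)\exp(Y)=\bigl(\begin{smallmatrix}c\,e^{ia}&s\beta e^{-ia}\\-s\bar\beta e^{ia}&c\,e^{-ia}\end{smallmatrix}\bigr)$ is again of that shape. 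For injectivity of $X\mapsto\exp(X)\exp(\mathcal{V})$, suppose $\exp(X_{1})\exp(Y_{1})=\exp(X_{2})\exp(Y_{2})$; comparing top-left entries and taking norms gives $c_{1}^{2}=c_{2}^{2}$, hence $c_{1}=c_{2}$ ($p>2$, both principal units), hence $e^{ia_{1}}=e^{ia_{2}}$ and $Y_{1}=Y_{2}$ by injectivity of $\exp$ on $p\mathbb{Z}_{p}(i)$; since $t\mapsto c=1-t/2+\cdots$ is a bijection of $p^{2}\mathbb{Z}_{p}$ onto its image, $c_{1}=c_{2}$ forces $t_{1}=t_{2}$, $s_{1}=s_{2}$, and the off-diagonal entries then give $\beta_{1}=\beta_{2}$, i.e.\ $X_{1}=X_{2}$. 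For local surjectivity I would invert explicitly: given $g=\bigl(\begin{smallmatrix}\alpha&\delta\\-\bar\delta&\bar\alpha\end{smallmatrix}\bigr)\in\Gamma$ with $\alpha\in 1+p\mathbb{Z}_{p}(i)$ and $\alpha\bar\alpha\in 1+p^{2}\mathbb{Z}_{p}$, set $c:=\sqrt{\alpha\bar\alpha}$ (the principal-unit root), recover $t\in p^{2}\mathbb{Z}_{p}$ from $c$ and then $s$; put $e^{ia}:=\alpha/c$, a principal unit of norm one, so $e^{ia}\in\exp(ip\mathbb{Z}_{p})$ — because $\exp(w)\overline{\exp(w)}=\exp(w+\bar w)=1$ forces $w+\bar w=0$, i.e.\ $w\in ip\mathbb{Z}_{p}$ — giving a unique $Y\in\mathcal{V}$; finally $\beta:=\delta e^{ia}/s\in p\mathbb{Z}_{p}(i)$ gives a unique $X\in\mathcal{H}$ with $\beta\bar\beta=t$ and $\exp(X)\exp(Y)=g$. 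Hence $X\in\mathcal{H}\mapsto\exp(X)\exp(\mathcal{V})$ is injective with image a neighbourhood of the trivial coset, which is precisely the statement that $L=\exp(\mathcal{H})$ is a local section for the left cosets of $\exp(\mathcal{V})$ — the $p$-adic counterpart of the fact that for a reductive $G/H$ the set $\exp\mathfrak{m}$ is locally a section of $G\to G/H$.

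The step I expect to be the main obstacle is the $p$-adic convergence and invertibility bookkeeping: one must verify that $c$, $s$, the compositional inverse of $t\mapsto c$, and the chosen branches of the square root and of $\exp^{-1}$ all exist as convergent series with unit linear term on the relevant integral disc — and it is precisely the hypothesis $\beta\in p\mathbb{Z}_{p}(i)$, which puts $t=\beta\bar\beta$ in $p^{2}\mathbb{Z}_{p}$, that tames the factorials $(2k)!$, $(2k+1)!$ in the denominators. Pinning down the symbol $\Gamma^{0}_{\sigma_{z}}$ (and the precise ambient group appearing in the quotient) and confirming $\exp(\mathcal{V})\subseteq\Gamma^{0}_{\sigma_{z}}$ is the remaining care; the rest is the $2\times2$ matrix computation above, and in particular no Baker--Campbell--Hausdorff machinery is required.
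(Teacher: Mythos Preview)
Your argument is sound, but there is nothing in the paper to compare it against: the paper does not prove this proposition at all and simply refers the reader to the external preprint \cite{Caserta2017padicloop}. Your explicit route via the closed forms $\exp(Y)=\mathrm{diag}(e^{ia},e^{-ia})$ and $\exp(X)=cI+sX$ with $X^{2}=-tI$, $c^{2}+s^{2}t=1$, followed by the unique-factorisation bookkeeping $g=\exp(X)\exp(Y)$, is the natural hands-on proof and almost certainly what the cited preprint contains; no Baker--Campbell--Hausdorff or abstract Lie machinery is needed, exactly as you say.

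The two caveats you raise are genuine features of the paper rather than of your proof. First, in a $p$-adic group the topological connected component of the identity is trivial, so the symbol $\Gamma^{0}_{\sigma_{z}}$ must stand for some canonical open subgroup (e.g.\ the principal congruence subgroup, or the image of $\exp$ on a lattice in the Lie algebra); your observation that $\exp(\mathcal{V})$ is the continuous image of the compact group $p\mathbb{Z}_{p}$ and consists of diagonal matrices $\mathrm{diag}(e^{ia},e^{-ia})$ with $e^{ia}\in 1+p\mathbb{Z}_{p}(i)$ places it in any such subgroup. Second, the displayed quotient ``$(\Gamma^{0}\cap\exp(\mathcal{V}))/\exp(\mathcal{V})$'' is visibly miswritten (it would be trivial as stated); your reading --- that $L=\exp(\mathcal{H})$ is a local transversal to the $\exp(\mathcal{V})$-cosets inside a neighbourhood of the identity in $\Gamma^{0}$ --- is the only sensible interpretation, and it is precisely what your injectivity/local-surjectivity computation establishes.
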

\begin{proof}
See \cite{Caserta2017padicloop}
\end{proof}

In particular, the group $\exp(\mathcal{V})$ is the group of elements 
$$
\begin{pmatrix}
\cos 2\theta & \exp(i\varphi)\sin 2\theta\\
\exp(-i\varphi)\sin 2\theta & -\cos 2\theta
\end{pmatrix}
$$ 
with $\varphi\in\mathbb{Z}_p$ and $\theta=\sqrt{z\overline{z}}$ where $z=\theta i\exp(i\varphi)\in p\mathbb{Z}_p(i)$ and it is also isomorphic to the projective group of elements
$$
\begin{bmatrix}
1 & \exp(i\varphi)\tan 2\theta\\
-\exp(-i\varphi)\tan 2\theta & 1
\end{bmatrix}
$$

\begin{Proposition}
The projection
\begin{equation}\label{casertastereographic}
\psi:
\begin{pmatrix}
c & a+ib\\
a-ib & -c
\end{pmatrix}\in \tilde{S}^2\mapsto \dfrac{a+ib}{1+c}\in Q_p(i),\,
\end{equation}
provides an isomorphism (diffeomorphism) between $D=\{z\in Q_p(i): |z|_p<1\}$ and $\tilde{S}^2$. In particular
\begin{equation}\label{casertasterographic2}
\psi\left(\begin{pmatrix}
\cos 2\theta & \exp(i\varphi)\sin 2\theta\\
\exp(-i\varphi)\sin 2\theta & -\cos 2\theta
\end{pmatrix}\right)=\exp(i\varphi)\tan\theta
\end{equation}
with
\begin{equation}\label{casertastima} 
|\exp(i\varphi)\tan\theta|_p=|\theta|_p<1.
\end{equation}
\end{Proposition}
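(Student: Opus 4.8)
The plan is to exhibit an explicit inverse of $\psi$, to check that $\psi$ and $\psi^{-1}$ are rational maps whose denominators do not vanish on their domains (hence are locally analytic), and then to read off $(\ref{casertasterographic2})$ and $(\ref{casertastima})$ from elementary manipulations of the $p$-adic power series for $\sin$, $\cos$ and $\exp$, all of which converge at the arguments that occur because $|\theta|_p\le p^{-1}<p^{-1/(p-1)}$ for $p>2$.

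\textbf{The inverse map.} Solving the stereographic relation $z=(a+ib)/(1+c)$ together with $a^2+b^2+c^2=1$ in the classical way produces the candidate inverse which, in the same matrix coordinates used to define $\psi$, reads
$$
\psi^{-1}(z)=\begin{pmatrix}\dfrac{1-z\overline{z}}{1+z\overline{z}} & \dfrac{2z}{1+z\overline{z}}\\ \dfrac{2\overline{z}}{1+z\overline{z}} & -\dfrac{1-z\overline{z}}{1+z\overline{z}}\end{pmatrix}.
$$
For $z\in D$ one has $|z\overline{z}|_p=|z|_p^2<1$, so $|1+z\overline{z}|_p=1$ and $\psi^{-1}(z)$ is well defined; a short computation shows $\psi^{-1}(z)\in S^2$. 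Since $p$ is odd, $|2|_p=1$, whence $\big|2z/(1+z\overline{z})\big|_p=|z|_p\le p^{-1}$ and $\big|(1-z\overline{z})/(1+z\overline{z})-1\big|_p=\big|2z\overline{z}/(1+z\overline{z})\big|_p=|z|_p^{2}\le p^{-1}$; as conjugation is an isometry of $Q_p(i)$, the real and imaginary parts of $2z/(1+z\overline{z})$ also have absolute value $\le|z|_p$, so $\|\psi^{-1}(z)-\sigma_z\|\le p^{-1}$, that is, $\psi^{-1}(z)\in\tilde S^2$. Dually, for $P\in\tilde S^2$ one has $|c-1|_p<1$, hence $|1+c|_p=|2+(c-1)|_p=1$ and $|\psi(P)|_p=|a+ib|_p\le p^{-1}$, so $\psi(P)\in D$. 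The identities $\psi\circ\psi^{-1}=\mathrm{id}_D$ and $\psi^{-1}\circ\psi=\mathrm{id}_{\tilde S^2}$ are then formal, and since both maps are composites of the $Q_p$-analytic conjugation with rational functions having non-vanishing denominators on their domains, both are locally analytic; thus $\psi$ is the asserted analytic isomorphism $\tilde S^2\to D$.

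\textbf{The polar formula and the estimate.} Substituting $c=\cos 2\theta$ and $a+ib=\exp(i\varphi)\sin 2\theta$ into $\psi$ gives $\exp(i\varphi)\,\sin 2\theta/(1+\cos 2\theta)$. For $|\theta|_p\le p^{-1}<p^{-1/(p-1)}$ (here $p>2$) the series for $\sin$ and $\cos$ converge and $\cos\theta$ is a $1$-unit, so $\tan\theta=\sin\theta/\cos\theta$ is defined; the duplication identities $\sin 2\theta=2\sin\theta\cos\theta$ and $1+\cos 2\theta=2\cos^{2}\theta$, valid on this ball as identities of convergent power series, give $\sin 2\theta/(1+\cos 2\theta)=\tan\theta$, which is $(\ref{casertasterographic2})$. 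Finally $|\exp(i\varphi)|_p=1$ (it is a point of the $p$-adic unit circle, $\exp(i\varphi)\exp(-i\varphi)=1$), and $|\tan\theta|_p=|\sin\theta|_p/|\cos\theta|_p=|\theta|_p$ because $\sin\theta=\theta\,(1-\theta^{2}/3!+\cdots)$ with the bracketed factor a $1$-unit for $|\theta|_p\le p^{-1}$; hence $|\exp(i\varphi)\tan\theta|_p=|\theta|_p=|z|_p<1$, which is $(\ref{casertastima})$.

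\textbf{Where the work lies.} The computations themselves are routine; the points requiring care are arithmetic. One uses that $p\equiv 3\bmod 4$ makes $Q_p(i)/Q_p$ unramified, so that $|\cdot|_p$ has value group $p^{\mathbb Z}$ (whence $|z|_p<1\iff|z|_p\le p^{-1}$) and $|2|_p=1$, and that conjugation is an isometry of $Q_p(i)$. One must also keep every trigonometric identity strictly inside the common convergence radius $p^{-1/(p-1)}$ of the series, which is precisely why the cup $\tilde S^2$ is taken of radius $p^{-1}$: that is the largest admissible radius in the discrete value group that still lies below $p^{-1/(p-1)}$ for all $p>2$.
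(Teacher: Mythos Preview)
Your proof is correct. The paper itself does not prove this proposition in the body; it simply writes ``See \cite{Caserta2017padicloop}'', deferring to an external preprint, so there is no in-paper argument to compare against. Your approach---exhibiting the explicit rational inverse, verifying via the ultrametric inequality that both $\psi$ and $\psi^{-1}$ land in the right sets (using that $Q_p(i)/Q_p$ is unramified so the value group is $p^{\mathbb Z}$ and $|2|_p=1$), and then reducing the polar formula to the double-angle identities for the convergent $p$-adic series---is the natural one and is fully rigorous. One small remark: your parenthetical justification ``$\exp(i\varphi)\exp(-i\varphi)=1$'' alone does not force $|\exp(i\varphi)|_p=1$; you should either invoke Proposition~\ref{casertaexp-sin-cos} directly, or note that $\exp(i\varphi)\in 1+p\mathbb Z_p(i)$, or combine the product relation with the fact that conjugation is an isometry and $\overline{\exp(i\varphi)}=\exp(-i\varphi)$.
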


\begin{proof}
See \cite{Caserta2017padicloop}
\end{proof}

\subsubsection{Loop of Kikkawa type}

Following Kikkawa, for $A,B\in\tilde{S}^2$, $A=R_{\varphi,\theta}.\sigma_z$, we define the operation 
$$
A\oplus B:=\left(\exp_B\circ\tau_{N,B}\circ\exp_N^{-1}\right)(A)
$$ 
where $\exp_P$ is the exponential map which maps a tangent vector $X$ in $P$ onto $\gamma_P(1)$, $\gamma$ being the integral curve in $P$ of $X$ and $\tau_{N,B}$ being the parallel displacement along the geodesic which joins $N,B$. We can entirely project the cup $\tilde{S}^2$ diffeomorphically onto the disk $D=\{\xi\in Q_p(i):|\xi|_p<1\}$, thus, as in the real case, the operation becomes
$$
\xi_1\oplus\xi_2=\dfrac{\xi_1+\xi_2}{1-\overline{\xi}_1\xi_2}, \quad \xi_1,\xi_2\in D,
$$
and simple computation show that $(D,0,\oplus)$ is an $A_l$-loop (and not only a local loop) with the left inverse property. Indeed, if we represent the points projectively, we have
$$
\lambda_{\xi_1}(\xi_2)=
\begin{bmatrix}
\xi_1\\
1
\end{bmatrix}
\oplus
\begin{bmatrix}
\xi_2\\
1
\end{bmatrix}
=
\begin{bmatrix}
1 & \xi_1\\
-\bar{\xi}_1 & 1
\end{bmatrix}
\begin{bmatrix}
\xi_2\\
1
\end{bmatrix}
$$
with deviations
$$
\delta_{\xi_1,\,\xi_2}=\lambda_{\xi_1\oplus\xi_2}^{-1}\lambda_{\xi_1}\lambda_{\xi_2}=
\begin{bmatrix}
1-\xi_1\bar{\xi}_2 & 0 \\
0 & 1-\bar{\xi}_1\xi_2
\end{bmatrix}
$$
which act on $D$ by
$$
\delta_{\xi_1,\, \xi_2}(\xi)=\dfrac{1-\xi_1\bar{\xi}_2}{ 1-\bar{\xi}_1\xi_2}\xi,\quad \overline{\dfrac{1-\xi_1\bar{\xi}_2}{ 1-\bar{\xi}_1\xi_2}}=\left(\dfrac{1-\xi_1\bar{\xi}_2}{ 1-\bar{\xi}_1\xi_2}\right)^{-1},\quad \left|\dfrac{1-\xi_1\bar{\xi}_2}{ 1-\bar{\xi}_1\xi_2}\right|_p=1.
$$

\section{Appendix}

\appendix

\subsection{$p$-adic algebraic and transcendental functions}

Let $p$ a prime number, for all $z\in\mathbb{Z}^*$, let $v_p(z)$ be the greatest power of $p$ which divides $z$. Define the \textit{(normed) $p$-adic absolute value} of $z$ by setting $|z|:=p^{-v_p(z)}$. The completation $Z_p$ of $\mathbb{Z}$ with respect to the previous absolute value is the \textit{ring of $p$-adic integers} and the field of quotients $Q_p$ of $Z_p$ is the \textit{$p$-adic field}, which is a local field with characteristic $0$ and a totally disconnected metric space with respect to the distance induced by the absolute value.
We define the \textit{square root} of a $p$-adic number by means of the $p$-adic binomial series, which is defined, for $\alpha\in Z_p$, by
$$
\sum_{n=0}^\infty\binom{\alpha}{n}x^n
$$
and converges for $|x|<1$ (cfr. \cite{gouvea1997p} page 123-124). The square roots of $-1$ are denoted as usual by $\pm i$ where $-1$ is the opposite of $1$.

We give now account of the classical transcendental $p$-adic maps, for references see \cite{koblitzpadic1984} page 81, \cite{robert2000course} page 241, \cite{schikhof2007ultrametric} page  128., \cite{gouvea1997p} page  112. The definition of these maps are via power series. Since
$$
v_p(n!)=\sum_{i=0}^\infty \left\lfloor\dfrac{n}{p^i}\right\rfloor<\dfrac{n}{p-1}
$$
in particular $|n!|>p^{-n/(p-1)}$ (cfr. \cite{gouvea1997p} page  114), by setting $r=p^{-1/(p-1)}$, $1/p<r$ and  $D=B_r(0):=\{x\in Z_p:|x|<r\}$ as convergence ray and disk respectively the following propositions \ref{casertaexp},\ref{casertaexp-prop}, \ref{casertaexp-sin-cos} and \ref{casertaarctg} hold\footnote{Cfr. also \cite{gouvea1997p} page 112-120. For the proofs of \ref{casertaexp},\ref{casertaexp-prop}, \ref{casertaexp-sin-cos} and \ref{casertaarctg} see  page  70-71, page  128, page  135-136 and page  137. in \cite{schikhof2007ultrametric} respectively.}.

\begin{Proposition}\label{casertaexp}
The series
$$
\exp_p x :=\sum_{n=0}^\infty \dfrac{x^n}{n!},\quad \log_p(1+ x) :=\sum_{n=1}^\infty (-1)^{n+1}\dfrac{x^n}{n} 
$$
both converge in $D$ and for $|x|<1$ respectively. Moreover
$$
\sin x=\sum_{n=0}^\infty (-1)^n\dfrac{x^{2n+1}}{(2n+1)!},\quad
\cos x=\sum_{n=0}^\infty (-1)^n\dfrac{x^{2n}}{(2n)!}
$$
also converge in $D$.
\end{Proposition}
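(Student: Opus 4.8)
The plan is to reduce everything to the ultrametric convergence criterion: in the complete non-Archimedean field $Q_p$ a series $\sum_n c_n$ converges if and only if $c_n\to 0$, so for a power series $\sum_n a_n x^n$ it suffices to estimate $|a_n|\,|x|^n$ and to show that it tends to $0$. The only nontrivial input is the bound on $v_p(n!)$ already recorded above, namely $v_p(n!)<n/(p-1)$, which rewrites as $|n!|>p^{-n/(p-1)}=r^n$ with $r=p^{-1/(p-1)}$.

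First I would treat $\exp_p$. For $x\in D$ one has $|x|<r$, hence
$$
\left|\frac{x^n}{n!}\right|=\frac{|x|^n}{|n!|}<\left(\frac{|x|}{r}\right)^n\longrightarrow 0,
$$
since $|x|/r<1$; therefore $\exp_p x$ converges on $D$. The series for $\sin$ and $\cos$ are handled by exactly the same estimate, because their nonzero coefficients are, up to sign, among the $1/k!$: for $x\in D$ one gets $\bigl|(-1)^n x^{2n+1}/(2n+1)!\bigr|<(|x|/r)^{2n+1}\to 0$ and $\bigl|(-1)^n x^{2n}/(2n)!\bigr|<(|x|/r)^{2n}\to 0$, giving convergence on $D$.

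Next I would treat $\log_p(1+x)$. Here $|a_n|=|1/n|=p^{v_p(n)}$, and since $p^{v_p(n)}$ divides $n$ it follows that $p^{v_p(n)}\le n$, so
$$
\left|(-1)^{n+1}\frac{x^n}{n}\right|=p^{v_p(n)}|x|^n\le n\,|x|^n .
$$
For $|x|<1$ the real sequence $n\,t^n$ with $t=|x|\in[0,1)$ tends to $0$; hence the terms tend to $0$ and the series converges for $|x|<1$.

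The computations are all routine and there is no genuine obstacle; the one point deserving care is that $D$ is the \emph{open} ball $\{|x|<r\}$, since it is precisely the strict inequality $|x|<r$ that forces $|x|/r<1$ and hence $(|x|/r)^n\to 0$. On the boundary $|x|=r$ this argument — and in fact the convergence of $\exp_p$ itself — breaks down, which is the reason the radius $r=p^{-1/(p-1)}$ is optimal.
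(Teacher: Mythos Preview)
Your argument is correct and is precisely the standard one: the paper does not supply its own proof but simply refers to Schikhof (pp.~70--71) and Gouv\^{e}a (pp.~112--120), where the same ultrametric term-test together with the estimate $|n!|>r^n$ is used. There is nothing to add; your remark on the optimality of the open ball $D=\{|x|<r\}$ is also the standard observation made in those references.
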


\begin{Proposition}\label{casertaexp-prop}
The following hold
\begin{itemize}
\item $\exp(x+y)=\exp x\, \exp y$, $x,y\in D$
\item $(\exp x)'=\exp x$
\item $\log(xy)=\log x+\log y$, $|x|,|y|<1$
\item $\log\exp x=x$, $\exp\log y=y$, $x\in E$, $y\in 1+E$ 
\end{itemize}
\end{Proposition}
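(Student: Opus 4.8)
The plan is to derive all four assertions by term-by-term manipulation of the defining power series inside their common disc of convergence $B_{r}(0)$, $r=p^{-1/(p-1)}$. The only analytic input I will use repeatedly is that in a non-Archimedean field a family is summable exactly when its terms tend to $0$, and that summable families may be reordered and regrouped at will; the estimate recorded before Proposition~\ref{casertaexp}, $v_{p}(n!)=(n-S_{n})/(p-1)<n/(p-1)$, is what makes the terms small, since it yields $|n!|\ge r^{\,n-1}$ and $|n|\ge r^{\,n-1}$ for $n\ge 1$.

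First I would prove the homomorphism property and the derivative formula for $\exp$. For $x,y\in B_{r}(0)$ the family $\bigl(x^{a}y^{b}/(a!\,b!)\bigr)_{a,b\ge 0}$ has terms of absolute value $<(|x|/r)^{a}(|y|/r)^{b}$, which tends to $0$, so it is summable; regrouping along diagonals $n=a+b$ and using $\sum_{a+b=n}\binom{n}{a}x^{a}y^{b}=(x+y)^{n}$ gives $\exp x\cdot\exp y=\sum_{n\ge 0}(x+y)^{n}/n!=\exp(x+y)$, which is defined since $|x+y|\le\max(|x|,|y|)<r$. For the derivative I differentiate term by term inside the disc of convergence: $(\exp x)'=\sum_{n\ge 1}nx^{n-1}/n!=\sum_{n\ge 1}x^{n-1}/(n-1)!=\exp x$, the derived series having the same radius because $|n/n!|=|1/(n-1)!|$. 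The same rule gives $\bigl(\log(1+t)\bigr)'=1/(1+t)$ for $|t|<1$, which I will need below.

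Next I would record the size estimates and deduce the inverse relations. For $x\in B_{r}(0)$ and $n\ge 2$, $|x^{n}/n!|\le r(|x|/r)^{n}<|x|$, so the linear term strictly dominates in $\exp x-1=\sum_{n\ge 1}x^{n}/n!$ and $|\exp x-1|=|x|$; the identical bound, using $|n|\ge r^{\,n-1}$, gives $|\log(1+u)|=|u|$ for $|u|<r$. Hence $\exp$ maps $B_{r}(0)$ into $1+B_{r}(0)$ and $\log$ maps $1+B_{r}(0)$ into $B_{r}(0)$, which is precisely why the fourth assertion has to be stated on $E=B_{r}(0)$. Now $\log\circ\exp$ is analytic on $B_{r}(0)$ — the inner series $\exp x-1$ has no constant term and absolute value $<1$, so its substitution into $\log(1+\,\cdot\,)$ is legitimate — and by the chain rule with $(\exp x)'=\exp x$ its derivative equals $\frac{1}{\exp x}\exp x=1$, while its value at $0$ is $\log 1=0$; a power series with derivative $1$ and constant term $0$ equals $x$, so $\log\exp x=x$. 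For the other direction, $h(x):=\exp\log(1+x)$ is analytic on $B_{r}(0)$ (since $|\log(1+x)|=|x|<r$) and the chain rule gives $(1+x)h'(x)=h(x)$ with $h(0)=1$; comparing coefficients in this relation forces $h(x)=1+x$.

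Finally I would prove multiplicativity of $\log$. In $\mathbb{Q}[[X,Y]]$ one has the formal identity $\log\bigl((1+X)(1+Y)\bigr)=\log(1+X)+\log(1+Y)$, since the formal derivative $\partial/\partial X$ of each side is $1/(1+X)$ and the two sides agree at $X=0$. For $x,y$ with $|x-1|,|y-1|<1$ I specialize it: the expansion of $\log\bigl(1+(x-1)+(y-1)+(x-1)(y-1)\bigr)$ into monomials in $x-1$ and $y-1$ has degree-$d$ part of absolute value $O(d^{2}\mu^{d})$, $\mu=\max(|x-1|,|y-1|)<1$, so the family is summable, regrouping is valid, and the formal identity passes to the identity of values. (On the smaller disc $1+B_{r}(0)$ it is immediate from the inverse relations, since $\exp(\log x+\log y)=\exp\log x\cdot\exp\log y=xy$ and then $\log\exp$ may be applied to both sides, all quantities lying in $B_{r}(0)$.) The step I expect to be the main obstacle is exactly this domain bookkeeping for the inverse relations: one has to check that $|\log(1+x)|=|x|$ holds on $B_{r}(0)$, and that it fails in general on the full disc $|x|<1$, so that $\exp\log(1+x)$ converges precisely on $B_{r}(0)$ — this is where the strict inequality $|x|<r$, not merely $|x|<1$, is used.
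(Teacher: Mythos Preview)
Your argument is correct. The paper does not prove this proposition itself but simply refers the reader to Schikhof, \emph{Ultrametric Calculus}, p.~128 (see the footnote preceding Proposition~\ref{casertaexp}); the arguments there run along essentially the same lines you give --- the multiplicativity of $\exp$ by regrouping the summable family $x^{a}y^{b}/(a!\,b!)$, termwise differentiation, and the inverse relations once the isometry estimates $|\exp x-1|=|x|$ and $|\log(1+u)|=|u|$ on $B_{r}(0)$ are in hand. Your explicit attention to the domain bookkeeping in the fourth item, identifying $E$ with $B_{r}(0)$ and noting that the estimate $|\log(1+x)|=|x|$ fails on the full disc $|x|<1$, is precisely the delicate point, and you have handled it correctly.
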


\begin{Proposition}\label{casertaexp-sin-cos}
For all $x,y\in D$ the following hold
\begin{itemize}
\item $\exp ix=\cos x +i\sin y$
\item $(\sin x)^2+(\cos y)^2=1$
\item $\sin(x+y)=\sin x\cos y+\cos x\sin y$
\item $\cos(x+y)=\cos x\cos y-\sin x\sin y$
\item $|\sin x|=|x|$, $|\cos x|=|\exp ix|=1$
\item $|\sin x-x|<|x|$, $|\cos x-1|<|x|$ ($x\ne 0$)
\item $|\sin x-\sin y|=|x-y|$, $|\cos x-\cos y|\leq |x-y|$
\end{itemize}
Furthermore the cosinus has no zeros and the unique zero of the sinus is $0$ (\footnote{Cfr. exercise 25.1 in  \cite{schikhof2007ultrametric} page 72.}).
\end{Proposition}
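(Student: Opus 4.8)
The plan is to derive the whole proposition from the single identity $\exp(ix)=\cos x+i\sin x$, the functional equation of $\exp$, and one elementary estimate on $v_p(n!)$. First I would observe that the series for $\exp$ converges on the ball $\{z\in Q_p(i):|z|<r\}$ of the quadratic extension $Q_p(i)$, on which the absolute value extends uniquely with $|i|=1$, by exactly the bound already used in Proposition~\ref{casertaexp}; in particular $\exp(ix)$ is defined for $x\in D$ because $|ix|=|x|<r$. Splitting $\exp(ix)=\sum_n(ix)^n/n!$ according to the parity of $n$ and using $i^{2k}=(-1)^k$, $i^{2k+1}=i(-1)^k$ — a regrouping that is legitimate since in a complete ultrametric field a series converges as soon as its terms tend to $0$ — yields $\exp(ix)=\cos x+i\sin x$. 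Now $\cos x,\sin x\in Q_p$ for $x\in Z_p$ while $\{1,i\}$ is a $Q_p$-basis of $Q_p(i)$, so applying the functional equation of $\exp$ (Proposition~\ref{casertaexp-prop}, which holds on the whole convergence ball inside $Q_p(i)$ by the usual Cauchy-product argument) to $\exp(i(x+y))=\exp(ix)\exp(iy)$ and comparing the coefficients of $1$ and of $i$ gives the addition formulas for $\sin$ and $\cos$; the substitution $y=-x$, together with the evenness of $\cos$ and oddness of $\sin$ visible from the power series, then gives $\sin^2x+\cos^2x=1$.

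The analytic core is the estimate that $|x^n/n!|<|x|$ for every $x\in D$ and every integer $n\ge 2$. It rests on the identity $v_p(n!)=(n-s_p(n))/(p-1)$, where $s_p(n)\ge 1$ is the base-$p$ digit sum of $n$: this gives $|n!|\ge p^{-(n-1)/(p-1)}=r^{n-1}$, hence $|x^n/n!|=|x|^n/|n!|\le(|x|/r)^{n-1}|x|<|x|$ since $|x|<r$. Applying it termwise to $\sin x-x=\sum_{n\ge1}(-1)^nx^{2n+1}/(2n+1)!$ and $\cos x-1=\sum_{n\ge1}(-1)^nx^{2n}/(2n)!$, all of whose exponents are $\ge2$, and using that the supremum of an ultrametric null sequence is attained, gives $|\sin x-x|<|x|$ and $|\cos x-1|<|x|$ for $x\ne0$. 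The ultrametric isosceles-triangle principle then upgrades these to $|\sin x|=|x|$ and $|\cos x|=1$, and $|\exp(ix)|=|\cos x+i\sin x|=\max(1,|x|)=1$. The statements on zeros follow immediately: $\sin x=0$ iff $|x|=0$ iff $x=0$, and $|\cos x|=1$ forces $\cos x\ne0$ throughout $D$.

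For the Lipschitz bounds I would pass to the sum-to-product identities obtained from the addition formulas. Since $p$ is odd, $\tfrac12\in Z_p$, so $\tfrac{x\pm y}{2}\in D$ whenever $x,y\in D$, and one obtains $\sin x-\sin y=2\cos\tfrac{x+y}{2}\sin\tfrac{x-y}{2}$ and $\cos x-\cos y=-2\sin\tfrac{x+y}{2}\sin\tfrac{x-y}{2}$. Taking absolute values and using $|2|=1$, $|\cos|=1$ and $|\sin z|=|z|$ gives $|\sin x-\sin y|=|x-y|$ and $|\cos x-\cos y|=|x+y|\,|x-y|\le|x-y|$.

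The power-series bookkeeping is routine; the one point that needs genuine care is the strict inequality $|x^n/n!|<|x|$ for $n\ge2$, i.e. pinning the $p$-adic valuation of $n!$ down sharply enough — via the digit-sum identity — that every higher-order tail of the $\sin$ and $\cos$ series lies strictly below its leading term. That single estimate is precisely what makes all of the norm identities, and hence the location of the zeros, work.
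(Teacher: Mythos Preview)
The paper does not supply its own proof of this proposition; it records the statements and, in the footnote immediately preceding them, defers to Schikhof's \emph{Ultrametric Calculus} (pages 135--136). Your argument is correct and is essentially the standard route one finds there: establish Euler's identity by regrouping the exponential series, read off the addition formulas and the Pythagorean identity from the functional equation of $\exp$ by comparing $Q_p$- and $iQ_p$-components, and control all tails via the digit-sum identity $v_p(n!)=(n-s_p(n))/(p-1)$, which gives the sharp bound $|n!|\ge r^{\,n-1}$ you need for the strict inequalities.

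One minor remark on scope. Your derivation of the Lipschitz estimates through the sum-to-product formulas relies on $|2|=1$, i.e.\ on $p$ being odd, so that $\tfrac{x\pm y}{2}\in D$. This is harmless in the paper's setting ($p\equiv 3\bmod 4$), but the proposition as stated does not restrict $p$. A variant that covers $p=2$ as well, and stays closer to Schikhof, is to estimate directly from the series: for each $m\ge 2$ one has $|x^m-y^m|\le |x-y|\max(|x|,|y|)^{m-1}$, and combining this with the same inequality $|m!|\ge r^{\,m-1}$ that you already proved gives $|x^m-y^m|/|m!|<|x-y|$, hence $|\sin x-\sin y|=|x-y|$ and $|\cos x-\cos y|\le |x-y|$ by the isosceles-triangle principle without any halving.
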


\begin{Proposition}[van Hamme]\label{casertaarctg}
For all $x\in C_p$, $x\ne \pm i$
$$
\arctan x:=\dfrac{1}{2i}\log\left(\dfrac{1+ix}{1-ix}\right)
$$
and for all $x\in C_p$, $|x|<1$
$$
\arcsin x:=\dfrac{1}{i}\log\left(ix+\sqrt{1-x^2}\right).
$$ 
\end{Proposition}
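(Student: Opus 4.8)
The statement asserts two things at once: that the closed-form right-hand sides make sense on the indicated domains, and that they deserve the names $\arctan$ and $\arcsin$ because on the disk $|x|_p<1$ they coincide with the classical power series
$$
\sum_{n\ge 0}(-1)^n\frac{x^{2n+1}}{2n+1},\qquad \sum_{n\ge 0}\binom{2n}{n}\frac{x^{2n+1}}{4^n(2n+1)}.
$$
So the plan is to establish well-definedness first and then the series identity. The engine for the second part is the fact recalled in Section~1 (Remark~6.2 of \cite{schneider2011p}): a locally analytic function on a disk whose differential vanishes identically is locally constant, and — once it is presented as a single convergent power series — constant. I would apply this to the difference between each closed form and the corresponding classical series, after checking that the derivatives agree and that both vanish at $x=0$.

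\textbf{The arctangent.} Put $f(x)=\tfrac1{2i}\bigl(\log(1+ix)-\log(1-ix)\bigr)$. For $x\ne\pm i$ the element $(1+ix)/(1-ix)$ is a well-defined nonzero element of $C_p$, so after extending the logarithm to $C_p^{*}$ (the Iwasawa logarithm) the formula makes sense; for $|x|_p<1$ one has $|ix|_p<1$, so by Proposition~\ref{casertaexp} both summands are convergent power series in $x$ and $f$ is analytic on that disk. Using $(\log(1+u))'=(1+u)^{-1}$ from Proposition~\ref{casertaexp-prop} and the chain rule,
$$
f'(x)=\frac1{2i}\Bigl(\frac{i}{1+ix}+\frac{i}{1-ix}\Bigr)=\frac1{1-(ix)^2}=\frac1{1+x^2}.
$$
The classical series $A(x)=\sum_{n\ge 0}(-1)^n x^{2n+1}/(2n+1)$ converges for $|x|_p<1$ with $A'(x)=\sum_{n\ge0}(-1)^n x^{2n}=(1+x^2)^{-1}$, and $A(0)=0=f(0)$ since $\log 1=0$. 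Hence $f-A$ is a power series on $|x|_p<1$ with vanishing derivative, so it is constant and equals $0$; this proves the identity on the disk, and the right-hand side is precisely the extension to $x\ne\pm i$. (For the arctangent one may alternatively expand $\log(1\pm ix)$ termwise and observe that the even powers cancel, recovering $A(x)$ directly.)

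\textbf{The arcsine.} Set $h(x)=ix+\sqrt{1-x^2}$, where $\sqrt{1-x^2}=(1-x^2)^{1/2}$ is the value of the binomial series of the Appendix; this converges exactly for $|x^2|_p<1$, i.e.\ $|x|_p<1$, and since $p$ is odd one has $1/2\in\mathbb Z_p$, so $\sqrt{1-x^2}\equiv 1$ modulo the maximal ideal, whence $|\sqrt{1-x^2}|_p=1$ and $|h(x)|_p=1\ne 0$. Thus $s(x):=\tfrac1i\log h(x)$ is defined and analytic on $|x|_p<1$. The key identity is $h'(x)=i-x/\sqrt{1-x^2}$, which equals $i\,h(x)/\sqrt{1-x^2}$ because $i\,h(x)=-x+i\sqrt{1-x^2}$; therefore
$$
s'(x)=\frac1i\,\frac{h'(x)}{h(x)}=\frac1{\sqrt{1-x^2}}=\sum_{n\ge 0}\binom{2n}{n}\frac{x^{2n}}{4^n}.
$$
The classical arcsine series has exactly this derivative and vanishes at $0$, as does $s$ (again $\log 1=0$), so the same vanishing-derivative argument on the disk gives the identity.

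\textbf{Main obstacle.} The derivative computations are routine; the substantive points are two. First, one must be precise about the branch of the $p$-adic logarithm: the arctangent formula is claimed for all $x\ne\pm i$, including $|x|_p\ge1$, which forces the use of the Iwasawa extension $\log_p\colon C_p^{*}\to C_p$, and one has to check that this extension restricts on $|x|_p<1$ to the power series $\log(1+u)$, since that is where the derivative argument lives. Second, the upgrade from ``locally constant'' to ``constant'' is legitimate only because, on $|x|_p<1$, the functions in play are genuine convergent power series (because $|ix|_p<1$ and $|x^2|_p<1$), so a vanishing derivative annihilates every non-constant coefficient; on a merely totally disconnected domain this step would fail. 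The convergence of the square-root series on exactly $|x|_p<1$ and the non-vanishing $h(x)\ne 0$ are the remaining bookkeeping, which I would record rather than expand.
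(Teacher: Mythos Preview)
The paper does not give its own proof of this proposition; it is stated as a result of van~Hamme, and the surrounding footnote refers the reader to \cite{schikhof2007ultrametric}, p.~137, for the argument. Your derivative-matching proof on the disk $|x|_p<1$ (and the appeal to the Iwasawa logarithm for the extension of $\arctan$ to $x\ne\pm i$) is correct and is essentially the standard route.

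One small point of bookkeeping in your arcsine paragraph: to apply the power-series logarithm of Proposition~\ref{casertaexp} to $h(x)=ix+\sqrt{1-x^2}$ you need $|h(x)-1|_p<1$, not merely $|h(x)|_p=1$. Your own observation that $\sqrt{1-x^2}\equiv 1$ modulo the maximal ideal, together with $|ix|_p<1$, gives exactly $|h(x)-1|_p\le\max(|x|_p,|\sqrt{1-x^2}-1|_p)<1$; just state that conclusion before invoking $\log$.
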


\nocite{*}
\bibliographystyle{amsplain}

\begin{thebibliography}{100}

\bibitem{abramenko2008buildings}
Peter Abramenko and Kenneth~Stephen Brown, \emph{Buildings: theory and
  applications}, Springer, 2008.

\bibitem{amann2003groupoftrees}
Olivier~Eric Amann, \emph{Groups of tree-automorphisms and their unitary
  representations}, Ph.D. thesis, Swiss Federal Institute of Technology Zurich,
  2003.

\bibitem{artin1991algebra}
Michael Artin, \emph{Algebra}, Prentice Hall, 1991.

\bibitem{aschbacher1984onthemaximal}
M.~Aschbacher, \emph{{On the maximal subgroups of the finite classical
  groups}}, Inventiones mathematicae \textbf{76} (1984), no.~3, 469--514.

\bibitem{aschbacher2000finitegrouptheory}
\bysame, \emph{{Finite group theory}}, Cambridge Univ Pr, 2000.

\bibitem{aschbacher2005onbolloops}
\bysame, \emph{{On Bol loops of exponent 2}}, Journal of Algebra \textbf{288}
  (2005), no.~1, 99--136.

\bibitem{projectiveplanesAschbacher2006}
\bysame, \emph{{Projective planes, loops, and groups}}, Journal of Algebra
  \textbf{300} (2006), no.~2, 396--432.

\bibitem{aschbacher2006finitebruckloops}
M.~Aschbacher, M.K.~Kinyon, and J.D.~Phillips, \emph{{Finite Bruck loops}},
  Transactions of the American Mathematical Society \textbf{358} (2006), no.~7,
  3061.

\bibitem{aschbacher1985maximalsubgroup}
M.~Aschbacher and L.~Scott, \emph{{Maximal subgroups of finite groups}}, J.
  Algebra \textbf{92} (1985), no.~1, 44--80.

\bibitem{baer1939nets}
R.~Baer, \emph{Nets and groups}, Transactions of the American
  Mathematical Society \textbf{46} (1939), no.~1, 110--141.

\bibitem{bass1990uniform}
H.~Bass and R.~Kulkarni, \emph{Uniform tree lattices}, Journal of the American
  Mathematical Society (1990), 843--902.

\bibitem{bass2001tree}
H.~Bass and A.~Lubotzky, \emph{Tree lattices}, Springer, 2001.

\bibitem{baumeisterself}
B.~Baumeister and A.~Stein, \emph{{Self-invariant 1-factorizations of complete
  graphs and finite Bol loops of exponent 2}}, Beitr{\"a}ge zur Algebra und
  Geometrie \textbf{51} (2010), no.~1, 117--135.

\bibitem{borovik1996maximalsubgroups}
A.V.~Borovik, L.~Pyber, and A.~Shalev, \emph{{Maximal subgroups in finite and
  profinite groups}}, Transactions of the American Mathematical Society
  \textbf{348} (1996), no.~9, 3745--3761.

\bibitem{tac}
C.~Broto, R.~Levi, and B.~Oliver, \emph{{The homotopy theory of fusion
  systems}}, Journal of American Mathematical Society \textbf{16} (2003),
  no.~4, 779--856.

\bibitem{brown1989buildings}
K.S.~Brown, \emph{Buildings}, unkown, 1989.

\bibitem{bruck1966asurvey}
RH.~Bruck, \emph{A survey of binary sistems}, Springer Verlag, Berlin, 1966.

\bibitem{bruck1956loopswhose}
RH.~Bruck and L.J.~Paige, \emph{{Loops whose inner mappings are
  automorphisms}}, Annals of Mathematics (1956), 308--323.

\bibitem{caserta2014kikkawa}
R.~Caserta, \emph{Kikkawa loop on the $p$-adic $2$-sphere}, ArXiV.

\bibitem{caserta2014ontheeveloping}
R.~Caserta, \emph{On the enveloping group of a finite left-loop},
  ArXiV.

\bibitem{chein1990quasigroupsand}
O.~Chein, H.O.~Pflugfelder, J.D.H.~Smith, and H.~Hahl, \emph{{Quasigroups and
  loops: Theory and applications}}, Heldermann, 1990.

\bibitem{diagana2007non}
T.~Diagana, \emph{Non-archimedean linear operators and applications}, Nova
  Publishers, 2007.

\bibitem{diarra1994padicclifford}
B.~Diarra, \emph{p-adic clifford algebras}, Annales mathématiques Blaise
  Pascal \textbf{1} (1994), 85--103.

\bibitem{diestel2003graph}
R.~Diestel and D.~K{\"u}hn, \emph{Graph-theoretical versus topological ends of
  graphs}, Journal of Combinatorial Theory, Series B \textbf{87} (2003), no.~1,
  197--206.

\bibitem{drapal1993multiplication}
A.~Dr{\'a}pal and T.~Kepka, \emph{Multiplication groups of
  quasigroups and loops i.}, Acta Universitatis Carolinae. Mathematica et
  Physica \textbf{34} (1993), no.~1, 85--99.

\bibitem{drapal1994multiplication}
A.~Dr{\'a}pal, T.~Kepka, and P.~Mar{\v{s}}{\'a}lek,
  \emph{Multiplication groups of quasigroups and loops ii.}, Acta Universitatis
  Carolinae. Mathematica et Physica \textbf{35} (1994), no.~1, 9--29.

\bibitem{figa1991harmonic}
A.~Fig{\`a}-Talamanca and C.~Nebbia, \emph{Harmonic analysis and
  representation theory for groups acting on homogenous trees}, vol. 162,
  Cambridge University Press, 1991.

\bibitem{foguel2000groupstransversal}
T.~Foguel, \emph{{Groups transversals, and loops}}, Commentationes Mathematicae
  Universitatis Carolinae \textbf{41} (2000), no.~2, 261--270.

\bibitem{friedman1994gyrosemidirect}
Y.~Friedman and A.~Ungar, \emph{Gyrosemidirect product structure of
  bounded symmetric domains}, Results in Mathematics \textbf{26} (1994),
  no.~1-2, 28--38.

\bibitem{garrett1997buildings}
P.B.~Garrett, \emph{Buildings and classical groups}, CRC Press, 1997.

\bibitem{gaschutz1955zueinemvon}
W~Gasch\"utz, \emph{{Zu einem von B.H. und H. Neumann gestellten Problem}},
  Math. Nachr. \textbf{14} (1955), 249--52.

\bibitem{geoghegan2007topological}
R.~Geoghegan, \emph{Topological methods in group theory}, vol. 243, Springer,
  2007.

\bibitem{gouvea1997p}
F.Q.~Gouv{\^e}a, \emph{p-adic numbers}, Springer, 1997.

\bibitem{grove2002classical}
L.C.~Grove, \emph{Classical groups and geometric algebra}, vol.~39, American
  Mathematical Soc., 2002.

\bibitem{halberstadt1976oncertainmaximal}
E.~Halberstadt, \emph{{On certain maximal subgroups of symmetric or alternating
  groups}}, Mathematische Zeitschrift \textbf{151} (1976), no.~2, 117--125.

\bibitem{halin1973automorphisms}
R.~Halin, \emph{Automorphisms and endomorphisms of infinite locally finite
  graphs}, Abhandlungen aus dem Mathematischen Seminar der Universit{\"a}t
  Hamburg, vol.~39, Springer, 1973, pp.~251--283.

\bibitem{helgason1979differential}
S.~Helgason, \emph{Differential geometry, lie groups, and symmetric spaces},
  vol.~80, Academic press, 1979.

\bibitem{etfg}
G.~Higman, B.H.~Neumann, and H.~Neuman, \emph{{Embedding theorems for groups}},
  Journal of the London Mathematical Society \textbf{1} (1949), no.~4, 247.

\bibitem{ji2006buildings}
Lizhen Ji et~al., \emph{Buildings and their applications in geometry and
  topology}, Asian J. Math \textbf{10} (2006), no.~1, 11--80.

\bibitem{udf}
H.~Karzel, \emph{{Unendliche Dicksonsche Fastk{\"o}rper}}, Archiv der
  Mathematik \textbf{16} (1965), no.~1, 247--256.

\bibitem{karzel1993groups}
H.~Karzel and H.~Wefelscheid, \emph{Groups with an involutory
  antiautomorphism and k-loops; application to space-time-world and hyperbolic
  geometry i}, Results in Mathematics \textbf{23} (1993), no.~3-4, 338--354.

\bibitem{kepka1993loops}
T.~Kepka and M.~Niemenmaa, \emph{On loops with cyclic inner
  mapping groups}, Archiv der Mathematik \textbf{60} (1993), no.~3, 233--236.

\bibitem{kepka1997connected}
T.~Kepka and J.D.~Phillips, \emph{Connected transversals to
  subnormal subgroups}, Commentationes Mathematicae Universitatis Carolinae
  \textbf{38} (1997), no.~2, 223--230.

\bibitem{kiechle2002theoryofk}
H.~Kiechle, \emph{Theory of $k$-loops}, vol. 1778, Springer, 2002.

\bibitem{kiechle2002ontheextension}
H.~Kiechle and G.P.~Nagy, \emph{{On the extension of involutorial Bol loops}},
  Abhandlungen aus dem Mathematischen Seminar der Universit{\"a}t Hamburg,
  vol.~72, Springer, 2002, pp.~235--250.

\bibitem{oteoibl}
\bysame, \emph{{On the extension of involutorial Bol loops}}, Abhandlungen aus
  dem Mathematischen Seminar der Universit{\\"a}t Hamburg, vol.~72, Springer,
  2002, pp.~235--250.

\bibitem{kikkawa1975geometryofhomogeneous}
M.~Kikkawa, \emph{{Geometry of homogeneous Lie loops}}, Hiroshima Math. J
  \textbf{5} (1975), no.~2, 141--179.

\bibitem{gohll}
\bysame, \emph{{Geometry of homogeneous Lie loops}}, Hiroshima Mathematical
  Journal \textbf{5} (1975), no.~2, 141--179.

\bibitem{kikkawa2004kikkawa}
\bysame, \emph{Kikkawa loops and homogeneous loops}, Commentationes
  Mathematicae Universitatis Carolinae \textbf{45} (2004), no.~2, 279--286.

\bibitem{kikkawa1964local}
M.~Kikkawa et~al., \emph{On local loops in affine manifolds}, Journal of
  Science of the Hiroshima University, Series AI (Mathematics) \textbf{28}
  (1964), no.~2, 199--207.

\bibitem{kinyon2000loopsandsemidirect}
M.K. Kinyon and O.~Jones, \emph{{Loops and semidirect products}},
  Communications in Algebra \textbf{28} (2000), no.~9, 4137--4164.

\bibitem{lasp}
\bysame, \emph{{Loops and semidirect products}}, Communications in Algebra
  \textbf{28} (2000), no.~9, 4137--4164.

\bibitem{kitaoka1999arithmetic}
Y.~Kitaoka, \emph{Arithmetic of quadratic forms}, vol. 106, Cambridge
  University Press, 1999.

\bibitem{kleidman1988asurveyofmaximal}
P.B.~Kleidman and M.W.~Liebeck, \emph{{A survey of the maximal subgroups of the
  finite simple groups}}, Geometriae Dedicata \textbf{25} (1988), no.~1,
  375--389.

\bibitem{kline1990mathematical}
M.~Kline, \emph{Mathematical thought from ancient to modern times}, vol.~3,
  Oxford University Press, 1990.

\bibitem{kobayashi1963foundations}
S.~Kobayashi and K.~Nomizu, \emph{Foundations of differential geometry}, New York, 1963.

\bibitem{koblitzpadic1984}
N.~Koblitz, \emph{p-adic numbers, p-adic analysis, and zeta-functions},
  Springer.

\bibitem{kowalski1980generalized}
O.~Kowalski, \emph{Generalized symmetric spaces}, Geometry and topology of
  manifolds (1980), 25.

\bibitem{kreuzer1998inner}
Alexander Kreuzer, \emph{Inner mappings of bruck loops}, Mathematical
  Proceedings of the Cambridge Philosophical Society, vol. 123, Cambridge Univ
  Press, 1998, pp.~53--57.

\bibitem{kreuzer1994k}
A.~Kreuzer and H.~Wefelscheid, \emph{On k-loops of finite order},
  Results in Mathematics \textbf{25} (1994), no.~1-2, 79--102.

\bibitem{kron2008metric}
B.~Kr{\"o}n and R{\"o}gnvaldur~G. M{\"o}ller, \emph{Metric ends, fibers and
  automorphisms of graphs}, Mathematische Nachrichten \textbf{281} (2008),
  no.~1, 62--74.

\bibitem{kron2009ends}
B.~Kr{\"o}n and E.~Teufl, \emph{Ends--group-theoretical and topological
  aspects}, unkown, 2009.

\bibitem{kunen2000thestructure}
K.~Kunen, \emph{{The structure of conjugacy closed loops}}, Transactions of the
  American Mathematical Society (2000), 2889--2911.

\bibitem{lal1996transversalsin}
R.~Lal, \emph{{Transversals in groups}}, Journal of Algebra \textbf{181}
  (1996), no.~1, 70--81.

\bibitem{lucchini2001onthenumber}
A.~Lucchini, \emph{{On the number of generators of finite images of free
  products of finite groups}}, Journal of Algebra \textbf{245} (2001), no.~2,
  552--561.

\bibitem{otnogofiofpfg}
\bysame, \emph{{On the Number of Generators of Finite Images of Free Products
  of Finite Groups* 1}}, Journal of Algebra \textbf{245} (2001), no.~2,
  552--561.

\bibitem{lundell1969topology}
A.T.~Lundell and S.~Weingram, \emph{The topology of cw complexes},
  Springer, 1969.

\bibitem{lyndon2001combinatorial}
R.C.~Lyndon and P.E.~Schupp, \emph{Combinatorial group theory}, vol.~89,
  Springer, 2001.

\bibitem{micali1992clifford}
A.~Micali, R.~Boudet, and J.~Helmstetter, \emph{Clifford algebras
  and their applications in mathematical physics}, vol.~47, Springer, 1992.

\bibitem{morris2001introduction}
D.W.~Morris, \emph{Introduction to arithmetic groups}, arXiv preprint
  math/0106063 (2001), 

\bibitem{strambach2002loopsingroup}
P.T.~Nagy and K.~Strambach, \emph{Loops in group theory and lie theory}, de
  Gruyter, Berlin, 2002.

\bibitem{nagy1994loops}
P.T.~Nagy and K.~Strambach, \emph{Loops as invariant sections in
  groups, and their geometry}, Canadian Journal of Mathematics \textbf{46}
  (1994), no.~5, 1027--1056.

\bibitem{lgtlt}
\bysame, \emph{{Loops in group theory and Lie theory}}, de
  Gruyter, 2002.

\bibitem{nesterov2000non}
A.I.~Nesterov and L.V.~Sabinin, \emph{Non-associative geometry and
  discrete structure of spacetime}, Comment. Math. Univ. Carolin \textbf{41}
  (2000), no.~2, 347--357.

\bibitem{Nguyen2003onlooopsandevelopes}
T.~Nguyen, \emph{{On Loops and Envelopes}}, preprint (2003).

\bibitem{niemenmaa1990onmultiplication}
M.~Niemenmaa and T.~Kepka, \emph{{On multiplication groups of loops}}, Journal
  of Algebra \textbf{135} (1990), no.~1, 112--122.

\bibitem{niemenmaa1995loops}
M.~Niemenmaa, \emph{On loops which have dihedral 2-groups as inner mapping
  groups}, Bulletin of the Australian Mathematical Society \textbf{52} (1995),
  no.~01, 153--160.

\bibitem{niemenmaa1997connected}
\bysame, \emph{On connected transversals to subgroups whose order is a product
  of two primes}, European Journal of Combinatorics \textbf{18} (1997), no.~8,
  915--919.

\bibitem{niemenmaa1990multiplication}
M.~Niemenmaa and T.~Kepka, \emph{On multiplication groups of
  loops}, Journal of Algebra \textbf{135} (1990), no.~1, 112--122.

\bibitem{niemenmaa1994connected}
M.~Niemenmaa and T.~Kepka, \emph{On connected transversals to abelian
  subgroups}, Bulletin of the Australian Mathematical Society \textbf{49}
  (1994), no.~01, 121--128.

\bibitem{pflugfelder2000historical}
H.O.~Pflugfelder, \emph{Historical notes on loop theory}, Commentationes
  Mathematicae Universitatis Carolinae \textbf{41} (2000), no.~2, 359--370.

\bibitem{phillips1999quasiprimitivity}
J.D.~Phillips and J.D.H.~Smith, \emph{Quasiprimitivity and quasigroups}, Bulletin of
  the Australian Mathematical Society \textbf{59} (1999), no.~03, 473--475.

\bibitem{ffsatlc}
L.~Puig, \emph{{Full Frobenius systems and their localizing categories}},
  preprint (2001).

\bibitem{robert2000course}
A.~Robert, \emph{A course in p-adic analysis}, vol. 198, Springer, 2000.

\bibitem{sabin1972ontheequivalence}
L.V.~Sabinin, \emph{{On the equivalence of Categories of Loops and Homogeneous
  Spaces}}, Soviet Math. Dokl. \textbf{13} (1972a), 970--74.

\bibitem{sabinin2000smooth}
L.V.~Sabinin, \emph{Smooth quasigroups and loops forty-five years of
  incredible growth}, Commentationes Mathematicae Universitatis Carolinae
  \textbf{41} (2000), no.~2, 377--400.

\bibitem{oteocolahs}
L.V.~Sabinin, \emph{{On the equivalence of categories of loops and homogeneous
  spaces}}, Soviet Math. Dokl, vol.~13, 1972, pp.~970--974.

\bibitem{schikhof2007ultrametric}
W.H.~Schikhof, \emph{Ultrametric calculus: an introduction to
  p-adic analysis}, vol.~4, Cambridge University Press, 2007.

\bibitem{schneider2011p}
P.~Schneider, \emph{p-adic lie groups}, vol. 344, Springer, 2011.

\bibitem{cgt}
P.E. Schupp and R.C. Lyndon, \emph{{Combinatorial group theory}},
  Springer-Verlag, 1977.

\bibitem{tmigthgt}
P.~Scott and T.~Wall, \emph{{Topological methods in group theory}}, Homological
  group theory (Proc. Sympos., Durham, 1977), vol.~36, 1977, pp.~137--203.

\bibitem{serre1965lie}
J.P.~Serre, \emph{Lie algebras and lie groups}, vol. 1762, Springer, 1965.

\bibitem{serre1980trees}
\bysame, \emph{Trees}, Springer, 1980, translated from the French by J.
  Stillwell.

\bibitem{serre1979local}
J.P.~Serre, \emph{Local fields}, french translatio by marvin j.
  greenberg, ed., vol.~67, Springer New York, 1979.

\bibitem{sharpe1997differential}
R.W.~Sharpe, \emph{Differential geometry: Cartan's generalization of
  klein's erlangen program}, vol. 166, Springer, 1997.

\bibitem{smith2003permutationrepresentation}
J.D.H.~Smith, \emph{{Permutation representations of loops}}, Journal of Algebra
  \textbf{264} (2003), no.~2, 342--357.


\bibitem{tits1979reductive}
J.~Tits, \emph{Reductive groups over local fields}, Automorphic forms,
  representations and L-functions (Proc. Sympos. Pure Math., Oregon State
  Univ., Corvallis, Ore., 1977), Part, vol.~1, 1979, pp.~29--69.

\bibitem{tits1975buildings}
J.~Tits, \emph{On buildings and their applications}, Proc. Intern. Congr.
  Math.(Vancouver 1974), Canad. Math. Congr., Montreal, vol.~1, 1975,
  pp.~209--220.

\bibitem{wag}
A.A.~Ungar, \emph{{Weakly associative groups}}, Results Math \textbf{17}
  (1990), 149--168.

\bibitem{ungar1994holomorphic}
A.A.~Ungar, \emph{The holomorphic automorphism group of the complex disk},
  aequationes mathematicae \textbf{47} (1994), no.~2-3, 240--254.

\bibitem{tdf}
H.~Wahling, \emph{{Theorie der Fastk{\"o}rper}}, Thales Verlag, 1987.

\bibitem{Caserta2017padicloop}
R.~Caserta, \emph{{p-adic Kikkawa loop of Lie type}}, preprint, 2017.

\end{thebibliography}

\end{document}